\definecolor{darkgreen}{rgb}{0.0, 0.7, 0.0}
\definecolor{cyan}{cmyk}{1,0,0,0}
\newcommand{\bdg}{\begin{dg}}
\newcommand{\edg}{\end{dg}}
\newtheorem{tm}{Theorem}[section]
\newtheorem{lm}[tm]{Lemma}
\newtheorem{pr}[tm]{Proposition}
\newtheorem{rmk}[tm]{Remark}
\newtheorem{fact}[tm]{Fact}
\newtheorem{??}[tm]{Question}
\newcommand{\ben}{\begin{enumerate}}
\newcommand{\een}{\end{enumerate}}
\newcommand{\bit}{\begin{itemize}}
\newcommand{\eit}{\end{itemize}}
\newcommand{\beq}{\begin{equation}}
\newcommand{\eeq}{\end{equation}}
\newcommand{\la}{\label}
\newcommand{\n}{\noindent}
\newcommand\ci{\cite}
\font\tenmsb=msbm10
\font\sevenmsb=msbm7
\font\fivemsb=msbm5
\def\Bbb#1{{\fam\msbfam #1}}
\font\teneufm=eufm10
\font\seveneufm=eufm7
\font\fiveeufm=eufm5
\def\frak#1{{\fam\eufmfam\relax#1}}
\newcommand{\im}{ \hbox{\rm Im} }
\newcommand\oql{\overline{\Bbb Q}_\ell}
\newcommand\comp{{\Bbb C}}
\newcommand\zed{{\Bbb Z}}
\newcommand\s{\sigma}
\newcommand{\w}[1]{\widetilde{#1}}
\newcommand{\ov}[1]{\overline{#1}}
\newcommand{\m}[1]{\mathcal{#1}}
\newcommand{\ms}[1]{\mathscr{#1}}
\newcommand{\ptd}[1]{ \,^{\frak p}\!\tau_{ \leq {#1} } }
\newcommand{\pcs}{ \,^{\frak p}\!{\mathcal H}   }
\newcommand{\field}{k} 
\newcommand{\Gm}{\mathbb G_m}
\newcommand{\FC}{C^{(1)}} 
\newcommand{\ra}{\rightarrow} 
\newcommand{\xra}{\xrightarrow}
\newcommand{\ah}{\alpha}
\newcommand{\gam}{\gamma}
\newcommand{\Aa}{\mathbb{A}}
\newcommand{\brg}{J}
\newcommand{\base}{B}
\newcommand{\csub}{C/\base}
\newcommand{\csubf}{C^{(\base)}/\base}
\begin{document}

\title{A cohomological Non Abelian Hodge Theorem \\ in positive characteristic}
\author{Mark Andrea  de Cataldo}
\email{mark.decataldo@stonybrook.edu}
\address{Mathematics Department\\ Stony Brook University\\ Stony Brook NY, 11794-3651, USA}
\author{Siqing Zhang}
\email{siqing.zhang@stonybrook.edu}
\address{Mathematics Department\\ Stony Brook University\\ Stony Brook NY, 11794-3651, USA}

\classification{14D20}
\keywords{Higgs Bundles, Perverse Filtrations}

\begin{abstract}
We start with a curve over an algebraically closed  ground field of positive characteristic $p>0$. 
By using specialization in cohomology techniques,  under suitable natural coprimality conditions, we prove a cohomological Simpson Correspondence between the moduli space of Higgs bundles and the one of  connections
on the  curve. 
We also prove a new $p$-multiplicative periodicity concerning the cohomology rings of Dolbeault moduli spaces of  degrees differing by a factor of $p$.
By coupling this $p$-periodicity in characteristic $p$ with lifting/specialization techniques in mixed characteristic,
we  find, in arbitrary characteristic,  cohomology ring isomorphisms between the cohomology rings of Dolbeault moduli spaces for  different degrees coprime to the rank.
It is interesting that this last  result is  proved as follows: we  prove a weaker version in positive characteristic; we  lift  and strengthen the weaker version  to the result in characteristic zero; 
 finally, we specialize the  result to positive characteristic.
 The moduli spaces we work with admit certain natural morphisms (Hitchin, de Rham-Hitchin, Hodge-Hitchin), and all the cohomology ring isomorphisms we find are filtered isomorphisms for the resulting perverse Leray filtrations.
 \end{abstract}

\maketitle

\tableofcontents

\section{Introduction}\la{intro}$\;$

Let $C$ be a connected  projective nonsingular curve over the complex numbers. 
The Non Abelian Hodge Theorem (a.k.a. the Simpson Correspondence) (\ci{si mo I, si mo II})   establishes that three
rather different moduli spaces are canonically homeomorphic to each other: the de Rham moduli space $M_{dR}$ of rank $r$  connections on $C$; the Dolbeault  moduli space $M_{Dol}$  of rank $r$
and degree zero Higgs bundles on $C$;  
the Betti moduli space $M_B$  of representations of the fundamental group
of $C$ into $GL(r, \comp)$. 
There is also the Hodge moduli space $M_{Hod}$ of $t$-connections (\ci{si naf}) that in some sense subsumes $M_{Dol}$ and $M_{dR}$.
  For the variant concerning  nonsingular moduli for bundles of  (non zero) degree coprime to the rank,  see \ci{ha-th-PLMS}.
For a brief summary concerning the Hodge, Dolbeault and de Rham moduli spaces, see \S\ref{rino}.

In this paper, we also work over an algebraically closed ground field of positive characteristic, where,
 even though many beautiful results are  available, the situation is less clear.
 Since there seems to be no Betti picture that fits well with a possible Simpson Correspondence,
 in this paper,
 by Simpson Correspondence in characteristic $p>0$, we  mean some kind of relation between Higgs bundles (Dolbeault picture) and connections (de Rham picture).

\ci[\S4]{og-vo} establishes, among other things,   a   Simpson  Correspondence  between the stack of Higgs bundles with  nilpotent Higgs field for the Frobenius twist $C^{(1)}$ of the curve $C$,   
and the stack of  connections on the curve $C$ with nilpotent $p$-curvature tensor.
\ci[Thm.\;3.29,\;Lm.\;3.46]{gr-2016} proves that there is a pair of morphisms  $M_{Dol} (\FC) \to A(\FC) \leftarrow M_{dR}(C)$ which  are \'etale locally equivalent over the 
Hitchin base $A(\FC)$  (\S\ref{rino}), both for the coarse moduli spaces, as well as for the stacks.   \ci[Thm. 1.2]{ch-zh} proves an  analogous result at the stack level, for arbitrary reductive groups in place of the general linear group.
The reader can also consult \ci{la-sh-zu} for generalizations of the isomorphism in \ci{og-vo} to the study of Higgs-de Rham flows for schemes in positive and mixed characteristic.
One  recovers the aforementioned nilpotent Simpson Correspondence in 
characteristic $p>0$ in \ci{og-vo}, by taking  the fibers of the pair of morphisms  over the origin in $A(\FC)$. More generally, we get 
a kind of Simpson Correspondence:  for every closed  point in $A(\FC)$, the two fibers
of the morphisms $M_{Dol}(\FC) \to A(\FC) \leftarrow M_{dR}(C)$ are non-canonically isomorphic varieties, and thus have isomorphic \'etale cohomology rings. Note that these results relate Higgs bundles
of degree $d$  on $\FC$ to connections of degree $dp$ on $C$.

None of these results seems to imply  a global statement concerning (the cohomology) of the Dolbeault and  of the de Rham moduli spaces.
In short,  it seems that we are still  missing a (cohomological)  global Simpson Correspondence  in positive characteristic.

In this paper, we prove such  a new cohomological Simpson Correspondence result for curves over an algebraically closed field of  positive characteristic $p>0$,  as well as a series of new allied results in arbitrary characteristics.
The methods we use center on the use of vanishing cycles and of the specialization morphism in equal and in mixed characteristic.
In order to use these techniques, we need to establish the smoothness of certain morphisms and the properness of certain other morphisms.
Once this is done, we still need to come to terms with the fact that the specialization morphisms may fail 
to be defined, because the moduli spaces we work with are not proper over the ground field. While this issue is circumvented in the proofs of the results in \S\ref{the mainz}, it is not
in the proofs of the results in \S\ref{omq2}, where we use the compactification results of \ci{de-zh B}, and their application to specialization morphisms.  

Let us describe the main results of this paper. 
First of all, all the cohomology rings we deal with carry natural filtrations, called perverse Leray filtration, associated with the various
morphisms  --Hitchin, de Rham-Hitchin, Hodge-Hitchin (\S\ref{rino})-- exiting these moduli space. In what follows we omit these filtrations from the
notation. 

Let $C/\field$ be a nonsingular connected projective curve over an algebraically closed field
of characteristic $p\geq 0.$ Let $\ell$ be  a prime, invertible in the ground field. Since the rank is fixed in what follows, we drop it from the notation.

{\bf Theorem  \ref{zmain} (Cohomological Simpson Correspondence $char(\field)=p>0$, I)  and its refinement Theorem \ref{via pb}
(Cohomological Simpson Correspondence $char(\field)=p>0$, II).} 
Let $p>0.$ We work under  natural assumptions on  the rank $r$ and  degree $d$  of the vector bundles
involved,  and on the characteristic $p$:   namely, $d=\overline{d}p$ is a multiple of the characteristic, and    
the g.c.d. $(r,d)=1$. Note that then $(r,p)=1.$ The first condition
is to have non-empty  de Rham space/stack; the second one   is to have nonsingular moduli spaces.
 Then we prove that there is a canonical  filtered isomorphism   between the corresponding \'etale cohomology rings
\beq\la{eq mt}
H^* \left(M_{Dol}(C; d), \oql\right) \simeq H^*\left(M_{dR}(C;d), \oql\right).
\eeq
Unlike \ci{og-vo, ch-zh, gr-2016},    (\ref{eq mt}) relates the \'etale cohomology rings  of the Dolbeault and de Rham moduli spaces,  for the \underline{same} curve $C$ and the \underline{same} degree. 
While the  Frobenius twist $\FC$ does not appear in the statement of (\ref{eq mt}), it  plays a key role in the proof.

 {\bf Theorem \ref{central fiber} (The cohomology ring of $N_{dR}$).} 
 Let $p > 0$ and assume the same conditions on $r$ and $d$ seen above: $d=\overline{d}p$ and $(r,d)=1$.
  We  use (\ref{to ex}) from the proof of Theorem \ref{zmain}, to 
  prove  that there is a canonical  filtered isomorphism of cohomology rings
\beq\la{mdrndr}
H^* \left(M_{dR}(C;d), \oql\right) \simeq H^*\left(N_{dR}(C;d), \oql\right), 
\eeq
where $N_{dR}$ is the   subspace of stable connection with nilpotent $p$-curvature, i.e.
 the fiber over the origin of the de Rham-Hitchin morphism 
$h_{dR}: M_{dR} \to A(\FC)$ (\S\ref{rino}). The corresponding fact for $M_{Dol}$ and the fiber $N_{Dol}$ is well-known and valid without any assumptions on rank and degree, and it can be proved by using the theory of weights  jointly with the classical   contracting $\Gm$-action on the $\Gm$-equivariant and proper Hitchin morphism  $h_{Dol}: M_{Dol} \to A(C)$. The surprising aspect of (\ref{mdrndr}) is that there is no known $\Gm$-action on $M_{dR}$.

{\bf Theorem \ref{int cons} ($p$-Multiplicative periodicity with Frobenius twists).}  
Let $p > 0$ and assume the same conditions on $r$ and $d$ seen above: $d=\overline{d}p$ and $(r,d)=1$. This theorem expresses 
a new periodicity feature concerning the cohomology rings of Dolbeault moduli spaces for degrees
that differ by a multiple a  power of the characteristic $p>0,$ namely,  there is a canonical  filtered isomorphism of cohomology rings 
\beq\la{p-per}
H^*\left(M_{Dol}(C; \overline{d}), \oql\right) \simeq H^*\left(M_{Dol}(C^{(-m)}; \overline{d}p^m), \oql\right), 
\eeq
where $m\geq 0$, and $C^{(-m)}$ is  the $(-m)$-th Frobenius twist of $C$, i.e. the base change of $C/\field$ via the $m$-th power $fr_{\field}^{-m}: \field
\stackrel{\sim}\to \field$, $a \mapsto a^{p^{-m}}$, of the inverse of the absolute Frobenius automorphism $fr_\field$.

{\bf Theorem \ref{vvooqq} (Different curves, same degree).}  
Let $p\geq 0$ and let  $(r,d)=1$. We do not assume that the degree is a multiple of $p$. We prove that the cohomology rings of the Dolbeault moduli spaces of two curves $C_i$ of the same genus  are non canonically  filtered-isomorphic
\beq\la{fr4}
H^*(M_{Dol} (C_1 ; d)) \simeq H^*(M_{Dol} (C_2 ; d)).
\eeq
 Over the complex numbers:
the statement without the filtrations is an easy consequence of the fact that the two Dolbeault moduli spaces are diffeomorphic to the (common) Betti moduli space; the filtered statement is proved in \ci{dCM}.

{\bf Theorem \ref{e4} ($p$-Multiplicative periodicity without Frobenius twists).} 
Let $p > 0$ and assume the same conditions on $r$ and $d$ seen above: $d=\overline{d}p$ and $(r,d)=1$. 
We prove  a non canonical analogue of (\ref{p-per}), with the Frobenius twist $C^{(-m)}$ replaced by the original curve $C$ (or, in fact,  by any curve
of the same genus, in view of Theorem \ref{vvooqq})
\beq\la{p-per-2}
H^*\left(M_{Dol}(C; \overline{d}), \oql\right) \simeq H^*\left(M_{Dol}(C; \overline{d}p^m), \oql\right).
\eeq

{\bf Theorem \ref{c cop} (Same curve, different degrees; $char (\field) =0$).} 
Here, $p=0$. Let $d,d'$ be degrees coprime to the rank $r.$
We prove that the cohomology rings of the Dolbeault moduli spaces in degrees $d,d'$
for a curve $C$  are filtered  isomorphic
\beq\la{to7}
H^*(M_{Dol}(C,d), \oql) \simeq H^*(M_{Dol}(C,d'), \oql).
\eeq
Over the complex numbers,
the statement without the filtrations is a consequence of the fact that the two Dolbeault moduli spaces are diffeomorphic to their Betti  counterparts and that, in turn, these are Galois-conjugate.
The resulting ``transcendental" isomorphism differs from the isomorphism  in Theorem \ref{c cop}.
Presently,  it is not known how to compare the perverse Leray filtrations
under the ``transcendental" isomorphism.  

Added in revision. 1) This comparison is the subject of \ci{dmsz}: the two match.
2) In the recent paper by T. Kinjo and N. Koseki \ci[Thm. 1.1]{kk}, an isomorphism of the form (\ref{to7}) is obtained by a method that differs  from ours.

{\bf Theorem \ref{c cop p} (Same curve, different degrees; $char (\field) =p>0$).} 
Here, $p>0.$ Let $d,d'$ be degrees coprime to the rank $r$
and assume $p>r$. Then we prove the statement analogous to Theorem \ref{c cop}. 

We want to emphasize the  following amusing fact:  Theorem \ref{e4} (a result in positive characteristic) is used
to prove Theorem \ref{c cop} (a result in characteristic zero);  in turn, this latter result is used to prove
Theorem \ref{c cop p} (a result in positive characteristic).

{\bf Acknowledgments.} 
We thank the referee for the excellent suggestions.
We are very grateful to Mircea~Musta\c{t}\u{a} for providing us with a proof of the properness criterion
afforded by Proposition \ref{ad hoc pr}.
We are also very grateful to Michael Groechenig for many inspiring conversations on the subject.
We thank Dan Abramovich,   Barghav Bhatt, H\'el\`ene Esnault, Jochen Heinloth, 
Luc Illusie, Adrian Langer, Davesh Maulik, Junliang Shen,  Ravi Vakil
and Angelo Vistoli for very useful and pleasant  email and Zoom exchanges.
 M.A. de Cataldo is  partially supported by NSF grant DMS  1901975 and by a Simons Fellowship in Mathematics.
 S. Zhang is partially supported by NSF grant DMS  1901975.
M.A. de Cataldo dedicates this paper to the memory of his parents, with love.

\subsection{Notation and preliminaries}\la{bzzz}$\;$

{\bf The schemes we work with.}
We fix a base ring $\brg$ that is either a field, or a 
discrete valuation ring (DVR), possibly of mixed characteristic $(0,p>0)$. We work with separated schemes of finite type
over $\brg$,  and with   $\brg$-morphisms that are separated and of finite type.
The term variety is reserved to schemes as above when the base is a field.  

{\bf Constructible derived categories and perverse $t$-structures over the DVR.} 
Let $\ell$ be a prime number invertible in $J$.
We employ  the usual formalism of the corresponding 
``derived" categories $D^b_{c}(-,\oql)$ of bounded constructible ``complexes" of $\oql$-adic sheaves
endowed with the appropriate version of the middle perversity $t$-structure: the classical one if $J$ is a field;
the rectified one  if $J$ is a DVR as above.
When working  over a field with the usal six functors and the perverse $t$-structure, the references 
  \ci[Thm. 6.3]{ek} and \ci{bbdg} are suffcient for our purposes. When working over a DVR as above, we need complement these references  so that we can work with nearby/vanishing cycles functors and their $t$-exactness properties for the rectified perverse $t$-structure. For a discussion and additional references, see \ci[\S5.2]{de-zh B}.

{\bf The perverse Leray filtration.}
\'Etale cohomology groups are taken only for varieties over algebraically closed fields $J=\field.$ More often than not, we  drop ``\'etale."
 Let $f: X \to Y$ be a  $\field$-morphism  and let $K\in D^b_{c}(X,\oql).$ We denote the functor $Rf_*$ simply by $f_*$; the derived direct images are denoted by $R^\bullet f_*$, for $\bullet \in \zed.$ We denote the perverse truncation functors $\ptd{\bullet}$, for $\bullet \in \zed.$
 The increasing perverse Leray
filtration $P^f_\bullet$ on $H^\star (X,K)$ is defined by setting, for every $\bullet,\star \in \zed$ 
\beq\la{def plf}
P^f_\bullet H^\star (X,K):= \im \{H^\star (Y, \ptd{\bullet} Rf_* K) \to H^\star (Y, Rf_* K)=H^\star (X,K)\}.
\eeq
Let $f:X\to Y$ and $g:Y  \to Z$ be morphisms of $\field$-varieties. If $g$ is finite,  then $g_*$ is $t$-exact
(hence, being cohomological, exact on the category of perverse sheaves),
so that
\beq\la{comp pt}
\xymatrix{
P^{g\circ f}_\bullet H^\star (X,K) \,=\, P^{f}_\bullet H^\star (X,K). 
}
\eeq

{\bf \'Etale cohomology rings.}
When working with separated schemes of finite type (varieties) over an algebraically closed field $\field$ of positive characteristic $p>0$,
we fix any other prime   $\ell \neq p$.
The graded  \'etale cohomology groups $H^* (-, \oql)$ of such a variety  form a  unital, associative, graded-commutative $\oql$-algebra for the cup product operation.  A graded morphism  between the graded \'etale cohomology groups of two varieties   preserving these structures is simply called a morphism of  cohomology rings. Of course, pull-backs via morphisms are examples. In this paper,
we find isomorphisms of cohomology rings, with additional compatibilities, that do not arise from  morphisms.

\subsection{Reminder on vanishing/nearby cycles, and specialization in cohomology}\la{rem vn}$\;$

We briefly recall the general set-up for the formalism of nearby-vanishing cycles using strictly Henselian traits; 
see  \ci{sga72,illusie} and \ci[p.214, Remark]{ek}. Caveat: there are several  distinct and all well-established ways to denote nearby/vanishing cycles in the literature;  our notation $\phi$ for the vanishing cycle differs by a shift (our $\phi[1]$ is their $\phi$) with respect to the given references; our current notation makes $\phi$ and $\psi[-1]$ $t$-exact functors, and is in accordance with \ci{de-2021, de-zh B}, as well as with  other occurrences in the literature.

{\bf Strictly Henselian traits.}
Let $(S,s,\eta ,\ov{\eta})$ be a strictly Henselian trait together with  a minimal choice of generic  geometric point, i.e.:
\ben
\item $S$ is the spectrum
of a   strictly Henselian    discrete valuation ring, hence   with separably closed residue field;
\item
 $i: s\to  S$ is the closed point (it is also a geometric point);
 \item
 \begin{equation}
 \label{jbar}
 \bar{j}: \ov{\eta} \to \eta  \to S
 \end{equation} 
 is the generic point of $S$,  with the associated geometric point stemming from a fixed choice of a   separable closure  $k(\eta)^{\rm sep}/k(\eta)$ of the fraction field of the Henselian ring.
 \een
 The objects restricted via the base change  $i:s \to S$  are denoted by a subscript $-_s$, and similarly for $-_\eta$ and for $-_{\ov{\eta}}$.

{\bf Vanishing/nearby cycles.}
Let $v:X \to S$ be a morphism of finite type. We have the distinguished triangle of functors
\[
\xymatrix{
i^* \ar[r]   & \psi_v \ar[r] & \phi_v [1]  \ar@{~>}[r] &,
}
\]
where the three functors  are functors $D^b_{c}(X,\oql) \to D^b_{c} (X_s, \oql)$. The functor $\psi_v$ is called the nearby cycle functor
and the functor $\phi_v$ is called the vanishing cycle functor.
By restricting to $\eta$, we can also view the functor $\psi_v$ as a functor $D^b_{c} (X_\eta) \to D^b_{c}(X_s)$.
If $\eta^*F \simeq \eta^*G,$ then $\psi_v(F) \simeq \psi_v (G)$, functorially.

{\bf The specialization morphism ${\rm sp}$.}
For $F$ in  $D^b_{c}(X)$, we have the fundamental diagram
\beq\la{rr12}
\xymatrix{
H^* (X_s, F) & H^* (X,F) \ar[l]_-{i^*} \ar[r]^{\ov{\eta}^*} & H(X_{\ov{\eta}}, F).
}
\eeq
If  $i^*$ is an isomorphism, then we define the specialization morphism by setting
\beq\la{spz1}
\xymatrix{
R^\bullet  v_* i^* F = H^\bullet  (X_s, F) \ar[rrr]^-{{\rm sp}:= \,\ov{\eta}^* \circ {(i^*)^{-1}}} && &  H^\bullet (X_{\ov{\eta}}, F) = \m{H}^\bullet_{s} (\psi_v v_* F), & \forall \bullet \in \zed.
}
\eeq
 By the Proper Base Change Thoerem, if $v$ is proper, then $i^*$ is an isomorphism and the specialization morphism is defined. However, it  $v$ is not proper, then $i^*$ may fail to be an isomorphism and  the specialization morphism may fail to be defined. \ci{de-2021} is devoted to explore this phenomenon, and in this paper, we work in such a situation.

\begin{rmk}\la{comp cup}
If the specialization morphism is defined, then it is compatible with cup products, e.g. when $F=\oql$.
More generally, it is compatible with pairings $F'\otimes F'' \to F$ of objects in $D^b_{c}(X)$ \ci[\S4.3]{illusie}.
\end{rmk}

\begin{fact}\la{fvz}
For the purpose of  this paper, the most important properties  of the vanishing cycle functors are:

\ben
\item
 If $v$ is smooth, then $\phi_v (\oql)=0$;
see \ci[XIII, Reformulation 2.1.5]{sga72}.

\item
If $f: Y\to X$ is a proper morphism, and  $u:Y \to S$ and $f_s: Y_s \to X_s$ 
are the resulting morphisms,  then, by proper base change, we have  natural isomorphisms $\phi_v   f_* = f_{s,*} \phi_u$ and $\psi_v   f_* = f_{s,*} \psi_u$ (\ci[XIII, (2.1.7.1)]{sga72}).
\een
\end{fact}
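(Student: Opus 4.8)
The plan is to prove the two statements separately; both are instances of base-change theorems in the six-functor formalism and their substance is contained in \ci[XIII]{sga72}, so I would sketch the standard arguments rather than rebuild the formalism. Throughout I use the defining triangle $i^* \ra \psi_v \ra \phi_v[1] \rightsquigarrow$ together with the realization of the nearby cycle functor as a composite $\psi_v(-)=i^*R\bar{j}_{*}\,\bar{j}^*(-)$, where, in the strictly Henselian trait $(S,s,\eta,\ov{\eta})$ of (\ref{jbar}), the map $\bar{j}$ is the inclusion of the geometric generic fiber and $i$ that of the special fiber $X_s$.

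For (1), the triangle identifies the vanishing $\phi_v(\oql)=0$ with the assertion that the canonical map $i^*\oql \ra \psi_v(\oql)$ is an isomorphism; since $i^*\oql=\oql_{X_s}$, this says that the constant sheaf specializes isomorphically. I would deduce it from the local acyclicity of smooth morphisms relative to constant coefficients: by smooth base change $\psi_v$ commutes with \'etale localization on $X$, which reduces the question to a projection $v:\Aa^n_S\ra S$, for which $\psi_v(\oql)=\oql_{X_s}$ with transition map the identity. This is the content of \ci[XIII, Reformulation 2.1.5]{sga72}, the only substantive input being the universal local acyclicity of smooth morphisms.

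For (2), I would chain base change through the composite defining $\psi_v$. Put $u=v\circ f$ and let $\bar{f}$ denote $f$ pulled back over the normalization $\bar{S}$ of $S$ in $\ov{\eta}$, so that $\bar{j}_X\circ f_{\ov{\eta}}=\bar{f}\circ \bar{j}_Y$ and $\bar{f}$ restricts to $f_s$ over $s$. Proper base change for the proper map $f$ yields $\bar{j}_X^*f_*=f_{\ov{\eta},*}\,\bar{j}_Y^*$ on generic fibers and $i_X^*\bar{f}_*=f_{s,*}\,i_Y^*$ on special fibers, while $R\bar{j}_{X,*}f_{\ov{\eta},*}=\bar{f}_*R\bar{j}_{Y,*}$ is functoriality of pushforward across that square. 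Composing the three gives $\psi_v f_*=i_X^*R\bar{j}_{X,*}\bar{j}_X^*f_*=f_{s,*}\,i_Y^*R\bar{j}_{Y,*}\bar{j}_Y^*=f_{s,*}\psi_u$, naturally in the argument. The claim for $\phi_v$ is then formal: the triangle $i^*\ra\psi_v\ra\phi_v[1]$ is natural, $i^*$ commutes with $f_*$ by proper base change, and $\psi_v$ does by what we just proved, so comparing the image under $f_{s,*}$ of the triangle for $u$ with the triangle for $v$ realizes $\phi_v f_*=f_{s,*}\phi_u$ as the induced map on cones.

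I expect the main obstacle to be bookkeeping rather than conceptual: one must fix the Henselian-trait normalizations so that $i^*R\bar{j}_*\bar{j}^*$ genuinely computes $\psi_v$, and then verify that the three proper-base-change isomorphisms are compatible with the natural transformation $i^*\ra\psi_v$, so that the comparison in (2) is a bona fide morphism of distinguished triangles. Granting this compatibility, the passage to cones is automatic and the result follows. All of this is carried out in \ci[XIII]{sga72}, which is why we are content to record these properties as a Fact.
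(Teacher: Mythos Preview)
Your sketch is correct and aligns with the paper's treatment: the paper records these as a Fact with bare citations to \ci[XIII]{sga72} and gives no proof, so your outline of the standard local-acyclicity and proper-base-change arguments is consistent with, and more detailed than, what appears there.
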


The moduli spaces we work with are not proper over their base, so that it is not clear at the outset that the various specialization morphisms
we wish to consider are even defined. In this context, we prove Proposition \ref{sp cptz} for use in \S\ref{omq2}.
On the other hand,  in \S\ref{the mainz}, we circumvent the direct use of these specialization morphisms; see the proof of Theorem \ref{zmain}.

\subsection{The moduli spaces we work with}\la{rino}$\;$

 The existence, quasi projectivity, and uniform  (universal  in the coprime case when not in characteristic zero) corepresentability of the moduli  spaces we are about to introduce
 have been established by C. Simpson \ci{si mo I, si mo II}  for smooth projective families over a base  of finite type over a ground  field of characteristic zero, and over a base of finite type over a universally Japanese ring by A. Langer \ci[Theorem. 1.1]{la-2014}.  Recall that ``universal" (``uniform," resp.) refers to the commutation of the formation
 of the coarse moduli space with arbitrary (flat, resp.) base change.

 {\bf Base over base ring.}
 In this paper, we only need to consider the set-up of a base $\base$ that is Noetherian, and of finite type
 over  a base ring $\brg$, that is either an algebraically closed field $\field$, or a DVR.
 For a  more general setup and more details concerning the moduli spaces we use, see \ci{de-zh B}.
  Note that for the sake of the existence of the moduli spaces, the assumption on the base has been relaxed to $B$ being any noetherian scheme in Langer's recent paper \ci[Theorem 1.1]{la-2021}.

{\bf Smooth curves.}
In this paper, a smooth curve $\csub$ is a smooth projective  morphism $C\to B$  with geometric fibers  integral of dimension one.
If the base $\base = \brg =\field$ is a field, then we often write $C$ instead of $C/\field.$

{\bf Coprimality assumption on rank, degree, and characteristic of the ground field.} 
When working with vector bundles, we denote their rank by $r$, and their degree by $d$. In this paper, we always assume they are coprime, i.e. ${\rm g.c.d.}(r,d)=1.$
When working with  the de Rham moduli space of stable (=semistable)   connections on a smooth
curve over an algebraically closed field of positive characteristic $p>0$,  we always assume,
in addition,  that the degree $d=\ov{d}p$ is an integer  multiple of the characteristic $p$; otherwise, there are no such  connections.  Our assumptions imply   that stability coincides with semistability thus ensuring:
 1)  the nonsingularity of the  Hodge ($t$-connections), Dolbeault (Higgs bundles) 
and de Rham (connections) moduli spaces (cf. \S\ref{smothh mdr}); 2)   that these moduli universally (instead of merely uniformly)
corepresent their moduli functor (\ci[Tm. 1.1]{la-2014}), so that
the formation of such moduli spaces commutes with arbitrary base change into  the moduli space, hence in particular  into $\base$, or $\brg$.

Regrettably, the coprimality assumptions
 rules out the  important case of connections of degree zero. On the other hand, these assumptions are the most natural  when dealing
 with nonsingular moduli spaces. While our methods require 1) and  2) above, one wonders if many of the result of this paper hold
 without the coprimality assumption, i.e. for the possibly singular  Hodge/Dolbeault/de Rham moduli spaces that arise. 
 We are not sure what to expect in the singular case. Note also
 that the ``p-multiplicative periodicity" results Theorems \ref{int cons} and \ref{e4}  express a property of the Dolbeault moduli spaces
  that acquires a non trivial meaning only in non zero degrees; similarly, for Theorems \ref{c cop} and \ref{c cop p}. 
 
 
{\bf The Hodge moduli space.}
A $t$-connection on a smooth curve  $\csub$ is a triple $(t, E, \nabla_t)$, where $t$ is a regular function on $\base$,
$E$ is a vector bundle on $C$, $\nabla_t: E \to E \otimes_{\m{O}_C} \Omega^1_{\csub}$ is $\m{O}_\base$-linear and satisfies
the twisted Leibnitz rule  $\nabla_t (f \s)=  tdf \otimes \s + f \nabla_t(\s)$, for every local function $f$ on $C$, and every local section $\s$ of
$E$ on $C$ on the same open subset.
There is the  quasi-projective $\base$-scheme $M_{Hod}(\csub ;r,d)$ (cf. \ci[Thm. 1.1]{la-2014}), coarse Hodge moduli space universally corepresenting   slope stable  $t$-connections of rank $r$ and degree $d$  on the  smooth curve $\csub$. It comes with a natural $\base$-morphism of finite type to the affine line assigning $t$  to a $t$-connection  
\beq\la{mz1}
\xymatrix{
\tau_{Hod} (\csub;r,d): M_{Hod}(\csub;r,d) \ar[r] &
\Aa^1_\base.
}
\eeq

{\bf Dolbeault moduli space and Hitchin morphism.} By the universal corepresentability property,
if we take the fiber over the origin  $0_\base \to \Aa^1_\base$, then we obtain the   quasi-projective $B$-scheme 
\beq\la{mdol}
M_{Dol}(\csub ;r,d),
\eeq
coarse Dolbeault moduli space universally corepresenting  slope stable 
rank $r$ and degree $d$  Higgs bundles, twisted by the canonical bundle,  on the family of curves $\csub$. 
If $\base$ is a field, then the Dolbeault moduli space is empty if an only if the genus of the curve is zero  and the rank $r \geq2;$ otherwise,
this moduli space  is integral, nonsingular, and  of dimension that depends only on the rank $r$  and genus $g$  of the curve
(cf. \ci[\S7]{nitsure})
\beq\la{la dim e} 
\dim M_{Dol}(C,r,d)=  r^2 (2g-2)+2.
 \eeq 
 Let $A(\csub;r)$ be the vector bundle on $B$  of rank one half  the dimension (\ref{la dim e}), with fiber 
 $H^0(C_b, \oplus_{i=1}^r \omega_{C_b}^{\otimes i}).$
 There is the  projective and surjective   Hitchin $\base$-morphism   
 \beq\la{mhi}
\xymatrix{
h_{Dol} (\csub;r,d): M_{Dol}(\csub ;r,d) \ar[r] & A(\csub; r),
}
\eeq
assigning to a Higgs bundle, the characteristic polynomial of its Higgs field. 
For the projectivity of the Hitchin morphism over a base, see \ci[Th. 2.18]{de-zh B}.

{\bf The Hitchin base.}
The $\base$-scheme  $A(\csub;r)$ is sometimes called the Hitchin base,
or the space of characteristic polynomials of rank $r$  Higgs fields, or the space of degree $r$ spectral curves  over $\csub$.

{\bf de Rham moduli space and de Rham-Hitchin morphism.} 
If we take the fiber  of (\ref{mz1}) over   $1_{\base} \to \Aa^1_{\base}$, then we obtain the   quasi-projective $\base$-scheme 
\beq\la{mdr}
M_{dR}(\csub;r,d), 
\eeq
coarse de Rham moduli space, universally corepresenting slope  rank $r$ and degree $d$ 
stable  connections   on the family of curves $\csub$.  

If $\brg= \field$ is  an algebraically closed   field of characteristic zero, 
then  the de Rham moduli space  is non-empty iff and only if $d=0$. 

If $\brg =\field$ is   an algebraically closed  field of positive characteristic $p$, then  
the de Rham moduli space is non empty if and only if $d=\ov{d}p$ is an integer multiple of $p$ (recall that this is part of our assumptions on rank, degree and characteristic); see \ci[Pr. 3.1]{bi-su}.  In this case,  it is shown in Lemma \ref{nonempty} that the de Rham moduli space  is integral, nonsingular,
of the same dimension (\ref{la dim e}) as the Dolbeault moduli space for  the same rank and degree. 
In this case we also have the
projective and surjective  de Rham-Hitchin  $\base$-morphism
\beq\la{mhidr}
\xymatrix{
h_{dR} (\csub;r,\ov{d}p): M_{dR}(\csub ;r,\ov{d}p) \ar[r] & A(\csubf; r),
}
\eeq
where ${\csubf}$ is the base change of $\csub$ via the absolute Frobenius endomorphism $fr_{\base} : B \to B$
(absolute Frobenius for $B$: identity of topological space; functions raised to the $p$-th power). 
The de Rham-Hitchin morphism is defined in \ci[Def. 3.16]{gr-2016}. It is shown to be proper in \ci[Cor. 3.47]{gr-2016}, thus projective in view of the quasi-projectivity  at the source.
For every closed point $b \in B$, we have that the fiber
$(\csubf)_b  = (C_b)^{(1)} =: \kappa(b) \times_{\kappa (b), fr_{\kappa (b)}} C$ is the Frobenius twist of the curve  $C/\kappa (b)$, i.e. the base change of $C/\kappa (b)$ via the
absolute Frobenius automorphism $fr_{\kappa (b)}$ of $\kappa (b)$.
The   fiber   at $b\in B$ of the vector bundle 
$A(\csubf;r)$ is  given by
 $\oplus_{i=1}^r H^0(C_{b}^{(1)}, \omega^{\otimes i}_{C^{(1)}_b})$.
 
{\bf Hodge-Hitchin morphism ($char (\field)=p>0$).} Let $\brg =\field$ be an algebraically closed field of  positive characteristic  $p>0.$
Y. Lazslo and C. Pauly \ci{la-pa} (see also \ci{de-zh B}) have constructed a natural  factorization of the morphism $\tau_{Hod}$ (\ref{mz1}) 
\beq\la{mz2}
\xymatrix{
\tau_{Hod} (\csub;r,d): M_{Hod}(\csub;r,d) \ar[rr]^-{h_{Hod} (\csub;r,d)} &&
A(X^{(B)}/B;r)  \times_{\base} \Aa^1_{\base} \ar[r]^-{\rm pr_2} &
\Aa^1_{\base}.
}
\eeq
We call the  quasi-projective $B$-morphism 
$h_{Hod}(\csub;r,d)$ the Hodge-Hitchin morphism. It assigns to a $t$-connection on a curve $C$, the characteristic polynomial of its $p$-curvature:
the $p$-curvature is an Higgs field on the same underlying vector bundle on the curve $C$, but for the $p$-th power of the canonical line bundle; the key observation is that this characteristic polynomial  is the  pull-back via the relative Frobenius
morphism  $Fr_C: C\to \FC$ of a uniquely determined  characteristic polynomial  on $\FC$.

If we specialize $h_{Hod}(\csub;r,d)$  at $1_{\base}$, then we obtain the de Rham-Hitchin morphism
\beq\la{d11}
\xymatrix{
h_{dR}(\csub; r,d): = h_{Hod}(\csub;r,d)_{1_{\base}}: M_{dR}(\csub;r,d) \ar[r] & A(X^{(B)}/B;r).
}
\eeq
If we specialize $h_{Hod}(\csub;r,d)$  at $0_{\base}$, then we obtain the classical  Hitchin morphism post-composed with the 
Frobenius relative to $B$ (see  \ci{de-zh B})
\beq\la{d100m}
\xymatrix{
h_{Hod}(\csub;r,d)_{0_{\base}}:
M_{Dol}(\csub;r,d) \ar[rr]^-{h_{Dol}(\csub;r,d)} && A(\csub;r) \ar[rr]^-{Fr_{A(\csub;r)/B}}&& A(X^{(B)}/B;r).
}
\eeq

{\bf $\Gm$-actions and equivariance.}
The group scheme ${\Gm}_{,\base}$ acts on the Hodge moduli space by weigth $1$ dilatation on the $t$-connections: $\lambda \cdot \nabla_t:=
\nabla_{\lambda t}$, and similarly on $\Aa^1_{\base}.$ The morphism $\tau$ (\ref{mz1}) is ${\Gm}_{,\base}$-equivariant for these actions.
Moreover, the pre-image of ${\Gm}_{,\base} \subseteq \Aa^1_{\base}$ is canonically and ${\Gm}_{,\base}$-equivariantly
a fiber product over $B$  of the de Rham moduli space times ${\Gm}_{,\base}$, i.e. we have 
(see  \ci{de-zh B})
\beq\la{prfr}
\tau^{-1}({\Gm}_{,\base})
\simeq M_{dR}(\csub)\times_{\base} {\Gm}_{,\base}.
\eeq

If $\brg= \field$ is an algebraically closed field of positive characteristic $p>0$, then the group scheme ${\Gm}_{,\base}$ acts on $A(\csubf;r)  \times_{\base} \Aa^1_{\base}$ as follows: by weigth $1$ dilations
on $\Aa^1_{\base}$; by weight $ip$ dilations on each term $H^0(C_{b}^{(1)}, \omega^{\otimes i}_{C^{(1)}_b}).$

If $\brg$ is arbitrary, then the group scheme ${\Gm}_{,\base}$ acts on $A(\csub;r)  \times_{\base} \Aa^1_{\base}$ in a similar way, but by with weight
$i$ diltations on each term  $H^0(C_{b}, \omega^{\otimes i}_{C_b}).$

All the  morphisms appearing  in (\ref{mz2}), (\ref{d11}) and  (\ref{d100m}) are ${\Gm}_{,\base}$-equivariant for specified actions.
Moreover, the trivialiazation  (\ref{prfr}) extends to an evident ${\Gm}_{,\base}$-equivariant  trivialization of (\ref{mz2})
over ${\Gm}_{,\base} \subseteq \Aa^1_{\base}$ and, in particular, we have a natural ${\Gm}_{,\base}$-equivariant identification
\beq\la{hh91}
h_{Hod}|{\Gm}_{,\base} = h_{dR} \times_{\base}  {\rm Id}_{{\Gm}_{,\base}}.
\eeq

Even without the coprimality assumption, the following properness statement is proved
in \ci[Thm. 2.13.(2)]{de-zh B}, and it can also be seen as a consequence of what is stated  in   \ci[top of p. 321]{la-2014}. We thank A. Langer for providing us with a proof in a private communication (Added in revision: A. Langer's communication now appears  in \ci[Thm. 1.3]{la-2021}).
This properness result  plays an essential role in this paper.
An alternative proof of this properness under our coprimality assumptions is given  in Proposition \ref{hh is proper} which, in turn,  is based on the ad hoc criterion Proposition \ref{ad hoc pr}.

\begin{tm}\la{hohiisproper}
The Hodge-Hitchin morphism $h_{Hod}$ (\ref{mz2}) is proper, in fact projective.
\end{tm}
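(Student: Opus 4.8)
The plan is to reduce the properness of $h_{Hod}$ to a valuative criterion and then verify it by exploiting the $t$-connection structure together with the semistability/coprimality hypotheses. Since $h_{Hod}$ factors through the quasi-projective $B$-scheme $A(X^{(B)}/B;r)\times_\base \Aa^1_\base$ (see (\ref{mz2})), and the source $M_{Hod}(\csub;r,d)$ is also quasi-projective over $B$, it suffices to prove that $h_{Hod}$ is proper; projectivity then follows formally from the quasi-projectivity of the source. Properness will be checked via the valuative criterion: given a DVR $R$ with fraction field $K$, an algebraic closure of $K$ suffices, and a commutative square with an $R$-point of the target and a $K$-point of the source lifting it, one must produce a unique $R$-point of the source after possibly a finite extension of $R$ (existence), with uniqueness following from separatedness of the source over $B$. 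Concretely, the $K$-point is a $t$-connection $(t,E_K,\nabla_{t,K})$ on $C_K$ whose $p$-curvature characteristic polynomial extends to an $R$-point of $A(X^{(B)}/B;r)$ and whose parameter $t$ extends to $t\in R$.

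The key steps, in order, are as follows. First, using the boundedness of the family of semistable $t$-connections of fixed rank and degree (Langer's results, \ci{la-2014}), after a finite base change one extends the underlying bundle $E_K$ to a flat family of torsion-free, hence locally free, sheaves on $C_R$; this is a Langton-type semistable-reduction argument, where the crucial point is that one can perform elementary modifications preserving the $t$-connection (the twisted Leibniz rule is stable under the relevant Hecke modifications) and that the Harder–Narasimhan type does not jump, because coprimality of $(r,d)$ forces semistable $=$ stable and rules out properly semistable degenerations. Second, one shows the limiting $t$-connection $\nabla_{t,R}$ has the prescribed $p$-curvature characteristic polynomial: the $p$-curvature is a morphism of coherent sheaves depending algebraically on $\nabla_t$, so its characteristic polynomial specializes correctly, and by the flat limit construction it agrees with the given $R$-point of $A(X^{(B)}/B;r)$ on the generic fiber, hence everywhere by flatness/separatedness of $A$. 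Third, one invokes the universal corepresentability of $M_{Hod}$ under the coprimality assumption (so the moduli space represents the functor after base change into it) to conclude the limiting object defines a genuine $R$-point of $M_{Hod}(\csub;r,d)$ lying over the given $R$-point of the target. Uniqueness of the extension follows from separatedness of $M_{Hod}\to B$, which is part of its quasi-projectivity.

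The main obstacle I expect is Step one: controlling the degeneration of the bundle so that semistability (equivalently stability) is preserved in the limit while the $t$-connection structure survives the elementary modifications. The subtlety is that Langton's original argument modifies the sheaf to restore semistability, and one must check that each such modification is compatible with $\nabla_t$ — i.e. that the destabilizing subsheaf appearing in a non-semistable limit is automatically $\nabla_t$-stable, which is where one uses that $t$-connections with $t\neq 0$ rescale to flat connections and the standard fact that for $t=0$ Higgs subsheaves of a semistable Higgs bundle of the same slope cannot exist by coprimality; the uniformity across all $t\in R$ (the special value $t=0$ versus $t$ a unit) requires a slightly careful case analysis. An alternative, possibly cleaner route — and the one the paper flags via Proposition \ref{ad hoc pr} and Proposition \ref{hh is proper} — is to bypass the direct valuative verification on $M_{Hod}$ by checking an \emph{ad hoc} properness criterion: properness of $h_{Hod}$ can be tested after the proper, surjective base change given by the relative Frobenius and the spectral correspondence, reducing to properness of a Hitchin-type morphism for spectral data, which is handled by the classical argument that finite flat covers (spectral curves) degenerate within the moduli of torsion-free rank-one sheaves, a proper space. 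I would pursue the \emph{ad hoc} criterion as the primary strategy, keeping the Langton-style argument as the conceptual backbone.
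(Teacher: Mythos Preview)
Your Langton-style valuative argument is a legitimate outline and is in spirit what the references \ci{de-zh B, la-2014, la-2021} carry out (the paper itself disposes of Theorem \ref{hohiisproper} by citation to these). The genuine work you would still owe is precisely the point you flag: proving that at each stage of the Langton process the maximal destabilizing subsheaf of the special fibre is $\nabla_t$-invariant, uniformly in the value of $t$ in the DVR; this is not automatic and is the content of Langer's argument.

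However, your description of the paper's alternative proof via Propositions \ref{ad hoc pr} and \ref{hh is proper} is incorrect, and if you pursued it as written you would not get a proof. The criterion has nothing to do with base-changing by relative Frobenius or passing to spectral data. Proposition \ref{ad hoc pr} is a general statement: for a factorization $m\circ f: X\to Y\to T$ with $X,Y$ integral, $Y$ normal, $f$ separated, finite type, surjective with geometrically connected fibres, if the restricted morphism $f_t: X_t\to Y_t$ is proper for every closed point $t\in T$, then $f$ itself is proper. Its proof goes through a Nagata compactification of $f$, its Stein factorization, and a delicate argument showing the finite part is a universal bijection. The application (Proposition \ref{hh is proper}) takes $f=h_{Hod}$, $Y=A(\FC)\times\Aa^1_\field$, $T=\Aa^1_\field$: then $h_{Hod,0}=Fr_A\circ h_{Dol}$ is proper because the Hitchin morphism is proper and $Fr_A$ is finite, while for $t\neq 0$ the trivialization (\ref{hh91}) identifies $h_{Hod,t}$ with the de Rham-Hitchin morphism $h_{dR}$, already known to be proper by \ci[Cor.~3.47]{gr-2016} (via the \'etale-local equivalence with the Hitchin morphism for $\FC$). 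So the ad hoc route \emph{collates} two separately-established properness statements over the slices of $\Aa^1_\field$, rather than reducing to a single spectral-data compactification; the nontrivial step is the criterion itself, not a spectral correspondence.
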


\subsection{Smoothness of  moduli spaces}\la{smothh mdr}$\;$

In this section, we place ourselves in  the following special case of the set-up in \S\ref{rino}:
$C= C/\field$ is  a smooth curve over an algebraically closed field $\field$ of positive characteristic $p$,
the degree $d=\ov{d} p$ is an integer multiple of the characteristic and ${\rm g.c.d.} (r,d)=1.$ 

The aim is to prove Proposition \ref{t is smooth}, to the effect that under these coprimality conditions the morphism $\tau_{Hod}(C;r, \ov{d}p)$ (\ref{mz1}) 
is smooth. This smoothness is essential to the  approach we take in this paper via vanishing/nearby cycle functors.

\begin{lm}[{\bf (Smoothness of $M_{dR}$)}]\la{nonempty}
The moduli space $M_{dR}(C;r,\ov{d}p)$ of stable  connections   is non empty, integral,   quasi-projective, non-singular, of the same dimension  (\ref{la dim e}) of the corresponding moduli space
$M_{Dol}(C;r, \ov{d}p)$ of  stable Higgs bundles of the same degree and rank.
In particular, the fibers of the morphism $\tau_{Hod} (C;r, \ov{d}p)$   (\ref{mz1}) over the geometric points 
of $\Aa^1_\field$ are integral, nonsingular of the same dimension (\ref{la dim e}).
\end{lm}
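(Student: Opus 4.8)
The plan is to deduce the statement from four properties of the coarse de Rham moduli space: non-emptiness, quasi-projectivity, nonsingularity together with the dimension count, and irreducibility. Throughout, the coprimality $(r,d)=1$ is what guarantees that stability and semistability coincide, so that $M_{dR}(C;r,\ov{d}p)$ is a genuine coarse space with no strictly semistable points and it universally corepresents its functor; quasi-projectivity is then part of Langer's existence theorem \ci[Thm. 1.1]{la-2014}, and non-emptiness under the standing hypothesis $d=\ov{d}p$ is \ci[Pr. 3.1]{bi-su} (see \S\ref{rino}).

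For nonsingularity and the dimension I would run the standard deformation theory of connections, which is insensitive to the characteristic. At a stable connection $(E,\nabla)$, first-order deformations are classified by $\mathbb{H}^1(C,[\mathcal{E}nd(E)\xrightarrow{\nabla}\mathcal{E}nd(E)\otimes\Omega^1_C])$, and obstructions lie in $\mathbb{H}^2$ of the same two-term de Rham complex. Since $(r,d)=1$ forces $p=\mathrm{char}(\field)\nmid r$, the trace splits off a direct summand $\mathcal{O}_C$ and it suffices to treat the trace-free de Rham complex of $\mathcal{E}nd_0(E)$: stability makes every $\nabla$-flat endomorphism a scalar, so its $\mathbb{H}^0$ vanishes, and Serre duality --- the trace-free de Rham complex being self-dual because $p\nmid r$ --- identifies $\mathbb{H}^2$ with $(\mathbb{H}^0)^{\vee}$, which therefore vanishes as well. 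Hence the deformation problem is unobstructed and $M_{dR}(C;r,\ov{d}p)$ is nonsingular; Riemann--Roch for the two-term complex, together with the $\dim_{\field}\mathbb{H}^1_{dR}(C)=2g$ contributed by the varying determinant, yields the dimension $(r^2-1)(2g-2)+2g=r^2(2g-2)+2$, which is exactly (\ref{la dim e}). Alternatively, nonsingularity and the dimension can be read off from Groechenig's \'etale-local equivalence \ci[Thm. 3.29, Lm. 3.46]{gr-2016} of $h_{dR}\colon M_{dR}(C)\to A(\FC)$ with the Hitchin fibration $h_{Dol}\colon M_{Dol}(\FC)\to A(\FC)$, since the source of the latter is nonsingular of dimension $r^2(2g-2)+2$ by (\ref{la dim e}).

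Being nonsingular, $M_{dR}(C;r,\ov{d}p)$ is reduced and locally irreducible, so integrality reduces to connectedness, and here I would use \ci{gr-2016} again: the de Rham-Hitchin morphism $h_{dR}\colon M_{dR}(C)\to A(\FC)$ is proper \ci[Cor. 3.47]{gr-2016} and surjective onto the irreducible affine space $A(\FC)$, and, being \'etale-locally over $A(\FC)$ isomorphic to $h_{Dol}\colon M_{Dol}(\FC)\to A(\FC)$, it has geometrically connected fibers because the Hitchin fibration does --- the latter by Stein factorization, since $A(\FC)$ is normal, $M_{Dol}(\FC)$ is integral, $h_{Dol}$ is proper, and its generic fiber is a torsor under an abelian variety and hence geometrically connected. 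A proper surjective morphism with geometrically connected fibers onto a connected base has connected source, so $M_{dR}(C;r,\ov{d}p)$ is connected, hence irreducible, hence integral. The assertion about the fibers of $\tau_{Hod}(C;r,\ov{d}p)$ is then immediate: the fiber over $0$ is $M_{Dol}(C;r,\ov{d}p)$, which is integral and nonsingular of dimension $r^2(2g-2)+2$ by (\ref{la dim e}), while the $\Gm$-equivariant trivialization (\ref{prfr}) identifies the fiber over any nonzero geometric point of $\Aa^1_{\field}$ with $M_{dR}(C;r,\ov{d}p)$, which has just been shown to enjoy the same properties.

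The step I expect to be the main obstacle is the vanishing of the obstruction space in the deformation argument: one must genuinely isolate the trace-free summand (using $p\nmid r$) before Serre duality applies, since for $\mathcal{E}nd(E)$ itself the pertinent $\mathbb{H}^2$ is the one-dimensional dual of the scalars and does \emph{not} vanish. In the alternative, \ci{gr-2016}-based route, the load-bearing external inputs are instead the properness and surjectivity of $h_{dR}$ and the geometric connectedness of the Hitchin fibers in positive characteristic.
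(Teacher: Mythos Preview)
Your proposal is correct. The paper's own proof takes exactly your ``alternative'' route: it reads nonsingularity, equidimensionality, properness/surjectivity of $h_{dR}$, and connectedness of the fibers entirely off the \'etale-local equivalence \ci[Thm.~3.29, Cor.~3.45, Lm.~3.46, Cor.~3.47]{gr-2016} with $h_{Dol}\colon M_{Dol}(\FC;r,\ov d)\to A(\FC)$, and then deduces integrality from the integrality of $M_{Dol}(\FC;r,\ov d)$ together with the connectedness of the fibers. Your integrality argument is the same as the paper's; where you genuinely diverge is in offering a direct deformation-theoretic proof of smoothness and the dimension count, which buys independence from \ci{gr-2016} for that step (though you still invoke it for connectedness). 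Two small points on that primary route: first, the duality $\mathbb H^2\cong(\mathbb H^0)^\vee$ for the \emph{de Rham} complex of $\mathcal End_0(E)$ is not literally coherent Serre duality, since the differential $[\nabla,-]$ is only $\field$-linear --- you are really invoking Poincar\'e duality for the de Rham cohomology of the self-dual flat bundle $\mathcal End_0(E)$, which is valid in positive characteristic but worth naming as such; second, as you yourself flag, splitting off the trace does not make the scalar $\mathbb H^2\cong\field$ disappear --- what makes the full deformation unobstructed is that the trace summand governs the determinant $(\det E,\det\nabla)$, whose moduli (an $H^0(\Omega^1_C)$-torsor over $\mathrm{Pic}^{\ov d p}(C)$) is visibly smooth of dimension $2g$, so the scalar obstruction class is always zero. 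With those two clarifications your deformation argument is complete.
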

\begin{proof} We drop some decorations.
The fiber of $\tau$ over the closed point $0$ is $M_{Dol}$, and the fibers  over the other closed points
are isomorphic to $M_{dR}$ in view of the trivialization  (\ref{mz2}). We are thus left with  proving the assertions for the fiber $M_{dR}$.

Let
 $C^{(1)}$ be the Frobenius twist of the curve $C$. 
Note that $r$ and $\overline{d}:=d/p$ are also coprime.
As recalled in \S\ref{rino},  the moduli space $M_{Dol}(C^{(1)}; r, \ov{d})$ is non-empty, integral,  quasi-projective  nonsingular
of dimension (\ref{la dim e}).
Since its dimension depends only on the genus  $g(C)=g(C^{(1)})$ of the curve $C$,  and on the rank  $r$ (cf. \ci[Prop. 7.4]{nitsure}), we have that 
$M_{Dol}(C;r, \ov{d}p)$ and $M_{Dol}(C^{(1)}; r, \ov{d})$ have the same dimension  (\ref{la dim e}).

Let $h_{Dol}(C^{(1)}, r, \ov{d}): M_{Dol}(C^{(1)}, r, \ov{d}) \to A(C^{(1)}, \omega_{C^(1)},r)$ be the Hitchin morphism for stable
Higgs bundles for  the canonical  line bundle  on  $C^{(1)}$. Since stability and semistability coincide by coprimality, 
this Hitchin morphism is proper (\ci[Th. 6.1]{nitsure}), and  in fact projective,
since the domain is  quasi projective.
Since the general fiber is connected, being the Jacobian of a nonsingular spectral curve (\ci[Prop. 3.6]{be-na-ra}), and the target is
nonsingular, hence normal, this Hitchin morphism has connected fibers \cite[\href{https://stacks.math.columbia.edu/tag/03H0}{03H0}]{stacks-project}. Being proper and dominant, it is also surjective.

Let $h_{dR}(C;r,d): M_{dR}(C; r, d) \to A(C^{(1)}; r)$ be the de Rham-Hitchin morphism for stable connections
on the curve $C$. This morphism is defined in \ci[Def. 3.16, p.1007]{gr-2016}.  As seen in \S\ref{rino}, it coincides with the specialization
at $t=1$ of the Hodge-Hitchin morphism $h_{Hod}(C;r,d)$.

By combining \ci[Th. 1.1, Cor. 3.45 and Lm. 3.46]{gr-2016}, the two morphisms $h_{Dol}(C^{(1)}; r, \ov{d})$ and $h_{dR}(C, r,\ov{d}p)$
are \'etale locally equivalent over the base $A(C^{(1)}; r)$. 

As noted in \ci[Cor. 3.47]{gr-2016},  this \'etale  local equivalence implies that
the de Rham-Hitchin morphism is proper and surjective. In fact, the de Rham-Hitchin morphism  is projective in view of the quasi-projectivity of domain and target.

This \'etale local equivalence also implies that $M_{dR}(C; r, \ov{d}p) $ is nonsingular of pure  dimension $\dim M_{Dol}(C^{(1)}; r,\ov{d})
=\dim M_{Dol}(C;r,d)$ (\ref{la dim e}). By coupling the \'etale local equivalence with
the connectedness of the fibers, and with the integrality of $M_{Dol}(C^{(1)}; r, \ov{d} )$, we deduce that $M_{dR}(C, \ov{d}p)$ is integral as well.
\end{proof}

\begin{pr}[{\bf (Smoothness of $\tau_{Hod}: M_{Hod} \to \Aa^1_\field$)}]\la{t is smooth}
The morphism $\tau_{Hod} (C; r, \ov{d}p)$ (\ref{mz1})  is  a smooth fibration, i.e. smooth, surjective, with connected  fibers,
onto the affine line $\Aa^1_\field$.
The Hodge moduli space  $M_{Hod}(C; r, \ov{d}p)$ of stable  pairs is integral and nonsingular.
\end{pr}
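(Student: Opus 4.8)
The statement has two halves: that $\tau:=\tau_{Hod}(C;r,\ov{d}p)$ is a smooth fibration onto $\Aa^1_\field$, and that $M_{Hod}:=M_{Hod}(C;r,\ov{d}p)$ is integral and nonsingular. By Lemma \ref{nonempty} every geometric fibre of $\tau$ over $\Aa^1_\field$ is integral, nonsingular, of the constant dimension $n:=r^2(2g-2)+2$; in particular $\tau$ is surjective with geometrically connected fibres. Since $\tau$ is of finite type and has smooth fibres of constant dimension, it is smooth as soon as it is flat, and then $M_{Hod}$ is automatically nonsingular because $\Aa^1_\field$ is. So the plan is: (i) prove $\tau$ flat; (ii) prove $M_{Hod}$ irreducible; given (i) and (ii) the space $M_{Hod}$ is regular and irreducible, hence integral, and we are done.

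For (i): $\Aa^1_\field$ is a smooth affine curve, so $\tau$ is flat as soon as $M_{Hod}$ is reduced with no irreducible component contained in a fibre of $\tau$ — equivalently, since the fibres are irreducible of dimension $n$, as soon as $M_{Hod}$ has no embedded point and no component of dimension $\le n$. Over $\Gm\subseteq\Aa^1_\field$ this is clear: by the trivialization (\ref{prfr}) one has $\tau^{-1}(\Gm)\cong M_{dR}(C;r,\ov{d}p)\times\Gm$, which by Lemma \ref{nonempty} is integral of dimension $n+1$ and smooth over $\Gm$. Hence the locus $\Sigma\subseteq M_{Hod}$ where $M_{Hod}$ fails to be regular, or where $\tau$ fails to be flat, is closed and is contained in the fibre $\tau^{-1}(0)=M_{Dol}(C;r,\ov{d}p)$.

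The morphism $\tau$ and the space $M_{Hod}$ carry compatible $\Gm$-actions, so $\Sigma$ is $\Gm$-stable. Moreover the $\Gm$-action on $M_{Hod}$ is \emph{contracting}: by Theorem \ref{hohiisproper} the Hodge-Hitchin morphism $h_{Hod}$ is projective onto $A(\FC;r)\times\Aa^1_\field$, on which $\Gm$ acts with strictly positive weights, so every orbit there has a limit at $\lambda\to 0$ (the origin); applying the valuative criterion to the proper morphism $h_{Hod}$, the orbit map $\Gm\to M_{Hod}$, $\lambda\mapsto\lambda\cdot x$, extends over $0$ for every $x\in M_{Hod}$, and $\lim_{\lambda\to 0}\lambda\cdot x$ lies in $M_{Hod}^{\Gm}\subseteq\tau^{-1}(0)=M_{Dol}$. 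Consequently, if $\Sigma\ne\emptyset$ then $\Sigma$ meets $M_{Hod}^{\Gm}$, so it suffices to show that $M_{Hod}$ is regular of dimension $n+1$ and that $\tau$ is submersive at each $\Gm$-fixed Higgs bundle $q=(0,E,\phi)$. At such a $q$ there is an exact sequence $0\to T_qM_{Dol}\to T_qM_{Hod}\xrightarrow{d\tau_q}\field$, and $\dim T_qM_{Dol}=n$ because $M_{Dol}$ is nonsingular; hence it is enough that $d\tau_q$ be surjective and $\dim_qM_{Hod}=n+1$, that is, that the Higgs bundle $(E,\phi)$ deform unobstructedly to a $t$-connection over the Hodge line.

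This last point is the heart of the matter and is the step I expect to be the main obstacle. I would obtain it from the deformation theory of $t$-connections on the curve $C$: under $(r,\ov{d}p)=1$ stability coincides with semistability, so $M_{Hod}$ is, \'etale-locally, the quotient by a free $\mathrm{PGL}$-action of the parameter scheme of framed stable $t$-connections (\ci{si mo I, si mo II, la-2014}); since a connection on a curve is automatically integrable, this parameter scheme is, over the open locus where it is non-empty, a torsor under a vector bundle over an open subscheme of $\Aa^1\times\mathrm{Bun}_r(C)$, whence it is smooth of the expected dimension with $\tau$ submersive, the non-emptiness in characteristic $p$ being governed by \ci[Pr. 3.1]{bi-su}. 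Alternatively, one may directly invoke the flatness of $h_{Hod}$ (equivalently, of $\tau_{Hod}$) established in \ci{de-zh B}, which together with Lemma \ref{nonempty} yields smoothness of $\tau$ at once and bypasses the $\Gm$-contraction argument. In either case, granting this, $\Sigma=\emptyset$, so $\tau$ is flat, hence smooth (flat with smooth fibres), and $M_{Hod}$ is regular of pure dimension $n+1$. Finally $\tau^{-1}(\Gm)\cong M_{dR}(C;r,\ov{d}p)\times\Gm$ is irreducible, and by flatness no component of $M_{Hod}$ lies over $0$; hence $\tau^{-1}(\Gm)$ is dense in $M_{Hod}$, and since $M_{Hod}$ is regular its irreducible components are pairwise disjoint, forcing $M_{Hod}$ to be irreducible, hence integral. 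Together with Lemma \ref{nonempty}, this shows that $\tau_{Hod}(C;r,\ov{d}p)$ is a smooth surjective fibration with connected fibres onto $\Aa^1_\field$ and that $M_{Hod}(C;r,\ov{d}p)$ is integral and nonsingular.
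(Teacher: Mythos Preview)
Your reduction to the flatness of $\tau$, the use of the trivialization over $\Gm$, and the $\Gm$-contraction argument (granted Theorem \ref{hohiisproper} via the external references \ci{de-zh B,la-2021}, not via Proposition \ref{hh is proper}, which itself uses the result you are proving) are all reasonable. The genuine gap is exactly where you flag it: the ``heart of the matter,'' namely that $d\tau_q$ is surjective at every $\Gm$-fixed Higgs bundle $q=(0,E,\phi)$. Your first-order deformation argument amounts to producing, for each such $q$, a lift of $\phi$ to a $t$-connection over $\field[\epsilon]$; writing $\nabla_t=\phi+t\psi$, this forces $\psi$ to be a genuine connection on $E$. But the underlying bundle $E$ of a stable (even $\Gm$-fixed) Higgs bundle need \emph{not} admit a connection: by \ci{bi-su} this requires every indecomposable summand of $E$ to have degree divisible by $p$, and nilpotent Higgs bundles built from filtrations by line bundles of degrees not divisible by $p$ violate this. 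So the ``torsor under a vector bundle over an open of $\Aa^1\times\mathrm{Bun}_r(C)$'' picture is not available in general at the relevant fixed points, and allowing $E$ to deform as well does not obviously rescue the tangent computation. Your fallback of citing ``flatness of $h_{Hod}$ established in \ci{de-zh B}'' is not something the paper attributes to that reference (only properness is), so as written this alternative also does not close the gap.

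The paper's proof proceeds quite differently and avoids deformation theory altogether. It exhibits a \emph{single} explicit section of $\tau$ through the special fibre: take a stable vector bundle $E$ of rank $r$ and degree $\ov dp$ (these exist), which by \ci{bi-su} admits a connection $\nabla$, and form the $t$-connection $(\mathcal E,x\nabla)$ over the localization $A$ of $\Aa^1_\field$ at $0$. This shows $\ov{M_\alpha}\cap M_0\neq\emptyset$; then equidimensionality and irreducibility of the fibres force $\ov{M_\alpha}=M_{A,\mathrm{red}}$, whence flatness of the reduction over the regular $1$-dimensional base, and a short direct argument rules out nilpotents in $M_A$. The upshot: rather than proving submersion at \emph{every} fixed point, one only needs a single degenerating family hitting the special fibre, and the global geometry (irreducible equidimensional fibres over a DVR) does the rest.
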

\begin{proof} We drop some decorations. In particular, let us simply write $\tau:M\to \Aa^1_\field.$ 
Since the fibers of $\tau$ are smooth (Lemma \ref{nonempty}), in order to prove that $\tau$ is smooth, it is enough to prove that $\tau$ is flat.
Once $\tau$ is smooth, the smoothness and integrality  of $M$ follow from the flatness of $\tau$ and the smoothness and integrality 
of the target and  of the  fibers of $\tau.$ 

We know that the fibers of  $\tau$ are nonsingular, integral and  of  dimension (\ref{la dim e}) (Lemma \ref{nonempty} and (\ref{prfr})).
However, off the bat, we are unaware of an evident reason why $M$ should be irreducible, or even reduced. 

We know that $\tau$ is flat over ${\Gm}_{,\field} \subseteq \Aa^1_\field$ by virtue of the trivialization (\ref{prfr}).
We need to verify that $\tau$ is flat over the origin. 
This is a local question near the origin $0 \in \Aa^1_\field$.

Let $A:={\rm Spec} (\field [x]_{(x)})$  (Hitchin bases, typically also denoted by $A$ in this paper, do not appear in this proof)  be the spectrum of the local ring of $0 \in \Aa^1_\field$ and let $\tau_A: M_A \to A$ be the base change of 
$\tau$ via $A \to \Aa^1_\field.$ We need to show that $M_A/A$ is flat. 

The scheme $M_A$ universally corepresents suitable equivalence classes of semistable
$t$-connections  on $A\times C.$

Note that $\tau_A$ is surjective, hence dominant. 
 Let $0$ and $\alpha$ be the closed and open points in $A$, respectively. Let $(M_A)_0=M_0$ and $(M_A)_\alpha=M_\alpha$
 be the corresponding fibers.

{\bf CLAIM 1}: {\em  We have $\ov{M_\alpha} \cap (M_A)_0 \neq \emptyset.$}
Let $E$ be a rank $r$ and degree $\ov{d}p$ stable vector bundle on $C$ (there are such bundles  since  their moduli space is
an irreducible nonsingular variety of positive dimension one half of (\ref{la dim e})).  The stable bundle $E$ is indecomposable \ci[Cor 1.2.8]{hu-le}.
By \ci[Prop.3.1]{bi-su} the vector bundle $E$ admits flat connections $\nabla$. Let $(\m{E}, x\nabla)$ be the $t$-connection
on $A\times C$ obtained by  pulling back $(E,\nabla)$ via the projection onto $C$  and by twisting the connection
by the function $x$ on $A.$ By \ci[Prop. 1.3.7]{hu-le}, we have that $\m{E}$, being stable on the geometric fibers,   is a stable bundle 
on $A\times C$, so that $(\m{E}, x\nabla)$ is a stable $t$-connection on $A\times C.$ We thus have that $(\m{E}, x\nabla) \in M(A)$.
Then $(E, (x=0)\nabla)=(E,0) \in M(\field)$ is a specialization of the restriction of $(\m{E}, x\nabla)$ to the generic point of $A.$
This proves CLAIM 1.

{\bf CLAIM 2:} {\em We have $(\ov{M_\alpha})_0 = (M_A)_0.$}
The closure $\ov{ M_\alpha } $ is integral and it is a closed subscheme of $M_A.$
It follows that the  first fiber is a closed (and non-empty by CLAIM 1) subscheme of the integral nonsingular  second fiber.
By the upper-semicontinuity of the dimension of fibers at the source, the two fibers have the same dimension, hence they coincide
by the integrality of the second fiber. This proves CLAIM 2. 

{\bf CLAIM 3:} {\em We have the equality of integral schemes  $\ov{M_\alpha} = M_{A,red}$.}
The first is a  closed  and dense (CLAIM 2 implies they have the same geometric points) subscheme of the second, which is also integral.
CLAIM 3 is proved.

By \ci[III.9.7]{ha}, we have that $M_{A,red} \to A$, and thus $M_{red}\to \Aa^1_\field$, are flat.

It remains to show that $M_A$ is indeed reduced: Let $U$ be any nonempty affine open subset of $M_A$. Assume $f\in \Gamma(U,\mathcal{O}_{U})$ is a nonzero nilpotent element so that $f$ maps to $0\in \Gamma(U_{red},\mathcal{O}_{U_{red}})$. We have the factorization $f=x^N g$ where $g\notin (x)\cdot\Gamma(U,\mathcal{O}_{U})$. By CLAIM 3, we have that $U_{red}$ is integral. Therefore either $x$ or $g$ is nilpotent in $\Gamma(U,\mathcal{O}_{U})$. Since $M_A\to A$ is dominant, we have that $x$ is not nilpotent in $\Gamma(U,\mathcal{O}_{U})$. Thus $g$ is nilpotent. Since $g\notin (x)\cdot \Gamma(U,\mathcal{O}_{U})$, $g$ maps to a nonzero nilpotent element in the special fiber of $M_A$ over $A$, which contradicts the integrality of $(M_A)_0$.
\end{proof}

\subsection{Ad hoc proof of the properness of the Hodge-Hitchin morphism}\la{ad hocprp}$\;$

The purpose of this section is to give a proof   (Proposition \ref{hh is proper})  of the properness of the Hodge-Hitchin morphism (Theorem
\ref{hohiisproper})
in the cases we need in this paper. The proof is based on the application of the  following  rather general properness criterion, and is based on the knowledge that  the Hitchin and the de Rham-Hitchin morphisms are proper.
In some sense, we collate these two properness statements. On the other hand, this collation does not seem to be immediate; see Remark \ref{need hyp}.
We are very grateful to Mircea~Musta\c{t}\u{a} for providing us with a proof  of said criterion.
We are also very grateful to Ravi Vakil for pointing out some counterexamples to some overly optimistic earlier versions of this criterion.

\begin{pr}[{\bf (An ad hoc properness criterion)}]\la{ad hoc pr}
Let $ m\circ f: X\to Y \to T$ be  morphisms of schemes. We assume that
\ben
\item 
$X$ is quasi-compact and quasi-separated, and $Y$ is noetherian;
\item
$X$ and $Y$ are  integral, and $Y$ is normal;
\item
$f:X \to Y$ is separated, of finite type,  surjective and with  geometrically connected fibers;

\item
for every closed point $t \in T,$ the morphism $f_t: X_t \to Y_t$ obtained by base change is proper.
\een
Then $f$ is proper.
\end{pr}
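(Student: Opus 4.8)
The plan is to use the valuative criterion of properness. Since $f\colon X\to Y$ is already assumed separated and of finite type, I must show that for every valuation ring $R$ with fraction field $K$ and every commutative square consisting of $\operatorname{Spec} K\to X$ and $\operatorname{Spec} R\to Y$, there exists a unique lift $\operatorname{Spec} R\to X$. Uniqueness is automatic from separatedness of $f$, so the content is existence. By a standard reduction (replacing $R$ by a suitable extension), one may assume $R$ is a discrete valuation ring, or at least argue directly with an arbitrary valuation ring; I would keep $R$ a valuation ring to stay safe, but the key geometric input will come from a DVR-type argument via the integral normal structure of $Y$.

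First I would set up notation: let $y_0\in Y$ be the image of the closed point of $\operatorname{Spec} R$ and let $t_0\in T$ be its image in $T$; we may assume $t_0$ is a closed point after shrinking (or one reduces to this case by noetherian/limit arguments, though here $T$ is not assumed noetherian — so more care is needed: I would pass to the local ring of $T$ at a point and use that a valuation ring mapping in factors through the local ring, then further localize; the closed-point hypothesis in (4) should be leveraged by noting the image of the closed point of $\operatorname{Spec} R$ in $T$, call it $t$, and base-changing everything to $\operatorname{Spec}\mathcal{O}_{T,t}$, then to the residue field for the special fiber). The plan is: (a) pull back the whole situation along $\operatorname{Spec} R\to Y$ to get $X_R:=X\times_Y\operatorname{Spec} R\to\operatorname{Spec} R$, which is separated of finite type with a section over $\operatorname{Spec} K$ coming from $\operatorname{Spec} K\to X$; I must extend this section over $\operatorname{Spec} R$. (b) The generic point of $\operatorname{Spec} R$ maps to the generic point of $Y$ (since $\operatorname{Spec} R\to Y$ hits $y_0$ in the closure of the generic point and $Y$ is integral — actually one needs that $\operatorname{Spec} K\to Y$ lands in the generic point; if it lands in a proper closed subset one replaces $Y$ by the closure of that image, but then normality may fail — so I would instead use that $f$ is surjective with integral source to argue the generic fiber $X_{\eta(Y)}$ is non-empty and integral, reducing to the case where the image of $\operatorname{Spec} K$ is the generic point by a further reduction, or handle the general case by noting the closure $Z$ of the image is integral and $X\times_Y Z\to Z$ still satisfies the hypotheses except possibly normality — this is exactly the delicate point).

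The heart of the argument is then: over the generic point $\eta=\eta(Y)$, the fiber $X_\eta$ is integral (it is a dense open of integral $X$), and over the closed point $y_0$ we want to invoke hypothesis (4): $y_0$ lies over the closed point $t_0\in T$, and $f_{t_0}\colon X_{t_0}\to Y_{t_0}$ is proper. The closed point of $\operatorname{Spec} R$ maps into $Y_{t_0}$, and $X_{t_0}\to Y_{t_0}$ being proper means the special fiber $(X_R)_{s}$ over the closed point $s\in\operatorname{Spec} R$ is proper over $\operatorname{Spec}\kappa(s)$. Now I would use the following mechanism: $X_R\to\operatorname{Spec} R$ is of finite type, and I want to show it is proper, which would immediately give the required extension of the section (a proper scheme over $\operatorname{Spec} R$ with a section over $K$ — combined with separatedness — need not have the section extend unless $X_R\to\operatorname{Spec} R$ is proper, in which case the valuative criterion applied to $X_R/R$ with $R$ itself does the job since $\operatorname{Spec} R\to\operatorname{Spec} R$ is the identity and the lift exists by properness). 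So the real claim is: $X_R\to\operatorname{Spec} R$ is proper, given that its generic fiber is of finite type (need proper!) and its special fiber is proper. This is false in general without more — and this is precisely where I expect the main obstacle to lie, and why hypotheses (1),(2),(3) are needed: I would use that $X_R$ is integral-ish or at least that its generic fiber, being $X_{\eta(Y)}\times_{\eta(Y)}\operatorname{Spec} K$ where $K$ is an extension of the residue field... no: $\operatorname{Spec} R\to Y$ need not hit $\eta(Y)$ generically mapped to $K$ — let me re-think: the generic fiber of $X_R/R$ is $X\times_Y\operatorname{Spec} K$ along $\operatorname{Spec} K\to Y$, and if that lands in the generic point of $Y$, the generic fiber is a scheme over $K$ obtained by base change from $X_{\eta(Y)}$, still of finite type, and I need it proper — it is $X_{\eta(Y)}$ pulled back, and $X_{\eta(Y)}\to\operatorname{Spec}\eta(Y)$ is NOT claimed proper. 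So properness of $X_R/R$ must be gotten by combining the DVR-with-proper-special-fiber structure with connectedness/flatness: here I would invoke that by generic flatness and the hypotheses, one can arrange $X_R\to\operatorname{Spec} R$ flat (after base change) with proper special fiber; a flat finite-type morphism to a DVR with proper special fiber is proper by \cite[\href{https://stacks.math.columbia.edu/tag/0CL9}{0CL9}]{stacks-project}-type results, and flatness plus properness on the special fiber forces properness. Thus the plan's crux: reduce to $R$ a DVR, base-change $X_R$ so as to make it flat over $R$ (using that $Y$ normal noetherian integral and $X\to Y$ flat-ish over a big open — here one uses geometric connectedness of fibers together with normality to propagate properness via Stein-factorization-type arguments, or more simply uses that $X\to Y$ restricted over the DVR $\operatorname{Spec} R\to Y$ can be replaced by a flat model because $Y$ is normal and $R$ is a DVR so $\operatorname{Spec} R\to Y$ is "nice"), then conclude properness of the special fiber propagates. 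The single hardest step, which I would flag explicitly, is propagating properness from the closed fiber $X_{t_0}\to Y_{t_0}$ to the total morphism $f$ — controlling the generic behavior — and I expect this is exactly where geometric connectedness of the fibers of $f$ (hypothesis (3)) and normality of $Y$ (hypothesis (2)) are used together, most likely via a valuative-criterion argument on $Y$ showing any "limit point" of $X$ over a DVR in $Y$ must exist because it exists in the appropriate closed fiber and the connectedness prevents it from escaping to infinity — possibly formalized through the properness criterion for the associated relative Hilbert/Chow constructions, or through Grothendieck's "critère valuatif" combined with \cite[\href{https://stacks.math.columbia.edu/tag/0BX6}{0BX6}]{stacks-project} (properness is fpqc-local/checkable after faithfully flat base change on the target), reducing the problem to the DVR $R$ over $Y$, where it becomes the flat-model-with-proper-special-fiber statement above.
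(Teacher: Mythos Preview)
Your approach via the valuative criterion has a genuine gap at the step you yourself flag as the crux. After pulling back to $X_R\to\operatorname{Spec} R$, you want to conclude properness from ``flat plus proper special fiber.'' That implication is false in the generality you invoke: take $R$ a DVR, $X=\mathbb{P}^1_R\sqcup\mathbb{A}^1_K$ (disjoint union), which is flat, separated, of finite type over $R$ with special fiber $\mathbb{P}^1_k$, yet not proper. This is exactly the kind of example behind Remark~\ref{need hyp}, and it shows that the integrality hypothesis on $X$ must be used in an essential way---but your plan never explains how flatness of $X_R$ over $R$ is obtained (the base change $X_R$ need not be integral or flat), nor how integrality/connectedness would be fed into a DVR-local argument. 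The vague appeals to Stein-factorization-type arguments, fpqc descent, or Hilbert/Chow constructions do not fill this in; once you try to make them precise you are essentially reproving the proposition in the special case $Y=\operatorname{Spec} R$, which is no easier than the general case.

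The paper takes a completely different route. It first reduces to $T=Y$ (so the hypothesis becomes: every closed fiber of $f$ is proper), then chooses a Nagata compactification $X\hookrightarrow Z$ with $g\colon Z\to Y$ proper, and Stein-factorizes $g$ as $Z\to W\to Y$. Using the geometric connectedness of the fibers of $f$, the surjectivity of $f$, and the normality of $Y$, one shows that the finite part $u\colon W\to Y$ is a bijection (hence an isomorphism over a dense open, by normality), so the fibers $q^{-1}(w)$ are connected. Then for each closed $y\in Y$ the proper fiber $X_y$ is open and closed in the connected $q^{-1}(w)=g^{-1}(y)$, hence equal to it; since closed points of $Z$ map to closed points of $Y$ (as $g$ is closed), the boundary $Z\setminus X$ has no closed points and is therefore empty. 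This argument uses \emph{global} compactification and the topology of $Z$, not the valuative criterion; the hypotheses (2) and (3) enter precisely in controlling the Stein factorization, which is the step your plan is missing.
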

\begin{proof}
Let $y \to Y$ be a closed point. The fiber $f^{-1}(y)=X_y \to y$ is proper, as it is the fiber over $y$ of the morphism $f_t: X_t \to Y_t,$ with $t:= m(y).$
It follows that it is enough to prove the Proposition when $m:Y\to T$ is the identity morphism.   
We assume we are in that case.

We have the following commutative diagram
\beq\la{nag st}
\xymatrix{
X \ar[d]_-j \ar[rrrr]^f  \ar[rrrrd]^(.75){h} &&&& Y 
\\
Z \ar[rrrr]^-q \ar[urrrr]^(.65){g} && &&  W \ar[u]_-u
}
\eeq
where:   $(j,g)$ is a Nagata-Deligne completion (\ci{conrad}) of the morphism $f$, i.e. $j$ is  an open and dense immersion and $g$ is proper;
we can and do choose $Z$ to be integral; $(q,u)$ is the Stein Factorization 
\cite[\href{https://stacks.math.columbia.edu/tag/03H0}{03H0}]{stacks-project} of $g$, so that $q$ has geometrically connected fibers and $u$ is finite. Note that $W$ is integral,   that
$g, q$ are  surjective, and that $u$ is finite and surjective. 

By \ci[Lemma 4.4.2]{de-ha-li}  (this is stated for the case when $W$ and $Y$ are varieties over an algebraically closed field; however 
the proof works also in our situation, where $W$ is integral, and $Y$ is integral and noetherian), there is a canonical factorization 
\beq\la{can ftz}
\xymatrix{
u= s\circ i: W \ar[r]^-i &  W' \ar[r]^-s & Y,
}
\eeq with $i$ finite radicial (hence a universal monomorphism) and surjective  (hence a  a universal bijection),  and $s$ is finite, surjective, separable (\ci[Definition 4.4.1]{de-ha-li}) and generically \'etale.

Our goal is to prove that $u$ is bijective, i.e. that $s$ is bijective. If this were the case, then we would be done as follows. 
Since $Y$ is quasi-compact, and $g$ is proper, by \cite[\href{https://stacks.math.columbia.edu/tag/04XU}{04XU}]{stacks-project}, we have that $Z$ is quasi-compact. 
Therefore the closed subspace $Z\setminus X$ is also quasi-compact, thus, by \cite[\href{https://stacks.math.columbia.edu/tag/005E}{005E}]{stacks-project},  if $Z\setminus X$ is nonempty, then $Z\setminus X$ has a closed point.  
Now let $y\in Y$ be a closed point and let $w \in W$ be its unique pre-image via $u$.
Then $j(f^{-1}(y))$ is open in $q^{-1}(w)$,
but it is also closed since $X_y=f^{-1}(y)$ is proper over $y$ by assumption. The connectedness of $q^{-1}(w)$ implies that
set-theoretically $j(f^{-1}(y))$ equals  $q^{-1}(w)$, i.e., $j$ induces a bijection $f^{-1}(y) \to g^{-1}(y)$.
Since this is true for every closed point $y\in Y$, and since $g$ is proper, we see
that $j$ induces a bijection between the closed points of $Z$ and the ones of $X$.
Therefore $Z\setminus X=\emptyset$, thus $Z=X$, i.e., our contention that $f$ is proper holds true.

We are left with proving that $s$ is bijective. Note that the formation of the canonical factorization (\ref{can ftz}) is compatible with restrictions to open subsets in $Y.$ Since $W$ and $Y$ are integral, $Y$ is normal, $s$ is finite, and a finite birational morphism from an integral scheme to an integral and normal scheme is an isomorphism \cite[\href{https://stacks.math.columbia.edu/tag/0AB1}{0AB1}]{stacks-project},
it is enough to show that $s$ is an isomorphism over a Zariski dense open subset $U$ of $Y.$ The remainder of the proof is dedicated to proving this assertion.

Note that $h$ is dominant. 
Since the  image $\im (h)$  is constructible and dense,  it contains a Zariski dense open subset $V \subseteq W.$
Then $u(W \setminus V)$ is a proper closed subset of $Y$ with open and dense complement  which we denote by $U$. Then $h$ is surjective over the open dense $u^{-1}(U)$. 
It follows that,  in view of proving that $s$ is an isomorphism, it is enough (as seen above)  to prove it when $h$ is surjective, which we assume hereafter.

For any closed point $w\in W$, by the connectedness of the fibers of $f$ and the surjectivity of $h$, we have that, set-theoretically, $h(f^{-1}(u(w)))$ is contained in the same connected component of $u^{-1}(u(w))$ as $w$, and also contains $u^{-1}(u(w))$. Therefore, as a scheme,  $u^{-1}(u(w))$ is connected and it is  finite over the residue field of $u(w)$.
By \cite[\href{https://stacks.math.columbia.edu/tag/00KJ}{00KJ}]{stacks-project}, we have that $u^{-1}(u(w))$, as a set, is a singleton.
We thus have that $h^{-1}(w)=f^{-1}(u(w))$.
As seen above, $j(h^{-1}(w))$ is then open and closed in the connected $q^{-1}(w).$

As seen above, this implies that $j$ is an isomorphism and then $f=g$ is proper with geometrically connected fibers. 
Since geometrically connected schemes are universally connected \cite[\href{https://stacks.math.columbia.edu/tag/054N}{054N}]{stacks-project}, we have that $s$ is separable and universally bijective. 
By \cite[\href{https://stacks.math.columbia.edu/tag/01S4}{0154}]{stacks-project}, we have that $s$ is an isomorphism (recall we shrunk $Y$ to $U$).
But then $s: W' \to Y$ is an isomorphism over $U$, and this concludes our proof.
\end{proof}

\begin{rmk}\la{need hyp}
The case when  $f$ is the normalization of a nodal curve, with a point removed from the domain, and $m$ is the identity, shows that normality  cannot be dropped from the list of assumptions in Proposition \ref{ad hoc pr}.
The case when $X$ the disjoint union of a line and a line without the origin, with $f$ the natural morphism to  a line, with $m$ the identity, shows that the irreducibility of $X$ cannot be dropped. The case of  $f$ the square map ${\Gm}_{,\field} \setminus \{-1\} \to {\Gm}_{,\field}$ (say $char (\field) \neq 2$)  
and $m$ the identity, shows that the connectedness of the fibers cannot be dropped.
\end{rmk}

\begin{pr}[{\bf (Hodge-Hitchin is proper)}]\la{hh is proper}
Let $C/\field$ be a smooth curve (\S\ref{rino}) over an algebraically closed field $\field$ of characteristic $p>0.$
Let $\ov{d} \in \zed$ and assume that ${\rm g.c.d.} (r, \ov{d}p)=1.$
The  Hodge-Hitchin morphism $h_{Hod}(C;r,\ov{d}p)$ (\ref{mz2})  is projective.
\end{pr}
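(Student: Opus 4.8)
The plan is to deduce the properness — hence, given the quasi-projectivity of the source, the projectivity — of $h_{Hod}(C;r,\ov{d}p)$ from the ad hoc properness criterion of Proposition \ref{ad hoc pr}, applied to the composition
\[
M_{Hod}(C;r,\ov{d}p) \xrightarrow{\,h_{Hod}\,} A(\FC;r)\times_\field\Aa^1_\field \xrightarrow{\,\mathrm{pr}_2\,} \Aa^1_\field ,
\]
that is, with $X = M_{Hod}(C;r,\ov{d}p)$, $Y = A(\FC;r)\times_\field\Aa^1_\field$, $T = \Aa^1_\field$, $f = h_{Hod}$ and $m = \mathrm{pr}_2$. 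Since $Y$, being a vector bundle over $\mathrm{Spec}\,\field$ times $\Aa^1_\field$, is an affine space over $\field$, and since $M_{Hod}(C;r,\ov{d}p)$ is quasi-projective over $\field$, the morphism $h_{Hod}$ is a quasi-projective morphism; so once it is shown to be proper it is automatically projective.

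First I would dispose of hypotheses (1) and (2) of Proposition \ref{ad hoc pr}: $X = M_{Hod}(C;r,\ov{d}p)$ is a quasi-projective $\field$-variety, hence noetherian (so quasi-compact and quasi-separated), and it is integral by Proposition \ref{t is smooth}; while $Y$ is an affine space over $\field$, hence noetherian, integral and regular, in particular normal. Next I would check hypothesis (4), which is where the two known properness statements are invoked. The closed points of $T = \Aa^1_\field$ are the origin and the closed points of $\Gm\subseteq\Aa^1_\field$. Over the origin, by the identification (\ref{d100m}) the base-changed morphism $f_0$ is the Hitchin morphism $h_{Dol}(C;r,\ov{d}p)$ post-composed with the relative Frobenius $Fr_{A(C;r)/\field}$; the former is projective (the Hitchin morphism in the coprime case, cf. \ci[Th. 6.1]{nitsure} together with the quasi-projectivity of its source), the latter is finite, so $f_0$ is proper. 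Over a closed point $\lambda\in\Gm$, the trivializations (\ref{prfr}) and (\ref{hh91}) identify $f_\lambda$ with the de Rham-Hitchin morphism $h_{dR}(C;r,\ov{d}p)$, which is projective by Lemma \ref{nonempty}. Hence hypothesis (4) holds.

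The remaining hypothesis (3) — that $h_{Hod}$ be separated, of finite type, surjective and have geometrically connected fibers — is the one I expect to demand the most care. Separatedness and finite type are clear, $h_{Hod}$ being a morphism of quasi-projective $\field$-varieties. For surjectivity and geometric connectedness of the fibers I would work fiberwise over $\Aa^1_\field$: over $\Gm$ one has $h_{Hod}|_{\Gm} = h_{dR}(C;r,\ov{d}p)\times_\field\mathrm{Id}_{\Gm}$ by (\ref{hh91}), while over the origin $h_{Hod}$ restricts to $Fr_{A(C;r)/\field}\circ h_{Dol}(C;r,\ov{d}p)$ by (\ref{d100m}); since surjectivity and geometric connectedness of fibers are stable under base change of the target, it suffices to know them for $h_{Dol}(C;r,\ov{d}p)$, for $h_{dR}(C;r,\ov{d}p)$ and for $Fr_{A(C;r)/\field}$. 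The Hitchin morphism $h_{Dol}(C;r,\ov{d}p)$ is surjective with geometrically connected fibers — by its properness, the normality of its smooth target, and the geometric connectedness of its generic fiber (the Jacobian of a nonsingular spectral curve, \ci[Prop. 3.6]{be-na-ra}), via Stein factorization, exactly as in the proof of Lemma \ref{nonempty}. The de Rham-Hitchin morphism $h_{dR}(C;r,\ov{d}p)$ has the same properties, being \'etale locally equivalent over $A(\FC;r)$ to $h_{Dol}(\FC;r,\ov{d})$ (\ci{gr-2016}) and \'etale base change on the target not altering the geometric fibers. Finally, $Fr_{A(C;r)/\field}$ is a finite universal homeomorphism, so composing $h_{Dol}(C;r,\ov{d}p)$ with it preserves both properties. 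With all four hypotheses verified, Proposition \ref{ad hoc pr} yields that $h_{Hod}$ is proper, hence projective. The main obstacle, as Remark \ref{need hyp} makes clear, is precisely that one cannot ``glue'' the properness of $h_{Dol}$ and of $h_{dR}$ directly: it is the integrality of $M_{Hod}$ (Proposition \ref{t is smooth}) together with the geometric connectedness of all the fibers of $h_{Hod}$ — not merely those over closed points — that makes the collation go through.
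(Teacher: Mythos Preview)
Your proof is correct and follows essentially the same route as the paper's: apply the ad hoc properness criterion (Proposition \ref{ad hoc pr}) to the factorization $M_{Hod}\to A(\FC)\times\Aa^1_\field\to\Aa^1_\field$, verifying integrality of $M_{Hod}$ via Proposition \ref{t is smooth}, properness of the fiberwise morphisms via the known properness of $h_{Dol}$ and $h_{dR}$, and geometric connectedness of the fibers via the BNR description of the generic Hitchin fiber together with the \'etale-local equivalence of $h_{dR}(C;r,\ov{d}p)$ and $h_{Dol}(\FC;r,\ov{d})$. The only cosmetic differences are in the order of verification and in the references cited for the properness of $h_{dR}$ (you invoke Lemma \ref{nonempty}, the paper cites \ci[Cor.~3.47]{gr-2016} directly).
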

\begin{proof} 
We drop some decorations.
Since domain and target are quasi projective, it is enough to prove the properness of $h_{Hod}.$
Recall (\ref{d100m}) that for $t=0 \in \Aa^1_k$,  the morphism $h_{Hod,0}$ is the Hitchin morphism composed with
the relative Frobenius $Fr_A$ (a universal homeomorphism) of the Hitchin base.  In view of (\ref{hh91}), for  $t \in {\Gm}_{,\field}(\field),$ the morphism
$h_{Hod,t}$ is isomorphic to the morphism $h_{dR}.$ 

We wish to apply  Proposition \ref{ad hoc pr} with $m\circ f: X \to Y \to T$ given by $\tau = {\rm pr}_{\mathbb A^1_\field} \circ h_{Hod}$
(\ref{mz2}).
In order to do so, we need to verify that the  hypotheses (1-4) are met in our setup. 

(1) is clear.  As to (2), we argue as follows.
By Proposition \ref{t is smooth},  $X:= M_{Hod}$ is integral, $Y:= A(C^{(1)})\times \mathbb A^1_\field$ is integral and normal (in fact nonsingular).

As to (3), we need to establish the surjectivity of $f=h_{Hod}$, and the geometric connectedness of its fibers.
The morphism $h_{Hod}$ is surjective; in fact, according to the proof of Lemma
\ref{nonempty}:  over the origin $0 \in \Aa^1_\field,$ the Hitchin morphism is 
 surjective (and proper), and so is $Fr_A$;
over ${\Gm}_{,\field}$ the surjectivity follows from the trivialization  (\ref{hh91}) and the  surjectivity of
 (the proper) $h_{dR}$.  

Let us argue that the morphism  $h_{Hod}$ has geometrically connected fibers. It is enough to prove that for every closed point $t \in \Aa^1_\field$,
$h_{Hod,t}$ has geometrically connected fibers. In view of the trivialization (\ref{hh91}), we need to prove this only for $t=0$, where we get the  Hitchin morphism composed with $Fr_A$,
and for $t=1$, where we get the Hitchin-de Rham morphism. 
The  fibers of the Hitchin morphism  are geometrically connected
by Zariski Main Theorem (so that so are the fibers of its composition with $Fr_A$): domain and target are nonsingular integral and the general fibers are connected (Jacobians of nonsingular connected projective spectral curves; cf \ci[Prop. 3.6]{be-na-ra}). 
As seen in the proof of Lemma \ref{nonempty}, the fibers of the de Rham-Hitchin morphism for $C$  in degree $\ov{d}p$,
are isomorphic to the fibers of the Hitchin morphism for the Frobenius twist $\FC$ in degree $\ov{d}$, and are thus also geometrically connected.
This concludes the verification that hypothesis (3)  holds.

The morphisms $f_t= h_{Hod,t}$ are:  for $t=0$ (\ref{d100m}),  the Hitchin morphism  composed with $Fr_A$;
for $t=1$ (\ref{d11}), the   de Rham-Hitchin morphism; for $t\neq 0$, isomorphic to the de Rham-Hitchin morphism
in view of the trivialization (\ref{hh91}).
The Hitchin morphism is   proper (\ci{fa, nitsure, si mo II}).  The relative Frobenius morphism $Fr_A$ is finite,
hence proper.
The de Rham-Hitchin morphism 
is proper by \ci[Cor. 3.47]{gr-2016}.
It follows  that hypothesis (4) holds as well.

We are now in the position to apply  Proposition \ref{ad hoc pr} and conclude.
\end{proof}

\section{Cohomological Simpson Correspondence  in positive characteristic}\la{the mainz}$\;$

{\bf Assumptions in \S\ref{the mainz}.} In this section, we place ourselves in  the following special case of the set-up in \S\ref{rino}:
$C= C/\field$ is  a smooth curve over an algebraically closed field $\field$ of positive characteristic $p>0$,
the degree $d=\ov{d} p$ is an integer multiple of the characteristic and ${\rm g.c.d.} (r,d)=1.$ 
At times, we drop some decorations.

{\bf The three main results in this \S\ref{the mainz}.}
We prove three main results. Theorem \ref{zmain}: a canonical cohomological version of the Simpson correspondence
between the moduli spaces of Higgs bundles and of connections. The perhaps surprising Theorem \ref{central fiber}
yielding a canonical  isomorphism between the cohomology rings of the moduli space of  connections and the moduli space of connection with nilpotent $p$-curvature tensor.  The perhaps even more surprising,
 especially when compared with the well-known and evident  ``additive periodicity" (\ref{gkgk4}), ``$p$-multiplicative periodicity,"
Theorem \ref{int cons} involving  the Frobenius twists of a curve.

{\bf The perverse Leray filtrations we use.}
The \'etale cohomology ring $H^* (M_{dR}(C), \oql)$  is filtered by 
 the perverse Leray filtration   $P^{h_{dR}(C)}$  (\ref{def plf}), 
associated with  the de Rham-Hitchin morphism $h_{dR}(C)$ (\ref{d11}).  
Similarly, we have 
 the perverse Leray filtration $P^{h_{Dol}(C)}$  (\ref{def plf}) on $H^* (M_{Dol}(C), \oql)$, associated with 
 the Hitchin morphism $h_{Dol}(C)$  (\ref{d100m}).  
 
 Since the relative Frobenius morphism $Fr_A$ in (\ref{d100m}) is finite, in view of 
(\ref{comp pt}),  we have that
\beq\la{the2co}
\mbox{
 $P^{h_{Dol}(C)}\, = \, P^{h_{Hod,0}(C)}$ \;\;on $H^*(M_{Dol}(C), \oql)$.}
\eeq

\subsection{A cohomological Simpson Correspondence in positive characteristic}\la{11scpc}$\;$

Recall that the moduli space $M_{dR}$ on the  r.h.s. of the forthcoming (\ref{maineq2}) is empty in characteristic zero.
The $M_{Dol}$ on the l.h.s. is non empty and lifts to characteristic zero.

\begin{tm}
[{\bf (Cohomological Simpson Correspondence $char(\field)=p>0$, I)}]
\la{zmain}
Let $C/\field$ and ${\rm g.c.d.}(r,d=\ov{d}p)=1$ be as in the beginning of \S\ref{the mainz} above. 

\n
There is a natural filtered isomorphism of  cohomology rings
\begin{equation}\label{maineq2}
\left( H^*(M_{Dol}(C;r,\ov{d}p), \oql),  P^{h_{Dol}} \right) \simeq
\left( H^*(M_{dR}(C;r,\ov{d}p), \oql), P^{h_{dR}} \right).
\end{equation}
\end{tm}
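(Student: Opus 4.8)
The plan is to reduce (\ref{maineq2}) to a single sheaf-theoretic statement on the common Hitchin-type base $A(\FC)$ and then take cohomology. Writing $Fr_A\colon A(C)\to A(\FC)$ for the finite, radicial relative Frobenius appearing in (\ref{d100m}), I aim to construct a canonical isomorphism in $D^b_{c}(A(\FC),\oql)$,
\begin{equation}\label{to ex}
(Fr_A)_*\big(h_{Dol,*}\,\oql\big)\ \simeq\ h_{dR,*}\,\oql ,
\end{equation}
compatible with cup products and with the perverse truncations $\ptd{\bullet}$. Granting (\ref{to ex}), applying $R\Gamma(A(\FC),-)$ gives on the left $H^*(M_{Dol}(C),\oql)$ filtered by $P^{h_{Hod,0}}$, which equals $P^{h_{Dol}}$ by (\ref{the2co}), and on the right $H^*(M_{dR}(C),\oql)$ filtered by $P^{h_{dR}}$; the two compatibilities then upgrade this into the asserted filtered ring isomorphism. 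So the entire content is (\ref{to ex}).

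To prove (\ref{to ex}) I would base change the factorization (\ref{mz2}) along the strict Henselization $S=(S,s,\eta,\ov\eta)$ of $\Aa^1_\field$ at the origin, obtaining $M_{Hod,S}\xrightarrow{\,h_{Hod,S}\,}\mathcal A_S:=A(\FC)\times_\field S\xrightarrow{\,pr_S\,}S$ with composite $\tau_{Hod,S}$; here $h_{Hod,S}$ is proper (Proposition \ref{hh is proper}) and $\tau_{Hod,S}$ is smooth (Proposition \ref{t is smooth}). Put $\mathcal G:=Rh_{Hod,S,*}\oql$ and let $i\colon\mathcal A_s=A(\FC)\hookrightarrow\mathcal A_S$. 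On one side: proper base change for $h_{Hod,S}$ gives $i^*\mathcal G=Rh_{Hod,0,*}\oql=(Fr_A)_*(h_{Dol,*}\oql)$ by (\ref{d100m}); and since $\tau_{Hod,S}$ is smooth, $\phi_{\tau_{Hod,S}}(\oql)=0$ by Fact \ref{fvz}(1), whence $\phi_{pr_S}(\mathcal G)=Rh_{Hod,0,*}\big(\phi_{\tau_{Hod,S}}(\oql)\big)=0$ by Fact \ref{fvz}(2). On the other side: $\eta$ lies over the generic point of $\Aa^1_\field$, which lies in $\Gm$, so the trivialization (\ref{hh91}) identifies $\mathcal G|_{\mathcal A_\eta}$ with the pullback $p_\eta^*(h_{dR,*}\oql)$ along $p_\eta\colon\mathcal A_\eta\to A(\FC)$; moreover the complex $p_S^*(h_{dR,*}\oql)$ on $\mathcal A_S$, being pulled back from the $\field$-scheme $A(\FC)$, is universally locally acyclic relative to $pr_S$, so $\phi_{pr_S}\big(p_S^*(h_{dR,*}\oql)\big)=0$ and $i^*\big(p_S^*(h_{dR,*}\oql)\big)=h_{dR,*}\oql$.

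Since $\phi_{pr_S}(\mathcal F)=0$ forces $i^*\mathcal F\xrightarrow{\,\sim\,}\psi_{pr_S}(\mathcal F)$ and $\psi_{pr_S}$ depends only on the restriction to $\mathcal A_\eta$, the two computations combine into canonical isomorphisms $(Fr_A)_*(h_{Dol,*}\oql)=i^*\mathcal G\xrightarrow{\,\sim\,}\psi_{pr_S}(\mathcal G|_{\mathcal A_\eta})=\psi_{pr_S}\big(p_\eta^*(h_{dR,*}\oql)\big)\xleftarrow{\,\sim\,}i^*\big(p_S^*(h_{dR,*}\oql)\big)=h_{dR,*}\oql$, which is (\ref{to ex}). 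Compatibility with cup products is inherited from the monoidality of the nearby/vanishing-cycle formalism (Remark \ref{comp cup}, \ci[\S4.3]{illusie}), of proper base change, and of $(Fr_A)_*$. Compatibility with $\ptd{\bullet}$ follows from the $t$-exactness of $\phi$ and of $\psi[-1]$ fixed in \S\ref{rem vn} (so vanishing/nearby cycles commute with perverse truncation, up to the standard shift), together with the $t$-exactness of the finite functor $(Fr_A)_*$ and of pullback along the separable extension $\field\subseteq k(\eta)$, after a routine bookkeeping of shifts; this, combined with (\ref{comp pt}) applied to the finite morphism $Fr_A$ — which is exactly (\ref{the2co}) — produces the filtered isomorphism.

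The main obstacle, and the reason this is not a direct specialization argument, is that $M_{Hod}$ is not proper over $\Aa^1_\field$, so the specialization morphism attached to $\tau_{Hod}$ need not even be defined (the phenomenon of \ci{de-2021}). The device above is designed to circumvent exactly this: every nearby/vanishing-cycle manipulation is carried out on the \emph{constant} base family $\mathcal A_S=A(\FC)\times_\field S\to S$, where it is harmless, while the only contact with the (non-proper) moduli spaces is made through the \emph{proper} morphism $h_{Hod,S}$ (via proper base change and Fact \ref{fvz}(2)) and through the trivialization (\ref{hh91}); the smoothness of $\tau_{Hod}$ (Proposition \ref{t is smooth}) is precisely what is needed to make the relevant vanishing cycles vanish. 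The one genuinely delicate point that remains is the shift bookkeeping in the perverse-truncation compatibility, for which the conventions of \S\ref{rem vn} (normalizing $\phi$ and $\psi[-1]$ to be $t$-exact) are tailored.
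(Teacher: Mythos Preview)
Your proposal is correct and follows essentially the same route as the paper: base change the Hodge--Hitchin factorization to the strict Henselization of $\Aa^1_\field$ at $0$, use properness of $h_{Hod}$ (Fact \ref{fvz}(2)) together with smoothness of $\tau_{Hod}$ (Fact \ref{fvz}(1)) to force $\phi_{pr_S}({h_{Hod,S}}_*\oql)=0$, and then use the trivialization (\ref{hh91}) over $\Gm$ to identify the nearby cycles with ${h_{dR}}_*\oql$, yielding the key isomorphism (\ref{to ex}) on $A(\FC)$. The only cosmetic difference is that the paper introduces the auxiliary family $\sigma=(h_{dR}\times{\rm Id}_{\Aa^1_\field})$ explicitly, whereas you phrase the same object as the ``constant'' pullback $p_S^*(h_{dR,*}\oql)$; these are identified by proper base change, so the two arguments coincide. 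One small simplification: once (\ref{to ex}) is an isomorphism in $D^b_c(A(\FC),\oql)$, applying the truncation functors $\ptd{\bullet}$ to both sides is automatic, so your appeal to $t$-exactness of $\psi[-1]$ and $\phi$ is not actually needed for the filtered statement; the only input beyond (\ref{to ex}) is (\ref{the2co}).
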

\begin{proof}
We drop some decorations. Recall that:  the Hodge-Hitchin morphism at $t=1 \in \Aa^1_\field$ coincides with the de Rham-Hitchin morphism,
i.e. 
 $h_{Hod,1}= h_{dR}(C)$ (\ref{d11}); the  Hodge-Hitchin morphism at $t=0 \in \Aa^1_\field$ coincides with the composition
 of $Fr_A\circ h_{Dol}$ (\ref{d100m}).
We apply the  formalism of vanishing and nearby cycles recalled in \S\ref{rem vn}, to the two morphisms
\beq\la{oq12}
\xymatrix{
\tau: M_{Hod}(C) \ar[rr]^-{h_{Hod}} && A(C^{(1)})\times \mathbb A^1_\field \ar[rr]^-{\rm \pi:= pr_2} && \mathbb A^1_\field,
\\
\sigma: M_{dR}(C) \times \mathbb A^1_\field \ar[rr]^-{h_{dR}\times {\rm Id}_{\mathbb A^1_\field}}  &&   A(C^{(1)}) \times \mathbb A^1_\field \ar[rr]^-{\rm \pi:= pr_2} &&  \mathbb A^1_\field.
}
\eeq
Note that the morphism  $\tau$ and $\s$ share the second link $\pi$.

We  take $S$ to be a strict Henselianization of the spectrum of the completion of the local ring of the point $i: 0 \to  \mathbb A^1_\field.$
By Lemma \ref{nonempty}  and Proposition \ref{t is smooth}, the morphisms $\s$ and $\tau$ are smooth.  
In particular, $\phi_\tau (\oql)=0$ and $\phi_\sigma (\oql)=0$ (cf. Fact \ref{fvz}),
 so that we have $\psi_\tau (\oql)=\oql$ on $M_{Dol}(C)=M_{Hod,0}(C)$, and $\psi_\sigma (\oql)=\oql$ on $M_{dR}(C).$

By Proposition \ref{hohiisproper}, the morphisms $h_{Hod}$ is proper.
Since the de Rham-Hitchin morphism $h_{dR}$ is proper,  the morphism $h_{dR}\times {\rm Id}_{\mathbb A^1_\field}$ is proper.
In particular, we have natural isomorphisms  in $D^b_{c}(A(C^{(1)}) \times 0, \oql)$ stemming from the proper base change isomorphisms
($i^* h_* = h_* i^*, \psi h_* = h_* \psi$)
\begin{align}\la{pi2}
\begin{split}
\xymatrix{
i^*  {h_{Hod}}_* \oql \simeq {h_{Hod,0}}_*\oql, &
i^* ( h_{dR}\times {\rm Id}_{\mathbb A^1_\field} ) _* \oql  \simeq {h_{dR}}_*\oql,
\\
{h_{Hod,0}}_*\oql  \simeq \psi_\pi  ({{(h_{Hod}}_{| {\Gm}_{,\field}})_*} \oql ), &
{h_{dR}}_*\oql \simeq \psi_\pi  (h_{dR}    \times {\rm Id}_{{\Gm}_{,\field}}  ) _*\oql.
}
\end{split}
\end{align}

By the trivializing isomorphism  (\ref{hh91}), we 
have a natural isomorphism between the two terms of type $\psi_\pi$ in (\ref{pi2}).
We thus  have a natural isomorphism in $D^b_{c} (A(C^{(1)}))$
\beq\la{to ex}
{h_{Hod,0}}_*\oql \simeq {h_{dR}}_*\oql.
\eeq

Ignoring the ring structure: the statement in   cohomology follows  by taking cohomology in (\ref{to ex}); the filtered refinement, follows from (\ref{the2co}).

As to the ring structure, we argue as follows.

Recall that to obtain the isomorphism (\ref{to ex}) we need to pass through three types of morphisms: firstly, the morphisms induced by $i^*\to \psi$; 
secondly, the morphisms induced by the base change morphism; and lastly, the morphism induced by the trivializing isomorphism (\ref{hh91}).
We need to show that all three types of morphisms above preserves cup products.

We now consider the first type. Note that the vanishing cycle functor preserves cup products (see e.g. \cite[\S4.3]{illusie}). Upon taking cohomology on $\mathbb{A}^1$, the morphism $i^*\to \psi$ induces the specialization morphism on stalks as defined in \cite[\href{https://stacks.math.columbia.edu/tag/0GJ2}{0GJ2}]{stacks-project}. By the description of the specialization morphism in terms of pulling back sections via $\bar{j}^*$ (\ref{jbar}) as in \cite[\href{https://stacks.math.columbia.edu/tag/0GJ3}{0GJ3}]{stacks-project}, we see that the morphism $i^*\to \psi$ preserves cup products.

To show that the second type of morphisms preserve cup products, we are reduced to show that a base change morphism of the form $i^* h_*\to h_* i^*$ preserves cup product. We can write the base change morphism as the composition $i^*h_*\to i^* h_* i_* i^*\xra{\sim} i^*i_* h_* i^*\to h_*i^*$, where the first morphism is induced by the unit morphism $id\to i_*i^*$ and the last by the counit $i^*i_*\to id$. It is easy to check that both preserve cup products.

Finally, the trivializing isomorphism (\ref{hh91}) is induced by an actual isomorphism (\ref{prfr}) of varieties, and it does preserves cup products.
\end{proof}

\begin{rmk}[\bf (Weights)]
If  the curve $C/\field$ is obtained by extensions of scalars from a curve over a finite field, then the isomorphism (\ref{maineq2}) is compatible with the Frobenius weights (see \ci[Thm. 6.1.13]{weil2}).
The same is also true for  the isomorphisms in the forthcoming Theorems \ref{central fiber}, \ref{int cons}, \ref{via pb}, \ref{vvooqq}, \ref{e4}, \ref{c cop} and \ref{c cop p}.
\end{rmk}

\subsection{Cohomology ring of the space of  connections with nilpotent \texorpdfstring{$p$-curvature}{p}}\la{fgt5}$\;$

The following Theorem \ref{central fiber}  is a somewhat unexpected and surprising consequence of Theorem \ref{zmain}.
This is because its analogue (\ref{re mo iso}) for the Dolbeault moduli space is well-known to experts and proved using the $\Gm$-equivariance
and properness
of the Hitchin morphism, whereas in the de Rham case, there is no natural non-trivial $\Gm$-action. In particular,  even ignoring the
filtrations and the ring structure, there seems to be no clear a priori reason why the isomorphism (\ref{nu2}) should hold additively.

{\bf The fiber $N_{dR}$.}
Let $C/\field$ and ${\rm g.c.d.}(r,d=\ov{d}p)=1$ be as  in \S\ref{the mainz}.
Let $N_{dR}(C;r,\ov{d}p)$ be the fiber over the origin $i_{o(1)}: o(1) \to A(\FC;r)$ of the de Rham-Hitchin morphism  $h_{dR}(C;r,d)$ (\ref{d11}). This is the moduli space of  those stable stable connections of rank $r$ and degree $d$  with nilpotent $p$-curvature Higgs field.
Let us drop $r$ and $d$ from the notation.

{\bf The filtration $P_{N_{dR}}$ on $H^*(N_{dR}, \oql)$.}
The inclusion of this fiber induces the  cohomology ring homomorphism $i_{o(1)}^* : H^*(M_{dR}) \to H^*(N_{dR}).$ The perverse $t$-structure  on $A(\FC)$ induces
a filtration $P$ on the cohomology of the fiber $\w{M_{dR}}$ of $h_{dR}$ over the strict localization $\w{o(1)}$  of $o(1).$ By proper base change,
restriction induces a cohomology ring  isomorphism $H^* (\w{M_{dR}},\oql) \simeq H^* (N_{dR},\oql)$, and, by transport of structure,
the latter cohomology group inherits the filtration,  denoted by $P_{N_{dR}}$, from the former (not to be confused with the perverse Leray filtration
induced by the morphism $N_{dR} \to o(1)$, which is trivial-shifted by the degree in  each cohomological degree).
We thus have that restriction induces a filtered morphisms of cohomology rings 
\beq\la{nu1}
\xymatrix{
i_{o(1)}^*: H^* \left(M_{dR}(C;r,\ov{d}p),\oql), P^{h_{dR}}\right) \ar[r] &
\left(H^* (N_{dR}(C;r,\ov{d}p),\oql), P_{N_{dR}}\right).
}
\eeq 

\begin{rmk}\la{spltz}
The Decomposition Theorem \ci[Thm. 6.2.5]{bbdg} (stated over $\comp$, but  valid over any algebraically closed ground field),  and  the construction of $P_{N_{dR}}$,  imply that one can split the perverse filtrations $P^{h_{dR}}$ and $P_{N_{dR}}$ compatibly with the restriction morphism $i_{o(1)}^*$, i.e. this latter is a direct sum morphism for the two filtrations split into direct sums. In particular, if $i_{o(1)}^*$ is an isomorphism, then it is a
filtered isomorphisms. Recall that isomorphisms that are filtered morphisms, may fail to be filtered isomorphism.
By replacing $``dR" with ``Dol"$, we see that the same holds for $P^{h_{Dol}}$ and $P_{N_{Dol}}$,
where $N_{Dol}$ is the fiber over $o \in A(C)$ of the Hitchin moprhism $h_{Dol}: M_{Dol}(C) \to A(C).$ 
\end{rmk}

Recall our assumptions \S\ref{the mainz}: 
 $C/\field$, $char (\field)=p>0,$  and ${\rm g.c.d.}(r,d=\ov{d}p)=1$.

\begin{tm}[{\bf (The cohomology ring of $N_{dR}$)}]\la{central fiber}
The morphism (\ref{nu1})   is a filtered isomorphism of cohomology rings
\beq\la{nu2}
\xymatrix{
i_{o(1)}^*: H^* \left(M_{dR}(C;r,\ov{d}p),\oql), P^{h_{dR}}\right) \ar[r]^-\simeq &
\left(H^* (N_{dR}(C;r,\ov{d}p),\oql), P_{N_{dR}} \right).
}
\eeq
\end{tm}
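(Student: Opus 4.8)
The plan is to derive Theorem \ref{central fiber} from the sheaf-theoretic isomorphism (\ref{to ex}), ${h_{Hod,0}}_*\oql \simeq {h_{dR}}_*\oql$ in $D^b_c(A(\FC),\oql)$, produced inside the proof of Theorem \ref{zmain}, together with the well-known Dolbeault counterpart of the desired statement. Recall from (\ref{d100m}) that $h_{Hod,0}=Fr_A\circ h_{Dol}$, where $Fr_A\colon A(C)\to A(\FC)$ is finite and a universal homeomorphism.

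First I would record what the ``restriction to the fibre over the origin'' does to the two complexes in (\ref{to ex}). Since $h_{dR}$ is proper (Lemma \ref{nonempty}), $h_{Dol}$ is proper (classical), and $Fr_A$ is finite, proper base change along $i_{o(1)}\colon o(1)\to A(\FC)$ gives canonical identifications $i_{o(1)}^*{h_{dR}}_*\oql\simeq H^*(N_{dR},\oql)$ and $i_{o(1)}^*{h_{Hod,0}}_*\oql\simeq H^*(N_{Dol},\oql)$; in the second one I use that $Fr_A$ is a universal homeomorphism, so that the fibre of $h_{Hod,0}$ over $o(1)$ is, topologically, the nilpotent cone $N_{Dol}=h_{Dol}^{-1}(o)$. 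The assignment $K\mapsto\bigl(H^*(A(\FC),K)\to i_{o(1)}^*K\bigr)$ is a natural transformation of functors on $D^b_c(A(\FC),\oql)$; evaluated on ${h_{dR}}_*\oql$ it returns the map $i_{o(1)}^*$ of (\ref{nu1}) (using $H^*(A(\FC),{h_{dR}}_*\oql)=H^*(M_{dR},\oql)$ by composition of derived direct images), and evaluated on ${h_{Hod,0}}_*\oql$ it returns, under $H^*(A(\FC),{h_{Hod,0}}_*\oql)=H^*(M_{Dol},\oql)$, the pullback $\iota^*\colon H^*(M_{Dol},\oql)\to H^*(N_{Dol},\oql)$ along the closed immersion of the nilpotent cone. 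Applying this natural transformation to the isomorphism (\ref{to ex}) then produces a commutative square whose two vertical arrows are isomorphisms (the left one is exactly the Simpson-correspondence isomorphism $H^*(M_{dR},\oql)\simeq H^*(M_{Dol},\oql)$ of Theorem \ref{zmain}), whose top arrow is the map $i_{o(1)}^*$ of (\ref{nu1}), and whose bottom arrow is the Dolbeault restriction $\iota^*$.

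Next I would invoke the well-known fact, recalled in the Introduction, that $\iota^*\colon H^*(M_{Dol},\oql)\to H^*(N_{Dol},\oql)$ is an isomorphism of cohomology rings -- this follows from the classical contracting $\Gm$-action on the proper, $\Gm$-equivariant Hitchin morphism $h_{Dol}$ together with the theory of weights (equivalently, from the $\Gm$-monodromicity of ${h_{Dol}}_*\oql$ on the affine space $A(C)$ and the contraction principle). By the commutative square just built, $i_{o(1)}^*\colon H^*(M_{dR},\oql)\to H^*(N_{dR},\oql)$ is then an isomorphism; it is a ring homomorphism, being the pullback along the closed immersion $N_{dR}\hookrightarrow M_{dR}$, hence an isomorphism of cohomology rings. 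For the filtered refinement (\ref{nu2}) I would not argue directly but appeal to Remark \ref{spltz}: the Decomposition Theorem splits ${h_{dR}}_*\oql$ into the (shifted) direct sum of its perverse cohomology sheaves, the filtrations $P^{h_{dR}}$ and $P_{N_{dR}}$ become the filtrations by partial sums of the induced direct-sum decompositions of $H^*(M_{dR},\oql)$ and $H^*(N_{dR},\oql)$, and $i_{o(1)}^*$ respects these decompositions summand by summand; an isomorphism that is a direct-sum morphism for compatible splittings of the two filtrations is automatically a filtered isomorphism.

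The step I expect to demand the most care is the bookkeeping that sets up the commutative square: one must check that the abstract isomorphism (\ref{to ex}) in $D^b_c(A(\FC),\oql)$ genuinely intertwines the two ``restriction to the origin'' maps, and that, after the proper-base-change identifications and the universal-homeomorphism identification through $Fr_A$, the induced map on the Dolbeault side is precisely the geometric pullback $\iota^*$ along $N_{Dol}\hookrightarrow M_{Dol}$ -- the one for which the $\Gm$-contraction statement is available. This is purely formal, resting only on the functoriality of proper base change and of the adjunction unit $\mathrm{id}\to (i_{o(1)})_*\,i_{o(1)}^*$, but it is where all the identifications have to be pinned down. The substantive, imported ingredient is the Dolbeault statement itself, which we treat as known.
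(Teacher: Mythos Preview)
Your proposal is correct and follows essentially the same route as the paper: apply the adjunction ${\rm Id}\to (i_{o(1)})_* i_{o(1)}^*$ to the isomorphism (\ref{to ex}) to obtain a commutative square, invoke the known Dolbeault contraction statement $H^*(M_{Dol})\simeq H^*(N_{Dol})$ for the bottom edge, and finish the filtered part via Remark \ref{spltz}. The paper makes the ``universal homeomorphism'' bookkeeping you flag explicit by introducing the scheme-theoretic fibre $[N_{Dol}]:=h_{Hod,0}^{-1}(o(1))$ and noting it has the same reduced structure as $N_{Dol}$, but this is exactly the point you identify.
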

\begin{proof}  We drop many decorations. We start by proving the forthcoming and seemingly well-known (cf. \ci[Thm. 1, for example]{he2015}) (\ref{re mo iso}), the proof of which   remains valid without restrictions
on rank, degree, nor  characteristic of the ground field.

Let $N_{Dol}$ be the fiber of the Hitchin morphism $h_{Dol}: M_{Dol} \to A(C)$ over the origin $i_o: o \to  A(C).$ 
The complex ${h_{Dol}}_* {\oql}_{M_{Dol}}$ is $\Gm$-equivariant for the natural  $\Gm$-action on $A(C)$
 (cf. the paragraph following (\ref{d100m})).
Since $h_{Dol}$ is proper, proper base change (pbc), coupled with  \ci[Lemma 4.2]{de-mi-mu}, implies that  the adjunction morphism
\beq\la{adj mo}
\xymatrix{
{h_{Dol}}_* {\oql}_{M_{Dol}}  \ar[r]  & {i_o}_* i_o^* {h_{Dol}}_* {\oql}_{M_{Dol}} \ar[r]^-{\simeq}_-{\rm pbc}  &{i_o}_* {h_{Dol}}_*
{\oql}_{N_{Dol}},
}
\eeq
induces an isomorphism.  By taking cohomology, this morphism induces the  restriction morphism in cohomology,
which is thus 
an isomorphism of cohomology  rings
\beq\la{re mo iso}
\xymatrix{
i_o^* : \left( H^*(M_{Dol}, \oql), P^{h_{Dol}} \right)    \ar[r]^-\simeq  & \left(H^*(N_{Dol},\oql), P_{N_{Dol}}\right).
}
\eeq
In view of Remark
\ref{spltz},
 this is also   a filtered isomorphism.

Recall  diagrams  (\ref{d11}) and (\ref{d100m}).
 Let $i_{o(1)}: o(1) \to A(C^{(1)})$ be the origin, so that $N_{dR}$ is the corresponding fiber of the de Rham-Hitchin morphism
$h_{dR}: M_{dR} \to  A(\FC)$. Let $Fr_{A(C)}^{-1}(o(1))$ be the fiber of $Fr_{A(C)}$ over $o(1);$ it is supported at
the origin $o \in A(C).$ The fiber  $h_{Dol}^{-1}(o)=N_{Dol}$ is a closed subscheme of the fiber $
[{N_{Dol}}]:= h_{Dol}^{-1} (Fr_{A(C)}^{-1}(o(1))) =  h_{Hod,0}^{-1} (o(1))$, and these two fibers have the same reduced structure, hence the same cohomology ring
(more precisely, identified by pull-back). In view of the isomorphism (\ref{re mo iso}), we have isomorphisms of cohomology rings $H^*(M_{Dol}, \oql) \simeq
H^*([{N_{Dol}}],\oql) \simeq H^*(N_{Dol},\oql)$.

By applying the adjunction morphism of functors  ${\rm Id} \to i_{o(1) *} i_{o(1)}^*$ to the isomorphism
(\ref{to ex}), which we recall induces an isomorphism of cohomology rings, we obtain the following 
commutative diagram of  morphisms of cohomology rings, 
where the vertical arrows are the restriction morphisms of cohomology rings,
and with the indicated three isomorphisms of cohomology rings
\beq\la{rti5}
\xymatrix{
H^*(M_{Dol}, \oql)  \ar[r]^-\simeq  \ar[d]^-\simeq& H^*(M_{dR}, \oql) \ar[d]
\\
H^*([{N_{Dol}}], \oql)  \ar[r]^-\simeq & H^*(N_{dR}, \oql).
}
\eeq
It follows that the fourth unmarked vertical arrow on the rhs, which  is the restriction morphism $i_{o(1)}^*$ in (\ref{nu2}),
is  an isomorphism of cohomology rings.

Finally, since we now know that $i_{o(1)}^*$ is an isomorphism,  and a filtered morphism (\ref{nu1}), Remark \ref{spltz} implies that $i_{o(1)}^*$ is a filtered isomorphism as predicated in (\ref{nu2}).
\end{proof}

\subsection{Cohomology ring of moduli spaces for a curve and its Frobenius twist}\la{c and c1}$\;$

Note that in the construction of the Frobenius twist $C^{(1)}:= C \times_\field \field$ of a $\field$-scheme, we can replace 
the field automorphism $fr_\field: \field \stackrel{\sim}\to \field$, $a \mapsto a^p$ with any of its integer powers and  obtain,
for  every integer $n \in \zed$,  the $n$-th iterated Frobenius twist  $C^{(n)}$  of $C.$  The curve $C$ and all its Frobenius twists have the same genus.

The following  ``multiplicative periodicity" result, involving the characteristic $p$ as a factor and the  Frobenius twists of $C$, is a simple, yet remarkable consequence of Theorems \ref{zmain},   \ref{central fiber}, and  
\cite[Cor.\;3.28]{gr-2016}. It allows to prove the forthcoming ``multiplicative periodicity result 
Theorem \ref{e4}, involving only the  curve $C$, and not its Frobenius twists.

Recall our assumptions \S\ref{the mainz}: 
 $C/\field$, $char (\field)=p>0,$  and ${\rm g.c.d.}(r,d=\ov{d}p)=1$. 
 
\begin{tm}[{\bf ($p$-Multiplicative periodicity with Frobenius twists)}]\la{int cons}$\;$

\n
Let $d=\w{d}p^m$, with $m \geq 0$ maximal.
We have  canonical isomorphisms of cohomology rings
\begin{equation}\la{km33}
\xymatrix{
H^*\left(M_{Dol}\left(C;r , \w{d}p^m \right),\oql\right)
&\cong &
    H^*\left(M_{Dol} \left(C^{(m)}; r, \w{d}\right),\oql\right),
    \\
H^*\left(M_{Dol}\left(C^{(-m)}; r, \w{d}p^m \right),\oql\right)
&\cong &
    H^*\left(M_{Dol} \left(C ;r , \w{d}\right),\oql\right);
    }
    \end{equation}
similarly,  if we replace $\w{d}$ with $\ov{d}$.

These isomorphisms are  filtered  isomorphisms for  the respective perverse Leray filtrations.
\end{tm}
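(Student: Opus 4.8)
The plan is to bootstrap from the characteristic-$p$ Simpson-type identifications already established, namely Theorem \ref{zmain} and its fiberwise refinement Theorem \ref{central fiber}, together with the comparison of de Rham-Hitchin data for $C$ with Hitchin data for the Frobenius twist $C^{(1)}$ from \cite[Cor.\;3.28]{gr-2016}. The key observation is that the de Rham-Hitchin morphism $h_{dR}(C;r,\ov d p)\colon M_{dR}(C;r,\ov d p)\to A(\FC;r)$ and the Hitchin morphism $h_{Dol}(\FC;r,\ov d)\colon M_{Dol}(\FC;r,\ov d)\to A(\FC;r)$ are not just \'etale-locally equivalent over $A(\FC;r)$ (as used in Lemma \ref{nonempty}) but in fact, by \cite[Cor.\;3.28]{gr-2016}, yield an isomorphism of pairs over $A(\FC;r)$ after suitable identification; I would extract from that reference the precise statement that identifies ${h_{dR}(C)}_*\oql$ with ${h_{Dol}(\FC)}_*\oql$ as objects of $D^b_c(A(\FC;r),\oql)$, compatibly with cup products. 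Chaining this with the isomorphism (\ref{to ex}), ${h_{Hod,0}(C)}_*\oql\simeq {h_{dR}(C)}_*\oql$, from the proof of Theorem \ref{zmain}, and with (\ref{the2co}) identifying $P^{h_{Dol}(C)}$ with $P^{h_{Hod,0}(C)}$, gives on cohomology a canonical filtered ring isomorphism
\beq
H^*\!\left(M_{Dol}(C;r,\ov d p),\oql\right)\;\simeq\;H^*\!\left(M_{Dol}(\FC;r,\ov d),\oql\right).
\eeq
This is exactly the first isomorphism of (\ref{km33}) in the case $m=1$.

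To pass from $m=1$ to general $m\ge 0$ I would iterate. Applying the $m=1$ statement to the curve $C^{(1)}$ in place of $C$ identifies $H^*(M_{Dol}(C^{(1)};r,\ov d p),\oql)$ with $H^*(M_{Dol}(C^{(2)};r,\ov d),\oql)$; here one uses that the Frobenius twist of $C^{(1)}$ is $C^{(2)}$, that $(C^{(1)})^{(1)}=C^{(2)}$ carries the same genus, and that the coprimality hypothesis is preserved since $(r,\ov d p)=1$ at each stage. However, a direct naive iteration compares degree $\ov d p^m$ on $C$ with $\ov d p^{m-1}$ on $C^{(1)}$, then $\ov d p^{m-2}$ on $C^{(2)}$, and so on, peeling one factor of $p$ at each step; after $m$ steps one lands on degree $\ov d$ on $C^{(m)}$, which is precisely the first line of (\ref{km33}). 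Concretely, I would state and prove the one-step lemma ``$H^*(M_{Dol}(D;r,\delta p),\oql)\simeq H^*(M_{Dol}(D^{(1)};r,\delta),\oql)$ as filtered rings, for any smooth curve $D/\field$ and any $\delta$ with $(r,\delta p)=1$,'' and then obtain (\ref{km33}) by composing $m$ instances of it along $C, C^{(1)},\dots,C^{(m-1)}$. Each composition is canonical because each constituent isomorphism is, so the resulting isomorphism is canonical.

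For the second line of (\ref{km33}), I would simply apply the first line with the curve $C^{(-m)}$ in place of $C$: its $m$-th Frobenius twist is $(C^{(-m)})^{(m)}=C$, and its degree $\ov d p^m$ maps to $\ov d$ on $C$, which gives exactly the stated isomorphism; the point is that the $(-m)$-th Frobenius twist is defined precisely so that applying $m$ absolute Frobenii returns $C$, as already noted at the start of \S\ref{c and c1}. The ``similarly if we replace $\w d$ with $\ov d$'' clause requires no new argument: the one-step lemma is stated for an arbitrary degree coprime (after multiplication by $p$) to $r$, so it applies verbatim whether or not the degree has been stripped of all its $p$-power factors, the only difference being bookkeeping of which power of $p$ one peels off. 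The filtered-ring compatibility propagates through the iteration because composition of filtered ring isomorphisms is a filtered ring isomorphism, and because at each step the relevant perverse Leray filtration on the Dolbeault side is, by (\ref{the2co}) and (\ref{comp pt}), insensitive to post-composition with the finite morphism $Fr_A$.

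\textbf{Main obstacle.} The step I expect to be most delicate is extracting from \cite[Cor.\;3.28]{gr-2016} a genuinely \emph{canonical} identification of the two direct-image complexes ${h_{dR}(C)}_*\oql$ and ${h_{Dol}(\FC)}_*\oql$ over $A(\FC;r)$ that is simultaneously multiplicative for cup products and compatible with the perverse $t$-structure, rather than merely an \'etale-local equivalence (which is all that Lemma \ref{nonempty} needed, and which would only give a non-canonical cohomology isomorphism). If \cite{gr-2016} supplies an actual isomorphism of the two families of spaces over $A(\FC;r)$ — as its statement about the ``Hitchin fibration for $C^{(1)}$'' and the ``de Rham-Hitchin fibration for $C$'' suggests — then multiplicativity and $t$-exactness compatibility are automatic, and the rest of the argument is the routine iteration above; if instead one only has a local torsor-type statement, one must upgrade it, e.g. by invoking the splitting afforded by the Decomposition Theorem exactly as in Remark \ref{spltz} to reduce the canonicity question to the generic fiber, where both sides are Jacobians of the same spectral curve.
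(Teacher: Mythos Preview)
Your reduction to the case $m=1$, the iteration argument, and the derivation of the second line of (\ref{km33}) from the first by replacing $C$ with $C^{(-m)}$ are all correct and match the paper exactly. The issue lies entirely in your $m=1$ step, and you have in fact put your finger on it in your ``Main obstacle'' paragraph.

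The reference \cite[Cor.\;3.28, Lm.\;3.46]{gr-2016} gives only an \'etale-local equivalence of $h_{dR}(C;r,\ov d p)$ and $h_{Dol}(\FC;r,\ov d)$ over $A(\FC;r)$; it does \emph{not} produce a global isomorphism of the two families, and hence does not directly furnish a canonical isomorphism ${h_{dR}(C)}_*\oql\simeq{h_{Dol}(\FC)}_*\oql$ in $D^b_c(A(\FC;r),\oql)$ compatible with cup products. Your suggested fallback via the Decomposition Theorem would at best show that the two complexes are abstractly isomorphic (same IC summands with the same multiplicities), but such an abstract isomorphism need not be multiplicative, and the reduction to the generic fibre does not help: over a smooth spectral curve the two fibres are both torsors under the same Jacobian, not canonically the Jacobian itself, so there is still no canonical identification there.

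The paper resolves this by routing through the fibres over the origin rather than the whole base. Over an \'etale neighbourhood $U$ of the origin $o(1)\in A(\FC;r)$ one \emph{does} have an actual isomorphism of the two fibrations (this is what \'etale-local equivalence means), and restricting it to the fibre over $o(1)$ yields an honest isomorphism of varieties $N_{Dol}(\FC;r,\ov d)\simeq N_{dR}(C;r,\ov d p)$; being induced by a morphism of varieties, this isomorphism is automatically multiplicative and, by construction of the filtrations $P_{N_{Dol}}$ and $P_{N_{dR}}$ in \S\ref{fgt5}, filtered. One then chains
\[
H^*(M_{Dol}(\FC,\ov d))\;\stackrel{(\ref{re mo iso})}{\simeq}\;H^*(N_{Dol}(\FC,\ov d))\;\simeq\;H^*(N_{dR}(C,\ov d p))\;\stackrel{(\ref{nu2})}{\simeq}\;H^*(M_{dR}(C,\ov d p))\;\stackrel{(\ref{maineq2})}{\simeq}\;H^*(M_{Dol}(C,\ov d p)),
\]
where the outer links are the restriction isomorphisms of Theorem \ref{central fiber} and its Dolbeault analogue (\ref{re mo iso}), and the last is Theorem \ref{zmain}. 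In other words, the role of Theorem \ref{central fiber} in this paper is precisely to bypass the global comparison you were attempting: it lets one replace $M_{dR}$ and $M_{Dol}$ by their central fibres, where the \'etale-local equivalence becomes a genuine isomorphism.
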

\begin{proof} We prove the statements for $\w{d}$. The same line of argument applies to $\ov{d}$.

Since $C$ can be any projective nonsingular curve of a fixed genus, by using  Frobenius twists,
we see that the two assertions are equivalent to each other. It is enough to prove  the one in the top row.
The case $m=0$ is trivial.
A simple induction on $m$ shows that  it is enough to prove the top row when  $m=1$.

We use the notation in the proof of  Theorem \ref{central fiber}. 
We recall that the two morphisms $h_{Dol}: M_{Dol}(\FC; r, \ov{d}) \to A(\FC;r)$ and $h_{dR}: M_{dR}(C; r, \ov{d}p) \to A(\FC;r)$
are \'etale locally equivalent over their common target $A(\FC;r)$; see \cite[Cor.\;3.28,\;Lemma 3.46]{gr-2016}.  This immediately implies that 
the two fibers over the origin $N_{Dol}(\FC; r, \ov{d})$ and $N_{dR}(C, \ov{d}p)$ are isomorphic as $\field$-varieties.
As in the proof of \cite[Cor.\;3.45]{gr-2016}, we choose a distinguished isomorphism between $h_{Dol}$ and $h_{dR}$ over an \'etale neighborhood $U$ over the origin of $A(C^{(1)},r).$ By taking the fiber of this isomorphism over the origin of $A(C^{(1)},r)$, we obtain
a cohomology ring isomorphism $\nu: H^*(N_{Dol})\xra{\sim} H^*(N_{dR})$. By the very construction of the filtrations $P_{N_{Dol}}$ and $P_{N_{dR}}$ in 
\S\ref{fgt5}, the isomorphism $\nu$ is filtered for $P_{N_{Dol}}$ and $P_{N_{dR}}$.

By invoking the appropriate results in parentheses, we have the following chain of canonical ring  filtered isomorphisms
(filtrations are omitted for typographical reasons).

\beq\la{kjqw}
\xymatrix{
H^*\left(M_{Dol} \left(\FC, \w{d}\right),\oql\right)  & \stackrel{(\ref{re mo iso})}\cong &
H^*\left(N_{Dol} \left(\FC, \w{d}\right),\oql\right) 
\\
&
 \stackrel{\mbox{\cite[3.28 and 3.46]{gr-2016}}}\cong
 & H^*\left(N_{dR} \left(C, \w{d}p\right),\oql\right) 
\\
& \stackrel{(\ref{nu2})}\cong & H^*\left(M_{dR} \left(C, \w{d}p\right),\oql\right)
\\
& \stackrel{(\ref{maineq2})}\cong  & H^*\left(M_{Dol} \left(C, \w{d}p\right),\oql\right).
}
\eeq
This proves the top row in (\ref{km33}).  
\end{proof}

\section{Cohomological equivalence of Hodge moduli spaces of curves}\la{omq2}$\;$

In \S\ref{the mainz}, we worked with a fixed curve $C/\field$ over an algebraically closed field $\field$ of characteristic $p>0$,
and, under certain conditions on $r$, $d$ and $p$,  we have  used the family $\tau: M_{Hod}(C) \to \Aa^1_\field$
to relate (the cohomology of)  $M_{Dol}$ and $M_{dR}$ in the same degree (Theorem \ref{zmain}).
We have also been able to relate    $M_{Dol}(C) $  and $M_{Dol}(C^{(-n)})$ when the degrees differ by a factor $p^n$
($p$-multiplicative periodicity with Frobenius twists Theorem \ref{int cons}).

In this section, we build on these results and, under certain conditions on $r$, $d$ and $p$,
we relate (the cohomology of) $M_{Dol}$ with fixed degree  for different curves  of the same genus
(Theorem \ref{vvooqq}), 
and with  different degrees  (Theorem \ref{e4}) differing by a factor power of $p$ for the same curve (hence for different curves of the same genus). 

This latter result is then lifted to characteristic zero, where, coupled with the Dirichlet Prime Number Theorem,
relates (the cohomology of) $M_{Dol}$ in different degrees (Theorem \ref{c cop}) for a curve (hence for different curves).
 The existence of such an isomorphism in cohomology is known, but the compatibility of the perverse filtrations is new.
 
This  result in characteristic zero is  then specialized back to characteristic $p>r$ (Theorem \ref{c cop p}), where it is new.

The main technical tool employed in this \S\ref{omq2}, and that has not been used in proving the results in \S\ref{the mainz}, is part of the compactification/specialization  package developed   \ci{de-2021} and generalized in part in \ci{de-zh B}. We summarize what we need in Proposition 
\ref{sp cptz}. In order to have access to this package, we need to establish the smoothness (Proposition \ref{rel smooth}) and  the properness
(Proposition \ref{prpbz})  of the morphisms we employ.

\subsection{Relative moduli spaces: smoothness and properness}\la{kj11}$\;$

In this subsection, we prove 
Proposition \ref{rel smooth}, i.e. the smoothness of the Hodge-moduli space $M_{Hod}(\csub) $ for  a projective smooth family
$\csub$ of curves  over a nonsingular base  curve $B$.
We also prove
Proposition \ref{prpbz}, i.e. the properness of the Hodge-Hitchin morphism 
for said family. 
 These two results are the relative-version over a  base curve  of Theorems \ref{t is smooth} and \ref{hh is proper}.
They are used in the proof of Theorem \ref{vvooqq}.  In fact, we only  need the specialization of these two results
to the case of the Dolbeault moduli space, where the properness of the Hitchin morphism  is well-known, while the smoothness assertion seems new, at least in positive characteristic.

\begin{pr}
[{\bf (Smoothness  of moduli over  a base)}]\label{rel smooth}
Let $\csub$ be a smooth curve (\S\ref{rino})  over a reduced  base  $\base$.

The following morphisms are smooth surjective and quasi projective
\ben
\item
$\ah_\base: M_{Hod}(\csub,r,\ov{d}p) \ra \base$; here ${\rm g.c.d.} (r,d)=1.$

\item 
$\beta_\base: M_{Dol}(\csub,r,d)\ra 0_{\base}\cong \base$; here, ${\rm g.c.d.} (r, d)=1;$

\item
$\tau_\base : M_{Hod}(\csub,r,d)\ra \Aa^1_{\base};$ here,  $\brg $ is an algebraically closed field of characteristic $p>0$,  and ${\rm g.c.d.} (r, \ov{d}p)=1;$

\item
$\gam_\base : M_{dR}(\csub,r,d)\ra 1_{\base}\cong \base$; here,  $\brg $ is an algebraically closed field of characteristic $p>0$,  and ${\rm g.c.d.} (r, \ov{d}p)=1;$
\een
Moreover:  if $\base$ is integral, then the domains of these morphisms are  integral; 
if $\base$ is nonsingular, 
then the domains are nonsingular.

\end{pr}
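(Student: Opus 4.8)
The plan is to reduce everything to a single statement — the smoothness of $\tau_\base: M_{Hod}(\csub, r, \ov{d}p) \to \Aa^1_\base$ — since items (1), (2), (4) are fibers or special cases of (3): item (1) is the composition $\tau_\base$ followed by the structure map $\Aa^1_\base \to \base$, which is smooth as a composition of smooth morphisms; items (2) and (4) are the base changes of $\tau_\base$ along the sections $0_\base, 1_\base \to \Aa^1_\base$, hence smooth since smoothness is stable under base change. (For item (2) when $\csub$ is a family over a base not necessarily of characteristic $p$, one argues directly: $M_{Dol}(\csub, r, d) \to \base$ has smooth fibers by the $\comp$-theory of Simpson/Langer and is flat by a relative version of the argument below, or one cites that the obstruction space $H^2$ vanishes for curves.) So the heart of the matter is (3).

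First I would work fiberwise over $\base$. For each geometric point $b \to \base$ with residue field $\field_b$ of characteristic $p$, the fiber $(\tau_\base)_b$ is precisely $\tau_{Hod}(C_b; r, \ov{d}p): M_{Hod}(C_b) \to \Aa^1_{\field_b}$, which is smooth, surjective with connected fibers by Proposition \ref{t is smooth}. Thus $\tau_\base$ has smooth fibers over $\Aa^1_\base$, so — exactly as in the proof of Proposition \ref{t is smooth} — it suffices to prove that $\tau_\base$ is flat. Flatness is local on the target, and over the open part $\Gm_{,\base} \subseteq \Aa^1_\base$ we have the trivialization $\tau_\base^{-1}(\Gm_{,\base}) \cong M_{dR}(\csub) \times_\base \Gm_{,\base}$ from (\ref{prfr}) (which holds in the relative setting), and $M_{dR}(\csub) \to \base$ is flat — indeed smooth — because it has smooth fibers (Lemma \ref{nonempty} applied fiberwise) and its total space is, by the étale-local equivalence with the relative Dolbeault space of $\csubf$ from \ci{gr-2016}, itself smooth over $\base$. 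So the only issue is flatness of $\tau_\base$ along the divisor $0_\base \subseteq \Aa^1_\base$, i.e. flatness of $M_{Hod}(\csub) \to \Aa^1_\base$ near the zero section.

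For this I would repeat the strategy of Proposition \ref{t is smooth} in families. Localize at the generic point of $0_\base$: let $A$ be a suitable two-dimensional local ring surjecting onto $\Aa^1_\base$ near $0_\base$ (the local ring of $\Aa^1_\base$ at the generic point of the zero section), and let $\tau_A: M_A \to A$ be the base change. Since $M_A$ universally corepresents semistable $t$-connections over $A \times_\base C$, the three-claim argument goes through: using a relatively stable bundle $E$ on $\csub$ admitting a flat connection $\nabla$ (existence via \ci[Prop. 3.1]{bi-su} applied on fibers, together with \ci[Prop. 1.3.7]{hu-le} for relative stability), the twist $(\m E, x\nabla)$ gives an $A$-point of $M_A$ whose generic restriction specializes into the closed fiber, showing $\overline{M_\alpha} \cap (M_A)_0 \neq \emptyset$; then the integrality/nonsingularity of the closed fiber (which is $M_{Dol}$ over an integral base, hence integral by item (2)) plus upper semicontinuity of fiber dimension forces $\overline{M_\alpha} = M_{A,\mathrm{red}}$; and the final reducedness argument, using that $x$ is a non-zero-divisor because $\tau_A$ is dominant and that the closed fiber is reduced, shows $M_A = M_{A,\mathrm{red}}$. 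Then flatness over $A$ follows from \ci[III.9.7]{ha}, and glueing this with flatness over $\Gm_{,\base}$ gives flatness of $\tau_\base$ throughout.

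Finally, once $\tau_\base$ is smooth: surjectivity follows fiberwise from Proposition \ref{t is smooth}, and quasi-projectivity is part of the construction of $M_{Hod}(\csub)$ over $\base$ recalled in \S\ref{rino} (via \ci[Thm. 1.1]{la-2014}, since the moduli space is a quasi-projective $\base$-scheme). The statement on integrality and nonsingularity of the domains: if $\base$ is integral (resp. nonsingular), then $\Aa^1_\base$ is integral (resp. nonsingular), and a smooth morphism with integral (resp. nonsingular) fibers over an integral (resp. nonsingular) base has integral (resp. nonsingular) total space — integrality because a smooth, hence flat and open, surjection with irreducible fibers over an irreducible base has irreducible total space, and it is reduced by smoothness; nonsingularity is immediate from smoothness over a nonsingular base. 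The main obstacle I expect is the relative reducedness argument near the zero section — one must be careful that the relative version of the existence of stable bundles with flat connections and the relative stability criterion (\ci[Prop. 1.3.7]{hu-le}) genuinely produce an $A$-point of $M_A$, and that the closed fiber $M_{Dol}(\csub)$ over the generic point of the zero section really is integral, which is why item (2) must be established first (or simultaneously) rather than merely deduced from (3).
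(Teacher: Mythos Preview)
Your overall strategy (reduce to flatness, argue fiberwise smoothness, then run the three-claim argument) matches the paper, but the execution differs in two substantive ways, and one of them is a genuine gap.

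\textbf{Logical order.} You reduce (i), (ii), (iv) to (iii). But (iii) carries the extra hypothesis that $\brg$ is an algebraically closed field of positive characteristic, whereas (i) and (ii) do not. So (iii) cannot imply (i) or (ii) in the stated generality. The paper does the opposite: it proves (i) directly (the structure map $\alpha_\base: M_{Hod}\to \base$), then gets (ii) and (iv) by base change along $0_\base,1_\base$, and obtains (iii) by combining (i) with the fiberwise flatness of $\tau_b$ via \ci[IV.3, 11.3.11]{EGA4.3}. Also, your reduction to a ``two-dimensional local ring'' is mis-stated: the local ring of $\Aa^1_\base$ at the generic point of $0_\base$ is a DVR. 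The paper avoids this by applying the valuative criterion of flatness \ci[IV.3, 11.8.1]{EGA4.3} directly to $\alpha_\base$ (this is where the hypothesis that $\base$ is reduced enters), thereby working over an honest DVR $A$ mapping to $\base$.

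\textbf{The CLAIM 1 step.} This is the real gap. You want a relative stable bundle with flat connection on $C_A/A$, citing \ci{bi-su} fiberwise. But \ci[Prop.~3.1]{bi-su} gives you a connection on a single curve over a field; it does not produce a connection varying over the DVR, and there is no obvious way to globalize. You flag this yourself. The paper sidesteps the issue entirely: since it is proving (i), the closed fiber over $a\in A$ is all of $M_{Hod}(X_a)$, and it suffices to exhibit any stable \emph{Higgs bundle} (a $0$-connection) on $X_A/A$. It does this via the BNR correspondence: take a smooth spectral curve $S_A/A$ of degree $r$ (these exist over an open $U$ of the Hitchin base meeting both fibers), then use that ${\rm Pic}_{\m{S}_U/U}\to A$ is smooth and surjective, hence admits an \'etale-local section through $a$ \cite[\href{https://stacks.math.columbia.edu/tag/054L}{054L}]{stacks-project}. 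This produces a relative line bundle on a relative spectral curve, hence a relative stable Higgs bundle, hence the needed $A$-point of $M_{Hod}$. This replacement of ``relative flat connection'' by ``relative line bundle on a spectral curve'' is the key idea you are missing.
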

\begin{proof} Surjectivity can be checked after base change via geometric points $b\to \base$, in which case it follows from Proposition
\ref{t is smooth}.
The quasi projectivity follows from the fact that the moduli spaces are quasi projective over $\base.$
Note that parts (iii) and (iv) fail if we do not assume that $d$ is a multiple of $p$, for then $M_{dR}$ is empty.
Part (i)  implies parts (ii) and (iv) via the base changes   $0_\base, 1_\base \to \Aa^1_\base$. 
Part (i) coupled with the flatness of the morphisms $\tau_b$ at the geometric points of $\base$ (Proposition \ref{t is smooth})
implies part (iii) in view of  \ci[IV.3, 11.3.11]{EGA4.3}, which states that a $B$-morphism $f:X\to Y$ is flat if $X$ is flat over $B$ and the base change of $f$ to each point $b\in B$ is flat.

It follows that we only need to prove part (i).
The proofs of (i) follow the same thread as the proof of smoothness in Theorem \ref{t is smooth}. As the proof we are about to give shows,
we are really implicitly proving (ii) as we prove explicitly (i).

{\bf Proof of part (i).}
Since the fibers of $\alpha_\base$ are smooth (Proposition \ref{t is smooth}), it is enough to prove the flatness
of the  locally finitely presented morphism $\alpha_\base$. 
By  the valuative criterion of flatness
\ci[IV.3, 11.8.1]{EGA4.3}, 
we  can  replace our  $\base$ with the spectrum  $A$ of a DVR mapping to $\base$.
The proof that  $\alpha_A$ is flat is very similar to the proof
of Proposition \ref{t is smooth}. Note that in order to use the valuative criterion of flatness, we need the assumption that $B$ is reduced.

In the present context, the only point that requires a different proof is the analogue of CLAIM 1 in the proof of said proposition: it is enough 
to  exhibit an Higgs bundle   on the  curve $X_A/A$ over the DVR $A$. 
In order to conclude the proof of part (i) it is thus sufficient to prove the forthcoming CLAIM 1A.
Let $a$ and $\alpha$ be the closed and  open points of $A.$

{\bf CLAIM 1A:} {\em   We have $\ov{M_\alpha }  \cap M_a \neq \emptyset.$}

By the BNR correspondence \cite[Prop.\;3.6]{be-na-ra} for smooth spectral curves:  (a line bundle of the appropriate degree
on a smooth  degree $r$ spectral curve $S/A$)  $\mapsto$ (a stable Higgs bundle of the appropriate degree on the curve $X_A/A$).

If $\ov{M_\alpha }$ and $M_a$ were disjoint, then they would stay disjoint after any base change $Z\to A$ covering $a$.
It  is thus enough to show that we can extend  any line bundle on any  smooth spectral curve $S_a$ over $C_a$ to a  line bundle on  a smooth spectral curve $S_A$ over  $C_A$, possibly after an \'etale base change $Z\to A$ covering $a.$

Let $u: \m{S}\to A(C_A/A, \omega_{C_A/A})$ be the universal spectral curve of degree $r$ for the family $C_A/A.$
 Since the universal spectral curve is flat over the Hitchin base, and the Hitchin base is flat over $A,$
the universal curve is flat over $A.$
By using the Jacobian criterion in connection with the polynomial expression for the equations of spectral curves, we see that
$\m{S}/A$, being flat, is smooth. Then, since for every geometric point $a$ on $A$ the fiber $\m{S}_a$ is nonsingular integral,  we see that $\m{S}$ is integral.   The morphism $u$ is not smooth, but since general spectral curves are nonsingular  --this is true over both points $a,\alpha \in A$--,  we have that
there is an open and dense subset $U \subset A(C_A/A, \omega_{C_A/A})$ over which $u$ is smooth and such that the resulting morphism $U \to A$ is smooth and surjective. Moreover, the geometric fibers of $\mathcal{S}$ over $U$ are nonsingular integral. 
By \cite[Thm.\;9.4.8, Prop.\;9.5.19]{kleiman},  the Picard scheme $Pic_{\mathcal{S}_{U}/U}$ exists as a smooth group scheme over $U$ which is separated and  locally of finite type  over $U$. Note that $Pic_{\mathcal{S}_{U}} /U$ is smooth and surjective.
In particular, $Pic_{\mathcal{S}_{U}} /A$ is smooth and surjective.
By \cite[\href{https://stacks.math.columbia.edu/tag/054L}{054L}]{stacks-project}, \'etale locally over $a\in A$, the morphism
$Pic_{\mathcal{S}_{U}/U}\ra A$ admits a section.  CLAIM 1A  is proved,  Part (i), and thus (ii), (iii) and (iv), follow.


 Finally, since $\ah_B$, $\beta_B$, and $\gam_B$ are smooth, 
 we have that their domains are nonsingular.
 By Lemmata \ref{nonempty} and \ref{t is smooth}, we have that the fibers of 
 $\ah_B$, $\beta_B$, and $\gam_B$ are integral, in particular connected.
 Since moreover their images are connected, we have that their domains must also be connected, thus integral. 
 \end{proof}

\begin{pr}[{\bf (Properness of Hodge-Hitchin over a base)}]\la{prpbz}$\;$

\n
Let $\csub$ be a smooth curve (\S\ref{rino}) over a Noetherian integral and normal  base $\base$ that is of finite type
over   an algebraically closed field of characteristic $p>0.$
Assume that $d=\ov{d}p$ is a multiple of $p$ and that ${\rm g.c.d.} (r,\ov{d})=1.$
The Hodge-Hitchin morphism $h_{Hod}$
 (\ref{mz2})  is proper, in fact projective.
\end{pr}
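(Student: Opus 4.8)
The plan is to deduce this from the ad hoc properness criterion Proposition~\ref{ad hoc pr}, following the strategy used for the absolute statement (Proposition~\ref{hh is proper}) with every absolute input there replaced by its relative counterpart over $\base$. Set $X:=M_{Hod}(\csub;r,\ov{d}p)$, let $Y:=A(\csubf;r)\times_\base\Aa^1_\base$ be the target of the Hodge-Hitchin morphism (\ref{mz2}), put $T:=\Aa^1_\base$, and consider
\[
\tau_\base \;=\; \mathrm{pr}_2\circ h_{Hod}(\csub;r,\ov{d}p)\colon\; X \xra{\,h_{Hod}\,} Y \xra{\,\mathrm{pr}_2\,} T .
\]
I would check hypotheses (1)--(4) of Proposition~\ref{ad hoc pr} for $m\circ f=\tau_\base$ with $f=h_{Hod}$; its conclusion is that $h_{Hod}$ is proper, and since $h_{Hod}$ is quasi-projective over $\base$ by construction (\S\ref{rino}), it is then projective.

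Hypothesis (1) is clear, since $X$ and $Y$ are of finite type over the Noetherian base $\base$. For hypothesis (2), I would invoke Proposition~\ref{rel smooth}: the structure morphism $\ah_\base\colon X\to\base$ is smooth, so $X$ is integral because $\base$ is integral (and, being smooth over the normal scheme $\base$, $X$ is in fact normal, although only integrality of $X$ is needed), while $Y$ is an affine-space bundle over $\base$ via $A(\csubf;r)\times_\base\Aa^1_\base\to\base$, hence integral and normal because $\base$ is. In hypothesis (3), $f=h_{Hod}$ is separated and of finite type because it is quasi-projective over $\base$.

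The remaining content of hypothesis (3) --- surjectivity and geometric connectedness of the fibers of $f$ --- and hypothesis (4) I would reduce to the absolute case already settled in Proposition~\ref{hh is proper}. Surjectivity and geometric connectedness of fibers are conditions on the geometric fibers of $f$ over points of $Y$, and every geometric point of $Y$ lies over a geometric point $\bar b\to\base$ whose residue field is an algebraically closed field of characteristic $p$; the base change of $f$ along $\bar b\to\base$ is the absolute Hodge-Hitchin morphism $h_{Hod}(C_{\bar b})$, which is surjective with geometrically connected fibers by the arguments in the proof of Proposition~\ref{hh is proper} (for the fiber of $\tau$ over $t=0$, the Hitchin morphism composed with the finite, proper, surjective $Fr_A$, the Hitchin morphism having connected fibers by Zariski's Main Theorem; for $t\neq 0$, the de Rham-Hitchin morphism, surjective with connected fibers since its fibers are isomorphic to those of the Hitchin morphism for the Frobenius twist, via (\ref{hh91}) and \cite[Cor. 3.47]{gr-2016}). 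For hypothesis (4), a closed point $t\in T=\Aa^1_\base$ maps to a closed point $b\in\base$, and since $\base$ is of finite type over an algebraically closed field $\field$ we have $\kappa(b)=\kappa(t)=\field$; hence the base change $f_t\colon X_t\to Y_t$ is, over $\field$, either the Hitchin morphism composed with $Fr_A$ (if $t$ lies over $0$), or the de Rham-Hitchin morphism (if $t$ lies over $1$), or a morphism isomorphic to it (if $t$ lies over $\lambda\neq 0$, by (\ref{hh91})); each of these is proper (Hitchin morphism: \cite{nitsure}; $Fr_A$ finite; de Rham-Hitchin: \cite[Cor. 3.47]{gr-2016}). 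Thus all hypotheses of Proposition~\ref{ad hoc pr} are met and $h_{Hod}$ is proper, hence projective.

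The main obstacle is hypothesis (2): one needs that the relative Hodge moduli space is integral (and the relative Hitchin target normal) over an integral, normal $\base$, which is exactly the content of the relative smoothness result Proposition~\ref{rel smooth} and is where the coprimality hypotheses are used. The remainder is a bookkeeping reduction of properness, surjectivity, and connectedness to the absolute case of Proposition~\ref{hh is proper}, using that geometric points (resp. closed points) of the relevant bases factor through geometric points (resp. closed points with algebraically closed residue field) of $\base$.
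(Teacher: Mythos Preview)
Your proposal is correct and follows precisely the approach the paper takes: the paper's own proof simply asserts that it suffices to verify hypotheses (1)--(4) of Proposition~\ref{ad hoc pr} ``as it has been done in the proof of Proposition~\ref{hh is proper}'' and that ``the verification is completely analogous,'' while you have carried out that analogous verification in detail, correctly invoking Proposition~\ref{rel smooth} for the integrality of $X$ and the normality of $Y$ over the integral normal base $\base$, and correctly reducing surjectivity, geometric connectedness of fibers, and properness of the $f_t$ to the absolute case via universal corepresentability and the fact that closed points of $\Aa^1_\base$ have residue field $\field$.
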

\begin{proof}
Since the Hodge-Hitchin morphism is quasi projective, it is enough to prove it is proper.
To this end, it is enough to  verify the hypotheses (1-4) in  the Properness Criterion \ref{ad hoc pr}, as  it has been done 
in the proof of
Proposition \ref{hh is proper}. The verification  is completely analogous.
\end{proof}

\subsection{Compactifications, vanishing cycles and specialization}\la{cova}$\;$

Recall that if a family  is not proper over a  Henselian DVR (or, more geometrically, over a smooth curve), then  the specialization morphism (\ref{rr12})  is not necessarily defined and, moreover, smoothness
of the family alone is not sufficient in general  to infer the vanishing  we prove next. Such issues have been tackled
over the complex numbers in \ci{de-2021}. The discussion \ci[\S5.1]{de-zh B}  shows that under favorable circumstances, we can apply the results in \ci{de-2021}, originally proved over the complex numbers,
to a situation  over an algebricaically closed field, and over a  DVR. Based on this, we state and prove the following

\begin{pr}\la{sp cptz}$\;$

\ben\item
Let things be as in  \S\ref{the mainz}: 
 $C/\field$, is a smooth curve (\S\ref{rino}) over an algebraically closed field $\field$,   $char (\field)=p>0,$  and ${\rm g.c.d.} (r,d=\ov{d}p)=1$. 
 Let  $\phi_\tau$ be the vanishing cycle functor  (\S\ref{rem vn}) associated with the morphism
 $\tau_{Hod}: M_{Hod} \to \Aa^1_\field$ (\ref{mz2})
  after base change  $S\to \Aa^1_\field$  from  the a strict Henselianization of $\Aa^1_\field$ at the origin.  We have the identity $\phi_\tau (\tau_* \oql)=0$ for the vanishing cycles (\S\ref{rem vn}).

\item
Let $\csub$ be a smooth curve (\S\ref{rino}) where $\base$ 
is (the spectrum of) a   strictly
Henselian DVR (\S\ref{rem vn}).
Assume ${\rm g.c.d.}(r,d)=1$ and, when the DVR is of mixed characteristic $(0, p>0)$,  also assume  that $p>r$.
The specialization morphism
\beq\la{r575}
\xymatrix{
H^* \left( M_{Dol} (C_s; r, d), \oql  \right)  \ar[r]^-{\rm sp} & 
H^* \left( M_{Dol} (C_{\ov{\eta}}; r, d), \oql \right)
}
\eeq
is defined, it is a cohomology ring isomorphism, and a filtered isomorphism for the perverse Leray filtrations induced by the
respective Hitchin morphisms (\ref{mhi}).
\een
\end{pr}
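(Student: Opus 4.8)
Both parts turn on the same structural point: the relevant Hitchin-type morphism is proper — $h_{Hod}$ in (1) by Theorem \ref{hohiisproper}, the relative Hitchin morphism $h_{Dol}(C/B)$ in (2) by \ci[Th. 2.18]{de-zh B} — and the source fails to be proper over the base only because the \emph{target} of that morphism, an affine space in (1) and an affine-space bundle in (2), is not proper over the base. The plan is therefore (a) to deduce from smoothness and proper base change that the relevant pushforward already ``specializes isomorphically'' fibrewise over the Hitchin base, and (b) to transport this through the non-proper affine direction by means of a $\Gm$-equivariant relative compactification, invoking the compactification/specialization package of \ci{de-2021} in the equal- and mixed-characteristic form recorded in \ci[\S5]{de-zh B}.

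\textbf{Part (1).} Using the factorization $\tau_{Hod}=\pi\circ h_{Hod}$ of (\ref{mz2}): $\tau_{Hod}$ is smooth (Proposition \ref{t is smooth}), so $\phi_{\tau}(\oql)=0$ on $M_{Hod,S}$ by Fact \ref{fvz}(1); $h_{Hod}$ is proper (Theorem \ref{hohiisproper}), so by Fact \ref{fvz}(2) we get $\phi_{\pi}(R h_{Hod,*}\oql)=R h_{Hod,s,*}\phi_{\tau}(\oql)=0$ on $A(\FC)\times s$. What remains is to show this persists after applying the non-proper $R\pi_*$. For this I would choose a $\Gm$-equivariant compactification $\overline{A}$ of the affine space $A(\FC)$ for the weighted $\Gm$-action of the paragraph after (\ref{d100m}); since $h_{Hod}$ is projective, $M_{Hod}$ embeds into a projective bundle over $A(\FC)\times\Aa^1$, which I extend over $\overline{A}\times\Aa^1$ and inside which I take the closure $\overline{M}_{Hod}$ of $M_{Hod}$. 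Properness of $h_{Hod}$ forces the boundary $\overline{M}_{Hod}\setminus M_{Hod}$ to lie over $\overline{A}\setminus A(\FC)$; the interior is smooth over $\Aa^1$; and $\overline{\tau}\colon\overline{M}_{Hod}\to\Aa^1$ is proper and $\Gm$-equivariant over the weight-$1$ action on $\Aa^1$. Feeding $\overline{M}_{Hod}$, its boundary, and the contracting $\Gm$-action of $M_{Hod}$ onto the proper fixed locus $M_{Hod}^{\Gm}\subset N_{Dol}$ into the argument of \ci{de-2021} (legitimate over $\field$ by \ci[\S5]{de-zh B}) yields $\phi_\tau(\tau_*\oql)=0$.

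\textbf{Part (2).} Set $h:=h_{Dol}(C/B;r,d)\colon M_{Dol}(C/B;r,d)\to A(C/B;r)$, projective by \ci[Th. 2.18]{de-zh B}, with $A(C/B;r)\to B$ a vector bundle. After the standard reduction to a finite-type base by a limit argument, Proposition \ref{rel smooth}(ii) gives that $M_{Dol}(C/B)\to B$ is smooth with integral, regular total space (using that $B$ is integral and regular). One then repeats Part (1) over the strictly Henselian trait $B$: smoothness gives the vanishing of vanishing cycles for the relative family; properness of $h$ plus proper base change descends this to a statement fibrewise over $A(C/B;r)$; and a $\Gm$-equivariant relative compactification $\overline{M}_{Dol}(C/B)\to B$ provided by \ci{de-zh B}, fed into the package of \ci{de-2021} via \ci[\S5]{de-zh B}, shows both that the specialization morphism (\ref{r575}) is defined and that it is an isomorphism. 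The mixed-characteristic hypothesis $p>r$ is used exactly here: it makes the compactification and semicontinuity inputs of \ci{de-zh B} available uniformly over $B$, and it secures the tameness of the boundary monodromy that the vanishing-cycle formalism needs in order to behave as in characteristic zero. Since ${\rm sp}$, once defined, is compatible with cup products (Remark \ref{comp cup}), the resulting isomorphism is one of cohomology rings.

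\textbf{Filtrations and the main difficulty.} For the perverse Leray filtrations, observe that $h_{Dol}(C_s)$ and $h_{Dol}(C_{\ov{\eta}})$ are the special and generic fibers of the proper $B$-morphism $h$; hence ${\rm sp}$ on $H^*(M_{Dol}(C_s))$ is induced, via the proper base change isomorphism $i^*Rh_*\simeq Rh_{s,*}i^*$ and the nearby-cycle morphism, by a comparison of the complexes $Rh_{s,*}\oql$ and $\psi(Rh_*\oql)$ on $A(C_s;r)$; since $\psi[-1]$ and $\phi$ are $t$-exact for the rectified perverse $t$-structure over the DVR (\ci[\S5.2]{de-zh B}) they commute with perverse truncation, and proper base change is filtered, so ${\rm sp}$ carries $P^{h_{Dol}(C_s)}_\bullet$ onto $P^{h_{Dol}(C_{\ov{\eta}})}_\bullet$. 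The main obstacle, in both parts, is precisely the non-properness of the moduli space over the base: getting the specialization morphism defined and verifying the hypotheses of the \ci{de-2021} compactification package — above all, controlling the boundary of a $\Gm$-equivariant relative compactification in mixed characteristic — is where the real work lies, everything else being imported from the results established above or from \ci{de-zh B}.
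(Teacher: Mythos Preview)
Your approach is essentially the paper's: both parts are obtained by feeding the smoothness of $\tau_{Hod}$ (resp.\ of $M_{Dol}(C/B)\to B$) and the properness of the Hodge--Hitchin (resp.\ Hitchin) morphism into the compactification/specialization package of \ci{de-2021}, made available in this setting by \ci[\S5]{de-zh B}. The paper's proof is a two-line black-box citation of \ci[Lm.\ 4.3.3]{de-2021} for (1) and \ci[Tm.\ 4.4.2]{de-2021} for (2), whereas you unpack what those results actually do; that is fine and your unpacking is largely accurate.

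Two small points. First, the paper names \emph{universal corepresentability} of the moduli functor (Langer \ci[Tm.\ 1.1]{la-2014}, available here by coprimality) as an explicit hypothesis required to invoke the \ci{de-2021}/\ci{de-zh B} package; you should record this rather than leave it implicit, since it is what guarantees that the relative moduli spaces base-change correctly along the compactification and along $S\to\Aa^1_\field$. Second, your explanation of the role of $p>r$ (``tameness of boundary monodromy'') is speculative and not what is actually used: the condition enters through the construction of the relative compactification in \ci{de-zh B}, not through any tameness input to the nearby/vanishing-cycle formalism itself. Your ``standard reduction to a finite-type base by a limit argument'' is also unnecessary --- the DVR is already Noetherian and Proposition \ref{rel smooth}(ii) applies directly.
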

\begin{proof} According to the discussion \ci[\S5.1]{de-zh B}, we can apply \ci[Lm.\;4.3.3]{de-2021} (resp.\;\ci[Tm.\;4.4.2]{de-2021}) to the present situation (1) 
(resp.\;(2)), as long as the morphism $M_{Hod}(C/\field) \to \Aa^1_\field$ (resp.\;$M_{Dol}(C/B) \to B$)
is smooth and the moduli space universally corepresents the appropriate functor. 
The smoothness  has been  proved in Proposition \ref{rel smooth}.(3) (resp.\;\ref{rel smooth}.(2)), and,
in view of the fact that stability equals semistability in the coprime case, 
 the universal corepresentability  in the coprime case is due to A. Langer \ci[Tm. 1.1]{la-2014}. 
This implies the desired conclusion (1) (resp.\;(2)).
 \end{proof}

\begin{rmk}\la{d> 1}
If we replace the Dolbeault moduli spaces in Theorems \ref{vvooqq}, \ref{c cop} and \ref{c cop p} with the moduli space of stable $L$-twisted Higgs bundles of degree coprime to the rank, where $L$ is either the canonical bundle, or it satisfies $\text{deg }L> \text{deg }\omega_{C}$, then we still have the analogous conclusion as in Proposition \ref{sp cptz}.(2).
This is because the analogue of Proposition \ref{rel smooth}.(2) holds  by the coprimality condition,
with virtually the same proof.
\end{rmk}

\subsection{Second  proof of Theorem \ref{zmain}}\la{second proof}$\;$

In this section, we use Proposition \ref{sp cptz} to give a second and simpler proof of Theorem \ref{zmain}. In fact, this proof yields
an even stronger statement. On the other hand,  the proof of \ref{zmain} is more self-contained and, importantly, brings to the front
the isomorphism (\ref{to ex}), which plays a key role in the proof of   Theorem \ref{central fiber}, which  is key to 
proving the 
 $p$-Multiplicative periodicity with Frobenius twists Theorem \ref{int cons}, which in turn plays a repeated role henceforth.

Recall our assumptions \S\ref{the mainz}: 
 $C/\field$, $char (\field)=p>0,$  and ${\rm g.c.d.} (r,d=\ov{d}p)=1$. 
 
\begin{tm}[{\bf (Cohomological Simpson Correspondence $char(\field)=p>0$, II)}]\la{via pb} $\;$

\n
The inclusions $i_0: M_{Dol}\to M_{Hod}$ and $i_1:M_{dR} \to M_{Hod}$ induce
filtered  isomorphisms of  cohomology rings
\beq\la{rollo}
\xymatrix{
H^*(M_{Dol} (C;r,\ov{d}p)  ,\oql)
&
H^*(M_{Hod} (C;r,\ov{d}p)  ,\oql) \ar[l]_{i_0^*}^-\simeq \ar[r]_-\simeq^{i_1^*}
&
H^*(M_{dR} (C;r,\ov{d}p)  ,\oql)
}
\eeq
for the perverse Leray filtrations  associated with the Hitchin, the Hodge-Hitchin and the de Rham-Hitchin morphism, respectively.
\end{tm}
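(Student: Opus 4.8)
The plan is to work with the smooth family $\tau=\tau_{Hod}(C)\colon M_{Hod}(C)\to\Aa^1_\field$, to feed in the vanishing‑cycle vanishing of Proposition \ref{sp cptz}(1), and to run the whole argument at the level of the complexes $h_{Hod,*}\oql$, $h_{dR,*}\oql$, $h_{Dol,*}\oql$ on the Hitchin bases, so as to retain the perverse Leray filtrations. Write $\pi=\mathrm{pr}_2\colon A(\FC)\times\Aa^1_\field\to\Aa^1_\field$ as in (\ref{oq12}), so $\tau=\pi\circ h_{Hod}$, and let $i_0,i_1$ also denote the closed immersions $A(\FC)\times\{0\},A(\FC)\times\{1\}\hookrightarrow A(\FC)\times\Aa^1_\field$. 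First I would record two inputs. (a) Since $h_{Hod}$ is proper (Proposition \ref{hh is proper}), proper base change gives isomorphisms of complexes on $A(\FC)$: $i_1^*h_{Hod,*}\oql\cong h_{dR,*}\oql$ and $i_0^*h_{Hod,*}\oql\cong h_{Hod,0,*}\oql=Fr_{A,*}\,h_{Dol,*}\oql$ (the latter by (\ref{d100m})), while over $\Gm$ the trivialization (\ref{hh91}) identifies $h_{Hod}|_{\Gm}$ with $h_{dR}\times\mathrm{Id}_{\Gm}$. (b) By Proposition \ref{sp cptz}(1) the vanishing cycles of $h_{Hod,*}\oql$ along $\pi$ at the origin vanish; since $\tau$ is smooth (Proposition \ref{rel smooth}(3)) one has $\psi_\tau(\oql)=\oql$, and combining these with the commutation of vanishing/nearby cycles with the proper $h_{Hod}$ (Fact \ref{fvz}(2)) yields on $A(\FC)$ the chain (\ref{to ex}), now upgraded to the assertion that the ``boundary'' of the family contributes nothing: $i_0^*h_{Hod,*}\oql\cong\psi_\pi(h_{Hod,*}\oql)\cong h_{dR,*}\oql$.

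Next I would show that restriction to the two slices induces isomorphisms on cohomology. Writing $H^*(M_{Hod})=H^*(A(\FC)\times\Aa^1_\field,h_{Hod,*}\oql)$ and pushing down further along $\pi$, the cohomology sheaves $R^q\tau_*\oql$ on $\Aa^1_\field$ are, by (a), constant on $\Gm$, and, by the vanishing of $\phi$ at the origin in (b), extend as lisse sheaves with trivial local monodromy across $0$; since $\pi_1(\Gm)\twoheadrightarrow\pi_1(\Aa^1_\field)$, they are the constant sheaves $\underline{H^q(M_{dR},\oql)}$. Because $H^*(\Aa^1_\field,\oql)$ is concentrated in degree $0$, the Leray spectral sequence for $\tau$ degenerates and identifies $H^n(M_{Hod})$ with $H^n(M_{dR})$; concretely, for every $\field$‑point $t$ the restriction $H^*(M_{Hod})\to H^*(M_{Hod,t})$ is an isomorphism — base change at $t$ being an isomorphism by the explicit product description over $\Gm$ for $t\neq 0$, and by (b) together with (\ref{d100m}) for $t=0$. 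In particular $i_0^*\colon H^*(M_{Hod})\to H^*(M_{Dol})$ and $i_1^*\colon H^*(M_{Hod})\to H^*(M_{dR})$ are isomorphisms; being pullbacks along closed immersions they are ring homomorphisms, so they are isomorphisms of cohomology rings, and $i_1^*\circ(i_0^*)^{-1}$ recovers and refines Theorem \ref{zmain}.

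For the filtered statement I would argue, as at the end of the proof of Theorem \ref{zmain}, that the complex‑level isomorphisms of (a)–(b) over $A(\FC)$ are compatible with the perverse truncations $\ptd{\bullet}$. Over $\Gm$ this is because $h_{Hod}|_{\Gm}=h_{dR}\times\mathrm{Id}_{\Gm}$ is a constant family and $\ptd{\bullet}$ commutes, up to the standard shift, with exterior product by (a shift of) the constant sheaf on $\Gm$; at the origin it follows from the $t$‑exactness of $\phi$ and of $\psi[-1]$ together with the vanishing $\phi_\pi(h_{Hod,*}\oql)=0$ and proper base change. Hence $i_1^*$ is strictly compatible with $P^{h_{Hod}}$ and $P^{h_{dR}}$, i.e. a filtered isomorphism; for $i_0^*$ one combines the same reasoning with (\ref{the2co}), which via (\ref{comp pt}) and the finiteness of $Fr_A$ replaces $P^{h_{Dol}}$ by $P^{h_{Hod,0}}$. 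Cup‑product compatibility of the comparison maps built from vanishing/nearby cycles, should one prefer that formulation, is Remark \ref{comp cup}.

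The hard part will be the assertion that restriction to the special fibre is an isomorphism on cohomology: because $\tau$ is not proper over $\Aa^1_\field$, this is not a consequence of smoothness alone, and it rests squarely on Proposition \ref{sp cptz}(1) — hence, ultimately, on the compactification/specialization package of \ci{de-2021,de-zh B}. Everything else is bookkeeping: proper base change for the proper morphism $h_{Hod}$, the $t$‑exactness of $\psi[-1]$ and $\phi$, and the elementary cohomology of the affine line.
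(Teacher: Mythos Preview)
Your proof is correct and follows essentially the same approach as the paper: the key inputs (smoothness of $\tau$, properness of $h_{Hod}$, the vanishing $\phi_\tau(\tau_*\oql)=0$ from Proposition~\ref{sp cptz}(1), and the trivialization over $\Gm$) are the same, and your constancy argument via $\pi_1(\Gm)\twoheadrightarrow\pi_1(\Aa^1_\field)$ is precisely the variant the paper records in Remark~\ref{007} as an alternative to its use of Beilinson's gluing of perverse sheaves. For the filtered statement, the paper streamlines matters by simply rerunning the constancy argument with $\tau_*\oql$ replaced by ${\rm pr}_{2,*}\,\ptd{\bullet}\,h_{Hod,*}\oql$, whereas you argue at the level of $A(\FC)$ using $t$-exactness of $\phi$ and $\psi[-1]$; both reach the same conclusion.
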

\begin{proof}
By virtue of the smoothness of $\tau_{Hod}$ (Theorem \ref{t is smooth}) and of the properness of the Hodge-Hitchin morphism
(Theorem \ref{hh is proper}),
 we can apply  Proposition \ref{sp cptz},  and we have   $\phi_\tau (\tau_* \oql)= 0$. 

Since $\w{\phi_\tau:}= \phi_\tau [1]$ is $t$-exact for the perverse $t$-structure, we have the identity  
\[
\w{\phi_\tau}  (\pcs^\bullet{(\tau_* \oql)}) =   \pcs^\bullet{(\w{\phi_\tau}(\tau_* \oql))}=0
\]
relating  perverse cohomology sheaves.
The local trivialization (\ref{prfr}) implies that  the restriction 
$\pcs^\bullet {(\tau_* \oql)}_{|{\Gm}_{,\field}} \simeq \ms{L}^\bullet [1]$, where $\ms{L}^\bullet$ is a suitably constant sheaf on ${\Gm}_{,\field}.$

By combining the two assertions of the previous paragraph with A. Beilisnon's description of perverse sheaves via the vanishing cycle functor (see
\ci[Prop. 3.1]{beilinson}, or \ci[Thm. 5.7.7]{bams},
for example), we see that the perverse cohomology sheaves $\pcs^\bullet {(\tau_* \oql)}$ are constant sheaves shifted by $[1]$.

A simple induction using the perverse truncation distinguished triangles, coupled with the fact that
$H^{\bullet \neq 0} (\Aa^1_\field, \oql)=0$, shows that the complex $\tau_* \oql$ splits as the direct sum
of its shifted perverse cohomology sheaves, and thus, because they are shifts of constant sheaves, as the direct sum  $\oplus_{i\ge0} R^i\tau_* \oql[-i]$ of its shifted direct image sheaves which, moreover,  are constant sheaves of some rank.

The unfiltered assertion (\ref{rollo}) follows.
For the filtered version we argue similarly, replacing $\tau_* \oql$ with 
the sequence of complexes ${{\rm pr}_2}_* \ptd{\bullet} {h_{Hod}}_* \oql$   (cf. (\ref{mz2})).
\end{proof}

\begin{rmk}\la{007}
We can also prove Theorem \ref{via pb}, without using Beilinson's glueing of perverse sheaves, as follows:

Since $\phi_{\tau}(\tau_*\oql)=0$, we have that $R^i\tau_*\oql$ is locally constant for each $i$. 
We also know that $R^i\tau_*\oql$ is constant over $\Gm$.
Therefore the local system $R^i\tau_*\oql$ is determined by a continuous representation $\pi_1(\mathbb{A}^1_k, 1)$ into $GL(H^i(M_{dR},\oql))$ of the \'etale fundamental group such that 
the composition with $\pi_1(\Gm,1)\to \pi_1(\mathbb{A}^1_k, 1)$ is trivial.
Since the morphism $\pi_1(\Gm,1)\to \pi_1(\mathbb{A}^1_k,1)$ is surjective \ci[\href{https://stacks.math.columbia.edu/tag/0BQI}{0BQI}]{stacks-project}, we have that the representation $\pi_1(\mathbb{A}^1_k,1)\to GL(H^i(M_{dR},\oql))$ is also trivial, so that $R^i\tau_*\oql$ is constant over $\mathbb{A}^1_k$. 
\end{rmk}

\begin{rmk}\la{mot}
If we disregard the filtrations, the  ring isomorphisms (\ref{rollo}) lift to Voevodsky motives:  one combines the following two results 
 \ci[Thm. B1,  Cor. B2, and the method of proof of Thm. 4.2]{ho-le} with the  setup and smoothness results of this paper.  
\end{rmk}

\subsection{Cohomology ring of Dolbeault moduli spaces for two distinct curves}\la{?m?}$\;$

The goal of this subsection section is to prove   Theorem \ref{vvooqq}, which, over the complex numbers,  is an immediate consequence of the Simpson correspondence,
for the two Dolbeault spaces have isomorphic Betti moduli spaces, to which they are canonically homeomorphic.

\begin{tm}
[{\bf (Different curves, same degree)}]
\la{vvooqq}
Let $C_i/\field$ be two smooth curves (\S\ref{rino}) over an algebraically closed field.
Assume that rank and degree are coprime ${\rm g.c.d.} (r,d)=1$ 
(we do not assume that $d$ is a multiple of $p$).
There is  a non canonical  isomorphism  of   cohomology rings which is a filtered isomorphism for the perverse Leray filtrations
stemming from the  respective Hitchin morphism 
\begin{equation}
\label{twocurves}
    \xymatrix{
    H^*(M_{Dol}(C_1;r,d), \oql)  \ar[r]^-\simeq_-{(*)} &
    H^*(M_{Dol}(C_2;r,d), \oql).
    }
\end{equation}
If, in addition,  the ground field is of characteristic $p>0$, and $d=\ov{d}p$ is an integer multiple of $p$,  then 
we have a commutative diagram of  isomorphisms of cohomology rings which are filtered isomorphisms
for the respective perverse Leray filtrations
\begin{equation}
\label{twocurves bis}
    \xymatrix{
    H^*(M_{Dol}(C_1; r, \ov{d}p), \oql)  \ar[r]^-\simeq_{(*)} \ar[d]^-\simeq   &
    H^*(M_{Dol}(C_2; r, \ov{d}p), \oql)   \ar[d]^-\simeq
    \\
    H^*(M_{dR}(C_1; r, \ov{d}p), \oql)  \ar@{.>}[r]^-\simeq &
    H^*(M_{dR}(C_2; r, \ov{d}p), \oql).
    }
\end{equation}
\end{tm}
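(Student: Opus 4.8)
The idea is to transport the specialization isomorphism of Proposition~\ref{sp cptz}.(2) along a chain of smooth curves joining $C_1$ to $C_2$ inside a parameter space for genus-$g$ curves, and, in the positive-characteristic case, to splice the resulting horizontal isomorphism with two applications of Theorem~\ref{zmain}. Write $g$ for the common genus of $C_1$ and $C_2$ (when the genera differ the two moduli spaces have different dimensions, so there is nothing to prove). Since genus-$g$ curves form an irreducible family --- the stack $\mathcal{M}_g$ is irreducible, so for $B$ one may take a suitable scheme \'etale over $\mathcal{M}_g$, or an irreducible component, with its reduced structure, of a Hilbert scheme of tricanonically embedded curves --- I would first fix an irreducible reduced $\field$-scheme $B$ of finite type carrying a smooth projective family of curves $\mathscr{C}\to B$ with $\mathscr{C}_{b_1}\simeq C_1$ and $\mathscr{C}_{b_2}\simeq C_2$ for two closed points $b_1,b_2\in B$. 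As $B$ is irreducible over $\field$, I can join $b_1$ and $b_2$ by a finite chain of irreducible curves $B'_1,\dots,B'_n\subseteq B$ through intermediate closed points $b_1=c_0,c_1,\dots,c_n=b_2$ with $c_{i-1},c_i\in B'_i$; normalizing each $B'_i$ produces a smooth $\field$-curve $\widetilde{B}'_i$ with closed points $p_i,q_i$ lying over $c_{i-1},c_i$, onto which I pull the family back to get $\mathscr{C}_i/\widetilde{B}'_i$.

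\textbf{Propagating the isomorphism.} For each $i$, I would pass to the strict Henselizations $A_{p_i},A_{q_i}$ of the local rings of $\widetilde{B}'_i$ at $p_i,q_i$; these are strictly Henselian DVRs, and they are of equal characteristic (that of $\field$), so the hypothesis ``$p>r$'' in Proposition~\ref{sp cptz}.(2) does not intervene. By Proposition~\ref{rel smooth}.(2) the morphism $M_{Dol}(\mathscr{C}_i/\widetilde{B}'_i;r,d)\to\widetilde{B}'_i$ is smooth over the reduced curve $\widetilde{B}'_i$, and by \ci[Thm.~1.1]{la-2014} its formation commutes with the base changes ${\rm Spec}\,A_{p_i},{\rm Spec}\,A_{q_i}\to\widetilde{B}'_i$; hence Proposition~\ref{sp cptz}.(2) applies over each of these traits and gives filtered cohomology ring isomorphisms $H^*(M_{Dol}(\mathscr{C}_{c_{i-1}};r,d))\xrightarrow{\sim}H^*(M_{Dol}(C_{\overline{\eta}};r,d))$ and $H^*(M_{Dol}(\mathscr{C}_{c_i};r,d))\xrightarrow{\sim}H^*(M_{Dol}(C_{\overline{\eta}'};r,d))$, where $C_{\overline{\eta}}$ and $C_{\overline{\eta}'}$ are geometric generic fibres of $\mathscr{C}_i$ over the generic point $\eta_i$ of $\widetilde{B}'_i$. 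Since $C_{\overline{\eta}}$ and $C_{\overline{\eta}'}$ are both base changes of $\mathscr{C}_{\eta_i}$ along algebraic closures of $k(\eta_i)$, they become isomorphic over a common algebraic closure, compatibly with their Hitchin morphisms; and since $\ell$-adic cohomology is insensitive to enlarging an algebraically closed field, it follows that $H^*(M_{Dol}(C_{\overline{\eta}};r,d))$ and $H^*(M_{Dol}(C_{\overline{\eta}'};r,d))$ are filtered-isomorphic as rings. Composing these three isomorphisms, and then composing over $i=1,\dots,n$, produces the filtered ring isomorphism $(*)$ of (\ref{twocurves}); it is non-canonical because of the choices of the chain and of the algebraic closures.

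\textbf{The positive-characteristic square.} Suppose now $char(\field)=p>0$ and $d=\overline{d}p$. Theorem~\ref{zmain} (equivalently Theorem~\ref{via pb}) supplies filtered ring isomorphisms $H^*(M_{Dol}(C_j;r,\overline{d}p))\simeq H^*(M_{dR}(C_j;r,\overline{d}p))$ for $j=1,2$; I would take these as the vertical arrows of (\ref{twocurves bis}) and $(*)$ as the top arrow, and then define the bottom (dotted) arrow to be the composite of the right-hand vertical, of $(*)$, and of the inverse of the left-hand vertical. The square then commutes by construction, and all four arrows are filtered isomorphisms of cohomology rings. (Alternatively, one could rerun the previous two steps with the relative Hodge moduli space $M_{Hod}(\mathscr{C}_i/\widetilde{B}'_i)$ in place of $M_{Dol}$ --- smooth over $\widetilde{B}'_i$ by Proposition~\ref{rel smooth}.(1) --- once one records the evident extension of Proposition~\ref{sp cptz}.(2) to this smooth, universally corepresenting family, and then restricts the resulting isomorphism to the fibres of $\tau_{Hod}$ over $t=0$ and $t=1$; combined with Theorem~\ref{via pb} this yields all four maps simultaneously and intrinsically.)

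\textbf{Main obstacle.} The real content sits in the second step: the assertion that the specialization morphism (\ref{r575}) is even defined, and is a filtered ring isomorphism, despite $M_{Dol}$ not being proper over the trait. This is precisely Proposition~\ref{sp cptz}.(2), which itself rests on the smoothness statement Proposition~\ref{rel smooth}.(2), on universal corepresentability in the coprime case \ci[Thm.~1.1]{la-2014}, and on the compactification/vanishing-cycle package of \ci{de-2021} carried over to the DVR setting in \ci{de-zh B}. Granting that input, the remaining care is routine: the bookkeeping with algebraic closures of $k(\eta_i)$, which is exactly what forces $(*)$ to be non-canonical, and the verification that every map respects cup products and the perverse Leray filtrations --- for the specialization maps by Remark~\ref{comp cup} together with the compatibility already recorded in Proposition~\ref{sp cptz}.(2), and for the field-change isomorphism because it comes from an isomorphism of curves over $k(\eta_i)$ intertwining the Hitchin morphisms.
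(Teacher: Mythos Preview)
Your proposal is correct and follows essentially the same approach as the paper: connect $C_1$ and $C_2$ inside an irreducible parameter space of genus-$g$ curves, apply Proposition~\ref{sp cptz}.(2) (via Proposition~\ref{rel smooth}.(2)) to obtain the filtered ring isomorphism $(*)$, and then build the square~(\ref{twocurves bis}) by plugging in the vertical isomorphisms of Theorem~\ref{zmain} and defining the bottom arrow as the forced composite. The only cosmetic difference is that the paper arranges, directly from the irreducibility of the Hilbert scheme of tri-canonically embedded curves (and the level-structure moduli for $g=1$), for $B$ itself to be a single nonsingular connected curve containing $b_1,b_2$, so that a single ``triangulation through a geometric generic point'' suffices in place of your chain of normalized curves.
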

\begin{proof}
The second statement  (\ref{twocurves bis}) follows easily from the first one (\ref{twocurves}) as follows:
we take the vertical  isomorphisms in (\ref{twocurves}) to be the  canonical ones of Theorem \ref{zmain};
we take $(*)$ to be the one in (\ref{twocurves}); we close the diagram in the evident fashion.

We now construct the isomorphism $(*)$ in (\ref{twocurves}).

Let $g$ be the genus of the curves $C_1, C_2.$ If $g=0$, then the Dolbeault moduli spaces in questions
are a single point for $r=1$ and empty for $r>1$ (\ci[\S7]{nitsure}) in either case, there is nothing left to prove.  
If $g=1,$ then we argue as in the forthcoming $g\geq 2$ case, by using the  irreducible moduli space of $g=1$ curves with level structure \cite[Cor.\;5.6]{dr73}.
We may thus assume that $g\geq 2.$

By the irreducibility assertion \cite[\S3]{dm} for the Hilbert scheme of tri-canonically embedded curves of genus $g\geq 2$,
we can find a projective and smooth  family $\csub$ of  genus $g$ curves, with $B$ a nonsingular connected curve
and with  two closed fibers $X_{b_i} \simeq C_i,$ for $b_i \in B,$ $i=1,2.$

We conclude by taking $(*)$ to be  (\ref{r575}) as in Proposition \ref{sp cptz}.(2) (triangulate
$b_1$ and $b_2$ through a geometric generic point of $B$),  which we can use in view of the 
smoothness assertion in Theorem \ref{rel smooth}.(2).
\end{proof}

\begin{rmk}\la{larb}
The conclusion (\ref{twocurves}) in Theorem \ref{vvooqq} holds, with the same proof,  in the set up
of Remark \ref{d> 1}. The key points are the properness of the Hitchin morphism in families \ci{fa, nitsure, si mo II}, and  the smoothness of the Dolbeault moduli space (the same proof as the one of Proposition 
 \ref{rel smooth}.(2) goes through).
\end{rmk}

\subsection{\texorpdfstring{$p$}{p}-Multiplicativity without Frobenius twist}$\;$

{\bf The well-known additive periodicity of Dolbeault moduli spaces.}
Let $C$ be  a connected nonsingular  projective curve  over an algebraically closed field $\field$.
For arbitrary degree rank $r$ and  $d \in \zed$,    there is a canonical isomorphisms of cohomology rings for every $n \in \zed$
\beq\la{gkgk4}
H^* \left( M_{Dol} \left(  C;r , {d} \right) \right) \simeq 
H^* \left( M_{Dol} \left(  C; r , {d} +rn  \right) \right).
\eeq
This   follows from the fact that that the choice of any degree $n$ line bundle
$L$ on $C$ induces, by the assignments $(E,\phi) \mapsto (E \otimes L, 1_L\otimes \phi)$  an isomorphism of Dolbeault moduli spaces
that commutes with the Hitchin morphisms, 
hence induces a filtered isomorphism  of cohomology rings as in (\ref{gkgk4}). Since $L$ can be made to vary in the connected $Pic^n(C)$, we have that
this latter isomorphism is independent of the choice of $L \in Pic^n(C).$

We have the following consequence of Theorems \ref{int cons} and    \ref{vvooqq} which came as a  surprise to us.
Note the very different nature  of (\ref{dfr}), i.e. its expressing a periodicity under multiplication of the degree (coprime to the rank)  by powers of  $p$,  when compared with   (\ref{gkgk4}),
which expresses a periodicity when adding multiples of the rank to the degree.
 
 The following result is concerned with the curve $C$ only, and  should be compared with Theorem \ref{int cons}
 which is concerned with a curve $C$ and with its Frobenius twist $\FC.$
 
\begin{tm}
[{\bf ($p$-Multiplicative periodicity without Frobenius twists)}]\la{e4}
Let $C/\field$ be a smooth curve (\S\ref{rino})  over an algebraically closed field $\field$ of characteristic $p >  0$.
Assume that
${\rm g.c.d.} (r,d)=1$ (we do not assume that $d$ is a multiple of $p$).

For every  $m \in \zed^{\geq 0}$, there is  a non canonical isomorphism of cohomology rings
\beq\la{dfr}
H^* \left( M_{Dol} \left( C; r , d\right)  \right) \simeq H^* \left( M_{Dol} \left( C; r,  d p^m \right)  \right).
\eeq
which is a  filtered isomorphism for the  perverse Leray filtrations associated with the  Hitchin morphism
$M_{Dol} (C) \to A(C).$
\end{tm}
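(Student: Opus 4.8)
The plan is to deduce (\ref{dfr}) by composing two transport mechanisms already at our disposal: the $p$-multiplicative periodicity with Frobenius twists (Theorem \ref{int cons}), which trades the curve $C$ for a Frobenius twist while dividing all powers of $p$ out of the degree, and the comparison of Dolbeault moduli of distinct curves of the same genus (Theorem \ref{vvooqq}), which trades the resulting Frobenius twists back for $C$ at no cost in the degree. Concretely, both $M_{Dol}(C;r,d)$ and $M_{Dol}(C;r,dp^m)$ will be routed, cohomologically, through Dolbeault moduli of Frobenius twists of $C$ carrying the prime-to-$p$ part of the degree, and those two intermediate moduli spaces differ only by a Frobenius twist of the curve.

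First I would dispose of the case $m=0$, which is trivial. For $m\ge 1$, the right-hand side of (\ref{dfr}) is a nonsingular Dolbeault moduli space only if ${\rm g.c.d.}(r,dp^m)=1$, so ${\rm g.c.d.}(r,p)=1$. Write $d=d_0p^{a}$ with $a:=v_p(d)\ge 0$ and ${\rm g.c.d.}(d_0,p)=1$; then ${\rm g.c.d.}(r,d_0)=1$, and also ${\rm g.c.d.}(r,d_0p^{a})={\rm g.c.d.}(r,d_0p^{a+m})=1$. Applying Theorem \ref{int cons} (the top row of (\ref{km33}), which is a tautology when the relevant exponent vanishes) to the degrees $d=d_0p^{a}$ and $dp^{m}=d_0p^{a+m}$ yields canonical filtered isomorphisms of cohomology rings
\[
H^*(M_{Dol}(C;r,d),\oql)\ \cong\ H^*(M_{Dol}(C^{(a)};r,d_0),\oql),\qquad
H^*(M_{Dol}(C;r,dp^m),\oql)\ \cong\ H^*(M_{Dol}(C^{(a+m)};r,d_0),\oql).
\]

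Since $C^{(a)}$ and $C^{(a+m)}$ are Frobenius twists of $C$, they share the genus of $C$. Hence Theorem \ref{vvooqq}, applied with the degree $d_0$ (coprime to $r$; recall that result does not require the degree to be a multiple of $p$), provides a non-canonical filtered isomorphism of cohomology rings
\[
H^*(M_{Dol}(C^{(a)};r,d_0),\oql)\ \simeq\ H^*(M_{Dol}(C^{(a+m)};r,d_0),\oql).
\]
Composing the three displayed isomorphisms gives the desired non-canonical isomorphism (\ref{dfr}); it is automatically filtered for the perverse Leray filtrations of the respective Hitchin morphisms, because each isomorphism invoked already is.

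I do not expect a genuine obstacle here: the only point requiring care is the bookkeeping — keeping track of the $p$-adic valuation $a=v_p(d)$ of the degree, using the correct (top) row of (\ref{km33}) so that both targets are \emph{positive} twists $C^{(a)}$ and $C^{(a+m)}$, and checking that the coprimality hypotheses of Theorems \ref{int cons} and \ref{vvooqq} hold at every step (they do, since $d_0$ is coprime to both $r$ and $p$). There is no new geometric input; in particular, the loss of canonicity in (\ref{dfr}) is inherited entirely from Theorem \ref{vvooqq}, which in turn rests on the specialization-in-cohomology statement Proposition \ref{sp cptz}.(2).
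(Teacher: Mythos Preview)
Your proposal is correct and follows essentially the same approach as the paper's one-line proof, which simply reads: ``Combine Theorems \ref{int cons} and \ref{vvooqq}, this latter with $C_1:=C$ and $C_2:= \FC$.'' The only organizational difference is that the paper's hint suggests iterating one Frobenius twist at a time (reducing to $m=1$ and comparing $C$ with $C^{(1)}$), whereas you peel off all powers of $p$ at once and compare $C^{(a)}$ with $C^{(a+m)}$; both routes invoke the same two results and yield the same conclusion.
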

\begin{proof}
Combine 
Theorems \ref{int cons} and    \ref{vvooqq},  this latter with  $C_1:=C$ and $C_2:= \FC$. 
\end{proof}

\subsection{Cohomology ring of Dolbeault moduli spaces for two distinct degrees}\la{co co}$\;$

In this section we prove Theorems \ref{c cop} and \ref{c cop p}.

\begin{tm}
[{\bf  (Same curve, different degrees; $char (\field)=0$)}]
\la{c cop}$\;$

\n
Let $C/\field$ be a smooth curve (\S\ref{rino})  over an algebraically closed field of characteristic zero.  
Fix the positive integer $r$ (the rank). Let $d,d'$ (the degrees) be any two integers coprime with
$r.$ There is a non-canonical ring isomorphism 
\beq\la{p11}
\xymatrix{
H^*(M_{Dol}(C;r,d), \oql)  & \cong & H^*(M_{Dol}(C;r,d'), \oql)
}
\eeq
which is a filtered isomorphism for the perverse Leray filtrations associated with the respective Hitchin morphisms.
\end{tm}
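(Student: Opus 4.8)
The plan is to deduce the characteristic-zero statement (\ref{p11}) from the positive-characteristic $p$-multiplicative periodicity Theorem \ref{e4}, via reduction modulo a carefully chosen prime — exactly as announced in the introduction (``weaker version in positive characteristic; lift and strengthen to characteristic zero''). First I would reduce to a single curve. By the additive periodicity (\ref{gkgk4}), the two sides of (\ref{p11}) depend on $d,d'$ only through their residue classes modulo $r$; and by Theorem \ref{vvooqq} together with the invariance of \'etale cohomology \emph{and} of the perverse Leray filtration of the Hitchin morphism under extension of algebraically closed fields (for the latter one uses that, by coprimality, the Dolbeault moduli space and its Hitchin morphism commute with arbitrary base change, that perverse truncation is stable under extension of an algebraically closed ground field, and that $H^\star$ of a finite-type scheme is unchanged by such an extension), it suffices to prove (\ref{p11}) for \emph{one} conveniently chosen smooth projective curve of the prescribed genus $g$ over \emph{some} algebraically closed field of characteristic zero.

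\textbf{Choice of the arithmetic model.} By Dirichlet's theorem on primes in arithmetic progressions, pick a prime $p$ with $p\equiv d'd^{-1}\pmod{r}$, where $d^{-1}$ denotes an inverse of $d$ modulo $r$; we may also demand $p>r$ and $p\ne\ell$. Then $\mathrm{g.c.d.}(r,p)=1$, hence $\mathrm{g.c.d.}(r,dp)=1$, and $dp\equiv d'\pmod{r}$. Let $V:=W(\overline{\mathbb F}_p)$ be the ring of Witt vectors of $\overline{\mathbb F}_p$, a strictly Henselian discrete valuation ring of mixed characteristic $(0,p)$ with residue field $\overline{\mathbb F}_p$. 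Choose any smooth projective curve $\bar C_0$ of genus $g$ over $\overline{\mathbb F}_p$; since deformations of a smooth projective curve are unobstructed, $\bar C_0$ lifts to a smooth projective curve $\mathcal C/V$ with special fibre $\bar C_0$ and geometric generic fibre $C_0:=\mathcal C_{\bar\eta}$, a smooth curve of genus $g$ over the algebraically closed characteristic-zero field $\overline{\mathrm{Frac}(V)}$. (Alternatively one may spread out $C$ itself over a finitely generated $\mathbb Z$-subalgebra of $\field$ and pass to a suitable trait; the lifting version above is slightly cleaner.) By the first paragraph it is enough to establish (\ref{p11}) for $C_0$.

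\textbf{Assembling the chain.} Since $\mathrm{g.c.d.}(r,d)=\mathrm{g.c.d.}(r,d')=1$ and $p>r$, Proposition \ref{sp cptz}.(2) applies to the family $\mathcal C/V$ and shows that the specialization morphisms
\[
\mathrm{sp}\colon H^*(M_{Dol}(\bar C_0;r,d),\oql)\xrightarrow{\ \sim\ } H^*(M_{Dol}(C_0;r,d),\oql)
\]
and the analogous one for $d'$ are defined and are isomorphisms of cohomology rings, filtered for the perverse Leray filtrations of the respective Hitchin morphisms. In characteristic $p$, Theorem \ref{e4} (with exponent $m=1$) gives a filtered ring isomorphism $H^*(M_{Dol}(\bar C_0;r,d),\oql)\simeq H^*(M_{Dol}(\bar C_0;r,dp),\oql)$, and the additive periodicity (\ref{gkgk4}), applied with $d'=dp+rn$ for the appropriate $n\in\mathbb Z$, gives a filtered ring isomorphism $H^*(M_{Dol}(\bar C_0;r,dp),\oql)\simeq H^*(M_{Dol}(\bar C_0;r,d'),\oql)$. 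Composing these,
\begin{align*}
H^*(M_{Dol}(C_0;r,d),\oql)
&\;\simeq\; H^*(M_{Dol}(\bar C_0;r,d),\oql) \\
&\;\simeq\; H^*(M_{Dol}(\bar C_0;r,d'),\oql) \\
&\;\simeq\; H^*(M_{Dol}(C_0;r,d'),\oql),
\end{align*}
where the first isomorphism is $\mathrm{sp}^{-1}$, the middle one is Theorem \ref{e4} followed by (\ref{gkgk4}), and the last one is $\mathrm{sp}$; all are filtered ring isomorphisms, which yields (\ref{p11}) for $C_0$ and hence Theorem \ref{c cop}.

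\textbf{Where the difficulty lies.} The hard part is not this assembly but the input Proposition \ref{sp cptz}.(2): because the Dolbeault moduli spaces are not proper over their bases, it is not clear a priori that the mixed-characteristic specialization morphism is even defined, let alone a filtered ring isomorphism — this is precisely what is secured by the smoothness of the relative moduli space (Proposition \ref{rel smooth}.(2)), universal corepresentability, and the compactification/specialization package of \ci{de-2021, de-zh B}, and it is exactly here that the hypothesis $p>r$ is consumed. Granting that, the only genuinely new idea at this step is the arithmetic one: Dirichlet's theorem lets ``multiplication of the degree by a single prime $p$'' realize \emph{any} prescribed change of residue class modulo $r$ among units, which is what upgrades the $p$-multiplicative periodicity of Theorem \ref{e4} to the full degree-independence (\ref{p11}). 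One must of course check at each link that both the cup product and the perverse Leray filtration are preserved; for the base-field extension this is smooth base change plus stability of perverse truncation, and for the remaining links it is built into the statements of Theorems \ref{e4}, \ref{vvooqq} and Proposition \ref{sp cptz}.
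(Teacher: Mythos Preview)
Your proposal is correct and follows essentially the same route as the paper's own proof: reduce via Theorem \ref{vvooqq} and base-field extension (the paper invokes the Lefschetz Principle) to a single curve over a convenient characteristic-zero field; choose a prime $p>r$, $p\ne\ell$, via Dirichlet so that $dp\equiv d'\pmod r$; lift a genus-$g$ curve from $\overline{\mathbb F}_p$ to a mixed-characteristic strictly Henselian DVR; apply the specialization isomorphisms of Proposition \ref{sp cptz}.(2); and in characteristic $p$ use Theorem \ref{e4} together with the additive periodicity (\ref{gkgk4}). The only cosmetic differences are that you spell out the base-field-extension compatibility of the perverse Leray filtration where the paper simply says ``Lefschetz Principle,'' and you name the DVR explicitly as $W(\overline{\mathbb F}_p)$.
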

\begin{proof}

Let $a\in \zed$ be such that $da \equiv d' \mod r.$ By the Dirichlet Prime Number Theorem there are infinitely many prime congruent to $a$ modulo $r.$ Choose any such prime $p$ such that $p>r$ and $p \ne \ell$ ($\ell$ as  in $\oql$).

By the $r$-periodicity (\ref{gkgk4}) and the $p$-multiplicativity (\ref{dfr}), the 
statement of the theorem is true if  we replace the characteristic zero algebraically closed ground field, with any algebraically closed ground field of characteristic $p$.

By the Lefschetz Principle, we can replace the given ground field,  by any algebraically closed field of characteristic zero,
such as the forthcoming $\ov{\kappa (\alpha)}$. 
In view of the isomorphisms (\ref{twocurves}), we can also replace the given curve $C$  with any other curve  of the same genus over  $\ov{\kappa (\alpha)}$, such as the forthcoming $X_{\ov{\kappa (\alpha)}}$.

Let $A$ be the spectrum of a complete DVR 
of characteristic zero  with  algebraically closed residue field  $\field$ of characteristic $p.$ The content of this paragraph, namely
that curves in positive characteristic can be lifted to characteristic zero, is standard and well-known.
For example, see \ci[Prop. 2.1]{ob}; see also
\href{https://amathew.wordpress.com/tag/lifting-to-characteristic-zero/}{this post}  (Def. 4 and Thm. 5), and also 
\href{https://amathew.wordpress.com/2011/06/18/lifting-smooth-curves-to-characteristic-zero/#more-2687}{its continuation}. 
There is a smooth curve $X/A$, with closed special  fiber $X_a$
any pre-chosen  integral nonsingular projective curve  of genus $g$  over $\kappa (a)$, and with generic geometric fiber   
$X_{\ov{\kappa (\alpha)}}$
a curve of the same kind, but over the algebraically closed field $\ov{\kappa (\alpha)}$ given by any chosen algebraic closure of the residue 
field $\kappa (\alpha)$ of the generic point  $\alpha \in A.$

By combining the characteristic $p$ version of (\ref{p11}) with Proposition \ref{sp cptz}.(2), we get the following chain of cohomology ring isomorphisms, which are filtered isomorphisms for the respective perverse Leray filtrations
(we drop the rank $r$)
\beq\la{4400}
  H^*(M_{Dol}(X_{\ov{\kappa (\alpha)}}; d)) \cong H^*(M_{Dol}(X_a; d)) \cong 
H^*(M_{Dol}(X_a; d')) \cong  H^*(M_{Dol}(X_{\ov{\kappa (\alpha)}}; d')).
\eeq

The theorem is thus proved.
\end{proof}

Note that in the proof of Theorem \ref{c cop} above, one can  avoid using Proposition \ref{sp cptz}.(2) by spreading out $C$, instead of lifting a chosen $X_a$.
However,  we use the lifting of $X_a$ and Proposition \ref{sp cptz}.(2)  in the proof of Theorem \ref{c cop p} below.

\begin{tm}
[{\bf  (Same curve, different degrees; $char (\field)=p>r$)}]
\la{c cop p}$\;$

\n
Let $(r,d,d')$ be such that ${\rm g.c.d.} (r,d)=g.c.d.(r,d')=1$.
Let $C/\field$ be a smooth curve (\S\ref{rino})  over an algebraically closed field $\field$ of characteristic $p>r$. 
There is a non-canonical ring isomorphism
\beq\la{p12}
\xymatrix{
H^*(M_{Dol}(C,r,d), \oql)  & \cong & H^*(M_{Dol}(C,r,d'), \oql)
}
\eeq
which is a filtered isomorphism for the perverse Leray filtrations associated with the respective Hitchin morphisms.
\end{tm}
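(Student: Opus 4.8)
The plan is to import the characteristic zero result Theorem \ref{c cop} into characteristic $p$ by a lift-and-specialize argument, exactly as announced in the introduction. Concretely: lift $C$ to a smooth curve over a complete DVR of mixed characteristic $(0,p)$, pass to its generic geometric fiber (a curve over an algebraically closed field of characteristic zero), apply Theorem \ref{c cop} there to pass from degree $d$ to degree $d'$, and then specialize back down to $C$. At every stage the relevant map is a filtered isomorphism of cohomology rings; the hypothesis $p>r$ enters only through Proposition \ref{sp cptz}.(2), which guarantees that the specialization morphism over a mixed-characteristic DVR is defined and is such a filtered isomorphism.

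First I would choose, as in the proof of Theorem \ref{c cop}, a complete DVR $A$ of characteristic zero whose (algebraically closed) residue field is $\field$, together with a smooth curve $X/A$ (in the sense of \S\ref{rino}) whose special fiber $X_a$ is isomorphic to $C$; such a lifting exists by the standard facts cited in the proof of Theorem \ref{c cop} (e.g. \ci[Prop. 2.1]{ob}), and in view of Theorem \ref{vvooqq} one could in fact take $X_a$ to be any integral nonsingular projective curve of the same genus. Let $\alpha \in A$ be the generic point, let $\ov{\kappa(\alpha)}$ be an algebraic closure of its residue field, and let $X_{\ov{\kappa(\alpha)}}$ be the corresponding generic geometric fiber; it is a smooth projective curve over the algebraically closed field $\ov{\kappa(\alpha)}$ of characteristic zero.

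Then I would assemble the following chain of cohomology ring isomorphisms, each of which is a filtered isomorphism for the perverse Leray filtrations induced by the respective Hitchin morphisms (the rank $r$ is fixed and dropped from the notation):
\[
H^*(M_{Dol}(C; d), \oql) \;\cong\; H^*(M_{Dol}(X_{\ov{\kappa(\alpha)}}; d), \oql) \;\cong\; H^*(M_{Dol}(X_{\ov{\kappa(\alpha)}}; d'), \oql) \;\cong\; H^*(M_{Dol}(C; d'), \oql).
\]
The outer two isomorphisms are the specialization isomorphisms of Proposition \ref{sp cptz}.(2) applied to $X/A$ after passing to the strict Henselization at a geometric point over $a$ (its mixed characteristic is $(0,p)$ with $p>r$, and ${\rm g.c.d.}(r,d)={\rm g.c.d.}(r,d')=1$, so the hypotheses of that Proposition hold), together with the identification $X_a\cong C$; the middle isomorphism is Theorem \ref{c cop} applied to $X_{\ov{\kappa(\alpha)}}$ over the characteristic-zero algebraically closed field $\ov{\kappa(\alpha)}$. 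Composing yields the desired filtered ring isomorphism (\ref{p12}).

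The only genuine subtlety — and the step I would treat most carefully — is the applicability of Proposition \ref{sp cptz}.(2): one must reduce to the situation of (\ref{r575}) by base changing $A$ to its strict Henselization, and one must invoke the smoothness of the Dolbeault moduli space over the base (Proposition \ref{rel smooth}.(2)) together with universal corepresentability in the coprime case, so that the vanishing-cycle vanishing and the compactification/specialization package of \ci{de-2021}, \ci[\S5.1]{de-zh B} apply; this is exactly where the assumption $p>r$ is consumed. Everything else is formal bookkeeping with filtered morphisms of cohomology rings.
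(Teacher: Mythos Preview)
Your proposal is correct and follows essentially the same approach as the paper's own proof: lift $C$ to a smooth curve $X/A$ over a complete DVR of mixed characteristic as in the proof of Theorem~\ref{c cop}, apply Theorem~\ref{c cop} on the generic geometric fiber, and transport the result to the special fiber $C$ via the specialization isomorphism (\ref{r575}) of Proposition~\ref{sp cptz}.(2), which is where the hypothesis $p>r$ is used. Your write-up is in fact more detailed than the paper's two-line proof; the only superfluous step is the passage to a strict Henselization, since a complete DVR with algebraically closed residue field is already strictly Henselian.
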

\begin{proof}
Let $X/A$ be a lift of $C$ to characteristic zero as in the proof of Theorem (\ref{c cop}).
The desired conclusion in positive characteristic $p$ follows 
by combining  the analogous result  (\ref{p11}) in characteristic zero, with the specialization isomorphism  (\ref{r575}).
\end{proof}

Note that Theorem \ref{c cop p} does not follow immediately by combining 
the $p$-multiplicativity (\ref{p-per-2}) with the elementary periodicity (\ref{gkgk4}) with respect to the rank. For example, take $p=3, r=13, d'=1, d=15$.

\begin{rmk}\la{491}
One can combine the results of Theorem \ref{vvooqq}, with the ones of Theorems \ref{c cop}, \ref{c cop p},
and obtain  the evident  ``different curves, different degrees" version (omitted).
\end{rmk}

\begin{rmk}[{\bf (Earlier results)}]\la{wiskn}$\;$

\ben
\item
Point counts over finite fields, coupled with smoothness and purity arguments, give an equality of  Betti numbers for the two sides of (\ref{p11})
and (\ref{p12})
over an algebraically closed ground field;  see \ci{gwz, me, mo-sc, sc}.
While such methods imply the existence of an additive isomorphism preserving the perverse filtration, they   do not seem to yield information on cup products.
 
 \item
Let the ground field be the complex numbers.
If we replace $M_{Dol}$ with the Betti moduli space $M_B$, then a well-known Galois-conjugation method yields
a canonical isomorphism  of cohomology rings analogous to (\ref{p11}). By the Non Abelian  Hodge Theory for 
${\rm g.c.d.}(r,d)=1$
over the complex numbers  (\ci{ha-th-PLMS}),
we have cohomology ring  isomorphisms $H^*(M_{B}) \simeq H^*(M_{Dol})$, so that we obtain a canonical cohomology ring isomorphism
as in (\ref{p11}), but different from it. We are unaware of 
an evident reason why this canonical isomorphism should be compatible with the perverse filtration, the way   (\ref{p11}) is. Added in revision: this issue is settled in the positive in \ci{dmsz}.

\item
Over a ground field of positive characteristic, given the lack of a Betti moduli space counterpart,  the  existence of a multiplicative  (\ref{p12})  is new, and   so  is its compatibility with the perverse filtrations associated with the Hitchin morphisms.
\een
\end{rmk}


\begin{thebibliography}{10}


\bibitem[Be-Na-Ra]{be-na-ra}{A. Beauville, M.S.  Narasimhan,  S. Ramanan, Spectral curves and the generalised theta divisor. J. Reine Angew. Math. 398 (1989), 169-179.}

\bibitem[Be]{beilinson}{A. Beilinson, How to glue perverse sheaves, In K-theory, arithmetic and geometry, pp. 42-51. Springer, Berlin, Heidelberg, 1987.}

\bibitem[BBDG]{bbdg}{A. Beilinson, A. Bernstein, P. Deligne, O. Gabber, Faisceaux pervers. (French) [Perverse sheaves] Analysis and topology on singular spaces, I (Luminy, 1981), 5-171, Ast\'erisque, 100, Soc. Math. France, Paris, 1982.}


\bibitem[Bi-Su]{bi-su}{I. Biswas, S Subramanian, 
Vector bundles on curves admitting a connection, 
Quart. J. Math. 57 (2006).}

\bibitem[Ch-Zh]{ch-zh}{ T.-H. Chen, X. Zhu, Non-Abelian Hodge Theory for algebraic curves in characteristic p,
Geom. Funct. Anal. Vol. 25 (2015) 1706-1733.}

\bibitem[Co]{conrad}{B. Conrad, Deligne's notes on Nagata compactifications. J. Ramanujan Math. Soc. 22 (2007), no. 3, 205-257.}



\bibitem[de-II]{de-2021}{M.A. de Cataldo, Perverse Leray Filtration and Specialization with Applications to the Hitchin Morphism, to appear in Math. Proc. Cambridge Phil. Soc. (2021)}



\bibitem[de-Ha-Li]{de-ha-li}{M.A. de Cataldo, T. Haines, L. Li, 
Frobenius semisimplicity for convolution morphisms. Math. Z. 289 (2018), no. 1-2, 119-169.
}

\bibitem[de-Ma]{dCM} M. A. de Cataldo and D. Maulik,  The perverse filtration for the Hitchin fibration is locally constant, arXiv:1808.02235.

\bibitem[de-Ma-Sh-Zh]{dmsz}{M.A. de Cataldo, D. Maulik, J. Shen, S. Zhang,
Cohomology of the moduli of Higgs bundles via positive characteristic,
\href{https://arxiv.org/abs/2105.03043}{arXiv: 2105.03043}, to appear in J.E.M.S.}

\bibitem[de-Mi]{bams}{M.A. de Cataldo, L. Migliorini, 
The decomposition theorem, perverse sheaves and the topology of algebraic maps, Bull. Amer. Math. Soc. (N.S.) 46 (2009), no. 4, 535-633.}

\bibitem[de-Mi-Mu]{de-mi-mu}{M.A. de Cataldo, L. Migliorini, M. Musta\c{t}\u{a}, Combinatorics and topology of proper toric maps,
J. reine angew. Math. {\bf 744} (2018), 133-163.}

\bibitem[de-Zh]{de-zh B}{M.A. de Cataldo, S. Zhang, Projective completion of moduli of $t$-connections on curves in positive characteristic, \href{https://arxiv.org/abs/2104.12209}{arXiv: 2104.12209}.}

\bibitem[De]{sga72}{A. Grothendieck et. al, SGA 7.I, {\em Groupes de Monodromie en G\'eom\'etrie
Alg\'ebrique,} Lecture Notes in Math. 288, Springer-Verlag, Heidelberg (1972).} 

\bibitem[De2]{weil2}{P. Deligne, La conjecture de Weil. II. Inst. Hautes \'Etudes Sci. Publ. Math. No. 52 (1980), 137-252.}

\bibitem[De-Mu]{dm}{P. Deligne, D. Mumford,
"The irreducibility of the space of curves of given genus." \textit{Publications Math\'{e}matiques de l'Institut des Hautes \'{E}tudes Scientifiques} 36.1 (1969): 75-109.}

\bibitem[De-Ra]{dr73}{P. Deligne, M. Rapoport. "Les sch\'{e}mas de modules de courbes elliptiques." \textit{Modular functions of one variable II}. Springer, Berlin, Heidelberg, (1973). 143-316.}


\bibitem[Ek]{ek}{T. Ekedahl, On the adic formalism. The Grothendieck Festschrift, Vol. II, 197-218, Progr. Math., 87, Birkh\"auser Boston, Boston, MA, 1990.}


 \bibitem[Fa]{fa}{G. Faltings, Stable G-bundles and projective connections. J. Algebraic Geom. 2 (1993), no. 3, 507-568.
 }




\bibitem[Gr]{gr-2016}{M. Groechenig, 
Moduli of connection in positive characteristic. Math. Res. Lett. 23 (2016), no. 4, 989-1047.}

\bibitem[Gr-Wy-Zi]{gwz}{M. Groechenig, D. Wyss, P. Ziegler,  Mirror symmetry for moduli spaces of Higgs bundles via p-adic integration. Invent. Math. 221 (2020), no. 2, 505-596.}


\bibitem[EGA 4.3]{EGA4.3}{A. Grothendieck. \'El\'ements de g\'eom\'etrie alg\'ebrique. IV. \'Etude locale des sch\'emas et des morphismes de sch\'emas. III. Inst. Hautes \'Etudes Sci. Publ. Math. No. 28 (1966).}

\bibitem[Ha]{ha}{R. Hartshorne, \textit{Algebraic Geometry}, Graduate Texts in Mathematics, No. 52. Springer-Verlag, New York-Heidelberg, 1977.}

\bibitem[Ha-Th]{ha-th-PLMS}{T. Hausel, M. Thaddeus, Generators for the cohomology ring of the moduli space of rank 2 Higgs bundles. Proc. London Math. Soc. (3) 88 (2004), no. 3, 632-658.
}

\bibitem[He]{he2015}{J. Heinloth, Jochen,
A conjecture of Hausel on the moduli space of Higgs bundles on a curve. 
Ast\'erisque No. 370 (2015), 157-175.
}

\bibitem[Ho-Le]{ho-le}{V. Hoskins, S. P. Lehalleur, On the Voevodsky motive of themoduli space of Higgs bundes on a curve, to appear in The Quarterly Journal of Mathematics, arXiv:1910.04440v1.}

\bibitem[Hu-Le]{hu-le}{D. Huybrechts, M. Lehn,
\textit{The geometry of moduli spaces of sheaves},  Aspects of Mathematics, E31. Friedr. Vieweg $\&$ Sohn, Braunschweig, 1997.
}


\bibitem[Il]{illusie}{L. Illusie, 
Autour du th\'eor\`eme de monodromie locale,  
in {\em P\'eriodes p-adiques} (Bures-sur-Yvette, 1988). Ast\'erisque No. 223 (1994), 9-57.}




\bibitem[KK]{kk}{T. Kinjo, N. Koseki, Cohomological $\chi$-independence for Higgs bundles and Gopakumar-Vafa Invariants, \href{https://arxiv.org/pdf/2112.10053.pdf}{arXiv: 2112.10053}.}

\bibitem[Kl]{kleiman}{S. Kleiman,
"The Picard Scheme", \textit{Fundamental Algebraic Geometry. Grothendieck's FGA explained}, Mathematical Surveys and Monographs 123, Amer. Math. Soc. (2005), 235-321}


\bibitem[La-Sh-Zu]{la-sh-zu}{G. Lan, M. Sheng, K. Zuo, Semistable Higgs bundles, periodic Higgs bundles and representations of algebraic fundamental groups, Journal of the European Mathematical Society 21, no. 10 (2019): 3053-3112.}

\bibitem[La]{la-2021}{A. Langer, Moduli spaces of semistable modules over Lie algebroids, \href{https://arxiv.org/abs/2107.03128}{arXiv: 2107.03128}.}

\bibitem[La2]{la-2014}{A. Langer, Semistable modules over Lie algebroids in positive characteristic, Documenta Mathematica 19 (2014) 509-540.
}




\bibitem[La-Pa]{la-pa}{Y. Laszlo, C. Pauly, On the Hitchin morphism in positive characteristic. Internat. Math. Res. Notices 2001, no. 3, 129-143.}


\bibitem[Me]{me}{A. Mellit, Poincar\'e polynomials of moduli spaces of Higgs bundles and character varieties (no punctures). Invent. Math. 221 (2020), no. 1, 301-327.}


\bibitem[Mo-Sc]{mo-sc}{S. Mozgovoy, O. Schiffmann,  Counting Higgs bundles and type A quiver bundles. Compos. Math. 156 (2020), no. 4, 744-769.}


\bibitem[Ni]{nitsure}{N. Nitsure, Moduli space of semistable pairs on a curve. Proc. London Math. Soc. (3) 62 (1991), no. 2, 275-300.}


\bibitem[Ob]{ob}{A. Obus, The (local) lifting problem for curves, Adv. Stud. Pure Math. 63, 2012, Galois-Teichm\"uller Theory and Arithmetic Geometry, pp.359-412.}

\bibitem[Og-Vo]{og-vo}{A. Ogus, V. Vologodsky, Nonabelian Hodge theory in characteristic p. 
Publ. Math. Inst. Hautes \'Etudes Sci. No. 106 (2007), 1-138.}


\bibitem[Sc]{sc}{O. Schiffmann, Indecomposable vector bundles and stable Higgs bundles over smooth projective curves. Ann. of Math. (2) 183 (2016), no. 1, 297-362.}

\bibitem[Si-I]{si mo I}{C. Simpson, Moduli of representations of the fundamental group of a smooth projective variety. I. Inst. Hautes \'Etudes Sci. Publ. Math. No. 79 (1994), 47-129.}

\bibitem[Si-II]{si mo II}{C. Simpson, Moduli of representations of the fundamental group of a smooth projective variety. II. Inst. Hautes \'Etudes Sci. Publ. Math. No. 80 (1994), 5-79 (1995).}

\bibitem[Si-III]{si naf}{C. Simpson,
The Hodge filtration on nonabelian cohomology. Algebraic geometry, Santa Cruz 1995, 217-281, Proc. Sympos. Pure Math., 62, Part 2, Amer. Math. Soc., Providence, RI, 1997.}

\bibitem[StPr]{stacks-project}{Stacks Project}

\end{thebibliography}
\end{document}